\theoremstyle{plain} 
\newtheorem{theorem}{Theorem}[section]
\newtheorem{lemma}[theorem]{Lemma}
\theoremstyle{definition}
\newtheorem{definition}[theorem]{Definition}
\newtheorem{remark}[theorem]{Remark}
\DeclareSymbolFont{tipa}{T3}{cmr}{m}{n}
\DeclareMathAccent{\invbreve}{\mathalpha}{tipa}{16}
\newsavebox{\@brx}
\newcommand{\llangle}[1][]{\savebox{\@brx}{\(\m@th{#1\langle}\)}%
  \mathopen{\copy\@brx\mkern2mu\kern-0.9\wd\@brx\usebox{\@brx}}}
\newcommand{\rrangle}[1][]{\savebox{\@brx}{\(\m@th{#1\rangle}\)}%
  \mathclose{\copy\@brx\mkern2mu\kern-0.9\wd\@brx\usebox{\@brx}}}
\title{Basis for KBSM of fibered torus with multiplicity two exceptional fiber}
\author{Mieczyslaw K. Dabkowski}
\address{Department of Mathematical Sciences, The University of Texas at Dallas, Richardson, TX 75080}
\email{mdab@utdallas.edu}
\author{Cheyu Wu}
\address{Department of Mathematical Sciences, The University of Texas at Dallas, Richardson, TX 75080}
\email{cheyu.wu@utdallas.edu}
\begin{document}

\begin{abstract}
We construct a family of bases for the Kauffman bracket skein module (KBSM) of the product of an annulus and a circle. Using these bases, we find a new basis for the KBSM of $(\beta,2)$-fibered torus as a first step toward developing techniques for computing KBSM of a family of small Seifert fibered $3$-manifolds.
\end{abstract}

\maketitle

\section{Introduction}
\label{s:intro}

Skein modules were first introduced by J.H.~Przytycki in 1987 \cite{Prz1991} and later independently by V.G.~Turaev in 1988 \cite{Tur1990}. Among these, the skein module based on the Kauffman bracket skein relation in $S^{3}$ (see \cite{KLH1987}) has been the most extensively studied and we call it \emph{Kauffman bracket skein module} (KBSM). The main objective for this paper is to construct a basis for KBSM of $(\beta,2)$-fibered torus\footnote{A fibered torus is homeomorphic to a solid torus $V^{3}$ so, in this paper, we constructed a new basis for KBSM of $V^{3}$ (see also Remark~\ref{rem:Final_Rem_Fib_Torus}).} and, in the follow-up work, we plan to use this basis to compute KBSM for families of lens spaces and a family of small Seifert fibered $3$-manifolds.

A framed link in an oriented $3$-manifold $M$ is a disjoint union of smoothly embedded circles equipped with a non-zero normal vector field. Let $R$ be a commutative ring with identity. We fix an invertible element $A \in R$ and denote by $R\mathcal{L}^{fr}$ the free $R$ module with basis $\mathcal{L}^{fr}$ consisting of ambient isotopy classes of framed links in $M$ (also including the empty set as a framed link). Let $S_{2,\infty}$ be the submodule of $R\mathcal{L}^{fr}$ generated by
\begin{equation*}
L_{+} - AL_{0} - A^{-1}L_{\infty} \quad \text{and} \quad L \sqcup T_{1} + (A^{-2}+A^{2})L,
\end{equation*}
where framed links $L_{+},\, L_{0},\, L_{\infty}$ are identical outside of a $3$-ball and differ inside of it as shown on the left of Figure~\ref{fig:skeinrelation}; $L\sqcup T_{1}$ on the right of Figure~\ref{fig:skeinrelation} is the disjoint union of $L$ and the framed trivial knot $T_{1}$ (i.e., $T_{1}$ is contained in a $3$-ball disjoint from $L$). The \emph{Kauffman bracket skein module} (KBSM) of $M$ is defined as the quotient module of $R\mathcal{L}^{fr}$ by $S_{2,\infty}$, i.e.,
\begin{equation*}
\mathcal{S}_{2,\infty}(M;R,A) = R\mathcal{L}^{fr}/S_{2,\infty}.
\end{equation*}

\begin{figure}[ht]
\centering
\includegraphics[scale=0.6]{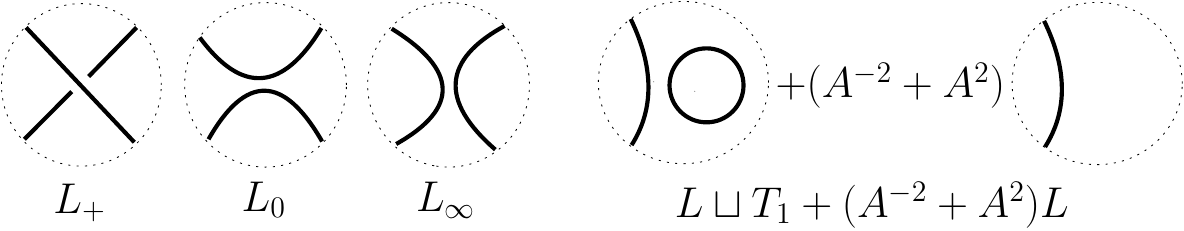}
\caption{Skein triple $L_{+}$, $L_{0}$, $L_{\infty}$ and relation $L\sqcup T_{1} + (A^{-2}+A^{2})L$}
\label{fig:skeinrelation}
\end{figure}

We organize this paper as follows. In Section~\ref{s:ambient_isotopy_In_Fibered_torus}, we discuss a model for a $(\beta,2)$-fibered torus. Using this model, we represent framed links and describe their isotopy by arrow diagrams and arrow moves on $2$-disk with a mark point (see Theorem~\ref{thm:AmbientIsotopiesInVBeta2}). In Section~\ref{s:basis_A2TimesS1}, we construct a family of bases for KBSM of the product of an annulus and a circle. In Section~\ref{s:basis_V} we find a basis for the $(\beta,2)$-fibered torus in terms of arrow diagrams in a $2$-disk with a marked point.

\section{Isotopy of framed links in \texorpdfstring{$(\beta,2)$}{(\unichar{0946},2)}-fibered torus}

\label{s:ambient_isotopy_In_Fibered_torus}

By a $(\beta,2)$-fibered torus $V(\beta,2)$ we mean a $3$-manifold obtained by $(2,\beta)$-Dehn filling of the inner boundary torus of ${\bf A}^{2} \times S^{1}$ (i.e., the product of an annulus ${\bf A}^{2}$ and a circle $S^{1}$). It follows from Corollary~6.3 of \cite{Hud1969} that ambient isotopy of links (framed links) in $V(\beta,2)$ is a composition of a finite number of \emph{moves}. We will always assume that a link (also a framed link) $L$ in $V(\beta,2)$ is inside ${\bf A}^{2}\times S^{1}$. We say that a link $L$ is \emph{generic} if (i) $L$ is transversal to fibers of the natural projection $p: {\bf A}^{2}\times S^{1} \to {\bf A}^{2}$; (ii) $L$ intersects ${\bf A}^{2}$ transversally at finitely many points, and (iii) immersed curves $p(L_{i})$ in ${\bf A}^{2}$ intersect transversally, where $L_{i}$ are components of $L$. Using general position arguments, one shows that every link in $V(\beta,2)$ is isotopic to a generic link. Such a link can be assigned a standard framing by choosing the unit vector field tangent to fibers of $p$ and co-oriented with the fibers. A framed link with standard framing is called a \emph{generic framed link} and we note that each framed link in $V(\beta,2)$ is ambient isotopic to such a link. Generic framed links can be described on ${\bf A}^{2}$ by \emph{arrow diagrams} (see Figure~\ref{fig:projectiondiagram_2}) introduced in \cite{MD2009}. 

\begin{figure}[ht]
\centering
\includegraphics[scale=0.5]{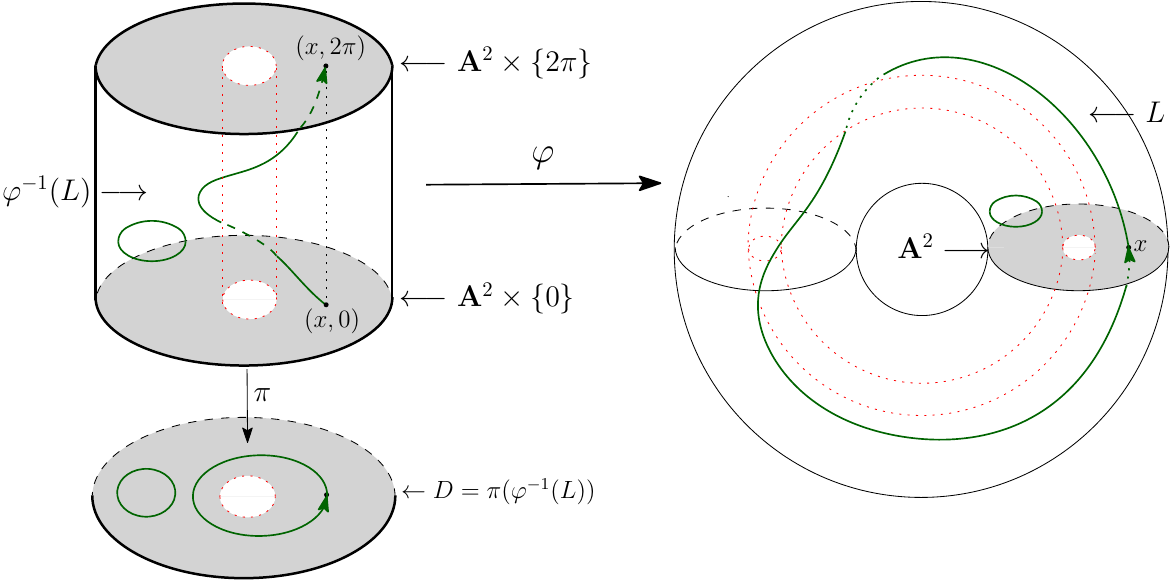}
\caption{Arrow diagram of a link in ${\bf A}^{2}\times S^{1}$}
\label{fig:projectiondiagram_2}
\end{figure}

The ambient isotopies of the generic framed links in ${\bf A}^{2} \times S^{1}$ can be described by \emph{arrow moves} on their arrow diagrams in ${\bf A}^{2}$ (see Figure~\ref{fig:ArrowMoves}). 
\begin{figure}[ht]
\centering
\includegraphics[scale=0.8]{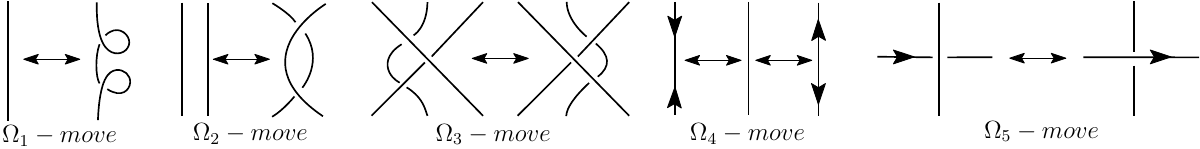}
\caption{Arrow moves $\Omega_{1}-\Omega_{5}$}
\label{fig:ArrowMoves}
\end{figure}
As observed in \cite{MD2009}, the arrow diagrams of generic framed links up to arrow moves $\Omega_{1}-\Omega_{5}$ on ${\bf A}^{2}$ are in bijection with ambient isotopy classes of framed links\footnote{In fact, this is true for any oriented surface ${\bf F}$ (see Theorem~1 of \cite{GM2017}). For $M = F \times S^{1}$, where $F$ is a planar surface, Theorem~1 is a consequence of Theorem~3.2 of \cite{Tur1992}, i.e., descriptions of generic framed links and their isotopies in $M$ by shadow diagrams with shadow moves and by arrow diagrams with arrow moves are equivalent. For a non-planar oriented surface ${\bf F}\neq S^{2}$, Theorem~3.2 of \cite{Tur1992} does not imply Theorem~1.} in ${\bf A}^{2}\times S^{1}$.

We simplify arrow diagrams by applying the $\Omega_{4}$-move to each arc, ensuring that all arrows on a given arc point in the same direction. Following this convention, an arrow labeled with an integer $n$ on an arc $\alpha$ (see Figure~\ref{fig:Reduced_Arrow_Diagrams}) represents $|n|$ arrows on $\alpha$, pointing in the same direction if $n \geq 0$ and in the opposite direction if $n < 0$.

\begin{figure}[H]
\centering
\includegraphics[scale=0.7]{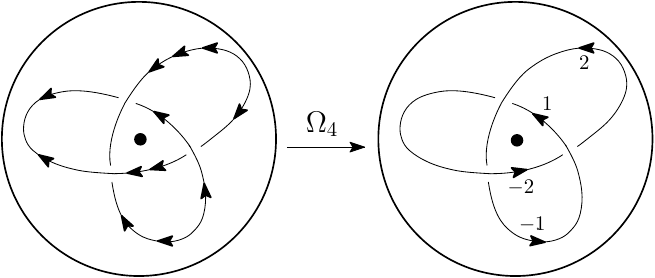}
\caption{Simplified arrow diagram}
\label{fig:Reduced_Arrow_Diagrams}
\end{figure}

There are two types of moves in $V(\beta,2)$ that generate all the ambient isotopes of generic framed links. Moves of the first type occur in normal cylinders ${\bf D}^{2}\times I$ inside ${\bf A}^{2}\times S^{1}$ and of the second type, called \emph{$2$-handle slide}, occur in a normal cylinder attached along the $(2,\beta)$-curve to the inner boundary torus ${\bf T}^{2}$ of ${\bf A}^{2}\times S^{1}$ (see Figure~\ref{fig:HandleSlidingS5}). 

\begin{figure}[H]
\centering
\includegraphics[scale=0.6]{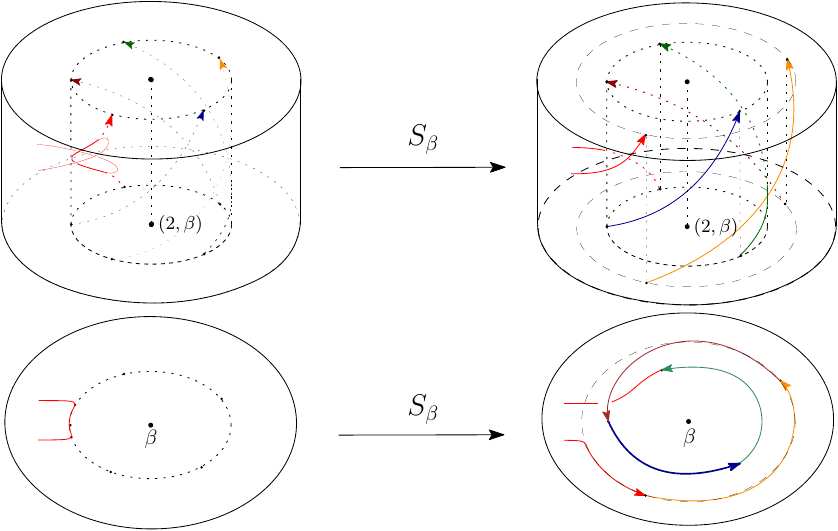}
\caption{$S_{\beta}$-move on ${\bf D}^{2}_{\beta}$ with $\beta = 5$}
\label{fig:HandleSlidingS5}
\end{figure}

We will represent the $2$-handle slide as a move $S_{\beta}$ between arrow diagrams in a closed $2$-disk ${\bf D}^{2}_{\beta}$ with a marked point $\beta$ (see Figure~\ref{fig:DiskWithMarkedPointHatD2}).

\begin{figure}[H]
\centering
\includegraphics[scale=0.6]{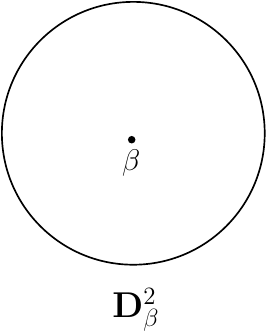}
\caption{${\bf D}^{2}$ with a marked point $\beta$}
\label{fig:DiskWithMarkedPointHatD2}
\end{figure}

\begin{lemma}
\label{lem:SbetaMoveOnD2Beta}
Generic framed links in $V(\beta,2)$ are related by a $2$-handle slide if and only if their arrow diagrams in ${\bf D}^{2}_{\beta}$ differ by a $S_{\beta}$-move shown in Figure~\ref{fig:ArrowDiagramForHandleSlidingSBeta}.
\end{lemma}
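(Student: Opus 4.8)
The plan is to establish the lemma as a geometric translation statement: it asserts that the abstract topological operation of a $2$-handle slide across the $(2,\beta)$-curve corresponds exactly to a combinatorial $S_{\beta}$-move on arrow diagrams drawn in the marked disk $\mathbf{D}^{2}_{\beta}$. Since we already know from the cited work of \cite{MD2009} and \cite{GM2017} that generic framed links in $\mathbf{A}^{2}\times S^{1}$ are faithfully encoded by arrow diagrams on $\mathbf{A}^{2}$ up to the moves $\Omega_{1}$--$\Omega_{5}$, and since the $2$-handle slide is one of the two generating move-types for isotopy in $V(\beta,2)$, the content of the lemma is really a local computation near the inner boundary torus $\mathbf{T}^{2}$. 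I would therefore set up the proof as a careful bookkeeping of what the $2$-handle attachment does to a strand as it is pushed across the attaching annulus.

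First I would fix an explicit model for the normal cylinder attached along the $(2,\beta)$-curve, recording how the meridian disk of the filling solid torus sits relative to the fibration $p\colon \mathbf{A}^{2}\times S^{1}\to\mathbf{A}^{2}$. The key point is that the $(2,\beta)$-curve wraps twice in the longitudinal ($S^{1}$) direction and $\beta$ times meridionally, so a strand that is slid over the attached $2$-handle re-enters the annulus after traversing the attaching region; the multiplicity two of the exceptional fiber forces the strand to interact with the marked disk in a two-fold manner. I would track a single arc of the diagram through the slide, identifying how the $S^{1}$-coordinate advances and hence how many arrows (and of which sign, per the $\Omega_{4}$ convention for simplified diagrams introduced above) are deposited along the arc. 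This is where the integer $\beta$ and the marked point enter: the number $\beta$ governs the count of arrows produced when the strand crosses the marked point, matching the $S_{\beta}$-move depicted in Figure~\ref{fig:ArrowDiagramForHandleSlidingSBeta}.

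The proof then splits into the two directions of the biconditional. For the forward direction I would show that performing a $2$-handle slide in the $3$-manifold, after projecting and putting the result back into generic position with standard framing, yields precisely an $S_{\beta}$-move on the arrow diagram; this amounts to reading off the arrow data produced by the geometric picture in Figure~\ref{fig:HandleSlidingS5}. For the converse I would argue that any $S_{\beta}$-move can be realized by an isotopy that pushes the relevant arc through the filling solid torus and back, so that diagrams differing by an $S_{\beta}$-move represent links related by a $2$-handle slide. Throughout, I would rely on the established equivalence between arrow diagrams (up to $\Omega_{1}$--$\Omega_{5}$) and ambient isotopy classes to pass freely between the $3$-dimensional picture and the diagrammatic one.

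I expect the main obstacle to be the careful verification that the framing and the arrow count are preserved correctly through the slide, rather than any deep topological difficulty. In particular, the subtle part is accounting for the multiplicity two of the exceptional fiber: a strand slid across the handle interacts with the $(2,\beta)$-curve in a way that must be reconciled with the single marked point $\beta$ on $\mathbf{D}^{2}_{\beta}$, and I must confirm that the sign and number of arrows deposited agree with the normalization conventions fixed earlier (all arrows on an arc pointing the same way, labeled by a signed integer). Making the local model fully explicit and checking that no extra arrows or framing corrections are introduced is the step that demands the most care, and I would devote the bulk of the argument to pinning down that local picture.
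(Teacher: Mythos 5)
Your proposal correctly identifies what the lemma is about and outlines the natural strategy: localize the analysis to the attaching region of the $(2,\beta)$-filling, track the $S^{1}$-coordinate of a strand pushed across the meridian disk to count the arrows deposited, and handle the two directions of the biconditional via the established correspondence between arrow diagrams modulo $\Omega_{1}$--$\Omega_{5}$ and framed links in ${\bf A}^{2}\times S^{1}$. You should know that the paper itself offers no proof of this lemma at all: it is asserted immediately after the geometric picture of the $2$-handle slide (Figure~\ref{fig:HandleSlidingS5}) and the diagrammatic move (Figure~\ref{fig:ArrowDiagramForHandleSlidingSBeta}), those two figures jointly serving as the justification. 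So there is no written argument to compare against, and your outline is consistent in spirit with what the authors evidently intend.

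As a proof, however, your proposal has a genuine gap, and it is exactly the one you flag yourself: every step that would constitute the mathematical content is deferred rather than carried out. You never fix the promised local model of the normal cylinder attached along the $(2,\beta)$-curve, never perform the arrow count showing that a strand slid over the handle re-enters the diagram wrapping twice around the marked point with a total of $\beta$ new arrows distributed between the two passes exactly as Figure~\ref{fig:ArrowDiagramForHandleSlidingSBeta} prescribes (this precise distribution, tied to $\nu=\lfloor \beta/2\rfloor$, is what the algebra of Section~\ref{s:basis_V} later depends on, so getting ``a move of roughly this shape'' is not enough), and never verify that the standard framing survives the slide without introducing $\Omega_{1}$-type correction terms. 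The converse direction also needs more than you give it: to realize an arbitrary $S_{\beta}$-move by a slide you must check that the isotopy through the filling solid torus can be chosen to keep the remainder of the link fixed and generic, a general-position argument you mention but do not make. In short, what you have written is a plan for a proof; the decisive local computation that would single out the correct $S_{\beta}$-move from any other candidate is named but never executed.
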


\begin{figure}[H]
\centering
\includegraphics[scale=0.8]{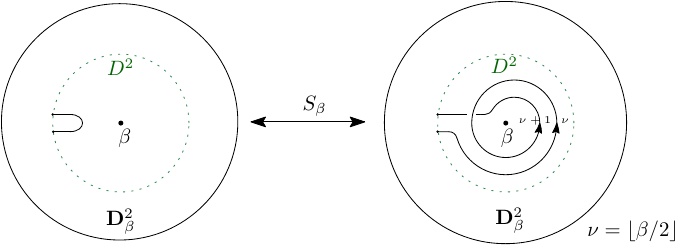}
\caption{$S_{\beta}$-move on ${\bf D}^{2}_{\beta}$}
\label{fig:ArrowDiagramForHandleSlidingSBeta}
\end{figure}

Consequently, an ambient isotopy of framed links in $V(\beta,2)$ can be described as follows.

\begin{theorem}
\label{thm:AmbientIsotopiesInVBeta2}
Generic framed links in $V(\beta,2)$ are ambient isotopic if and only if their arrow diagrams in ${\bf D}^{2}_{\beta}$ are related by a finite sequence of $\Omega_{1}-\Omega_{5}$ (see Figure~\ref{fig:ArrowMoves}) and $S_{\beta}$-moves (see Figure~\ref{fig:ArrowDiagramForHandleSlidingSBeta}).
\end{theorem}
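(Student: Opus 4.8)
The plan is to obtain the theorem by assembling three facts already at our disposal: the bijection between arrow diagrams in ${\bf A}^{2}$ up to $\Omega_{1}-\Omega_{5}$ and ambient isotopy classes of framed links in ${\bf A}^{2}\times S^{1}$ (from \cite{MD2009}, see also \cite{GM2017} and \cite{Tur1992}); Lemma~\ref{lem:SbetaMoveOnD2Beta}, which identifies the $2$-handle slide with the $S_{\beta}$-move on ${\bf D}^{2}_{\beta}$; and Corollary~6.3 of \cite{Hud1969}, by which every ambient isotopy in $V(\beta,2)$ factors as a finite composition of the two elementary move types described above (moves supported in a normal cylinder ${\bf D}^{2}\times I$ inside ${\bf A}^{2}\times S^{1}$, and $2$-handle slides across the normal cylinder attached along the $(2,\beta)$-curve). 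Throughout I identify arrow diagrams in ${\bf A}^{2}$ with arrow diagrams in the punctured disk ${\bf D}^{2}_{\beta}\setminus\{\beta\}$, the marked point $\beta$ being the image of the inner boundary of the annulus.

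For the \emph{if} direction I would verify that each generating move is realized by an ambient isotopy of $V(\beta,2)$. Every $\Omega_{i}$-move is an ambient isotopy already supported in ${\bf A}^{2}\times S^{1}$, hence a fortiori an ambient isotopy in the larger manifold $V(\beta,2)\supset {\bf A}^{2}\times S^{1}$. Every $S_{\beta}$-move realizes a $2$-handle slide by Lemma~\ref{lem:SbetaMoveOnD2Beta}, and such a slide is an ambient isotopy in $V(\beta,2)$, since it pushes a strand across the meridian disk of the solid torus glued in during the $(2,\beta)$-filling. Concatenating finitely many such isotopies yields an ambient isotopy between the two links, as required.

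For the \emph{only if} direction, suppose two generic framed links $L$ and $L'$ are ambient isotopic in $V(\beta,2)$. By Corollary~6.3 of \cite{Hud1969} the isotopy factors as a finite sequence of elementary moves, each of which is either (type~1) supported in a normal cylinder ${\bf D}^{2}\times I\subset {\bf A}^{2}\times S^{1}$, or (type~2) a $2$-handle slide. After a preliminary generic perturbation, absorbed into $\Omega_{1}-\Omega_{5}$, I would arrange that the link is generic before and after each elementary move. Each type~1 move is an ambient isotopy within ${\bf A}^{2}\times S^{1}$ that misses the marked point $\beta$; by the cited bijection it is recorded on the arrow diagram by a finite sequence of $\Omega_{1}-\Omega_{5}$-moves in ${\bf D}^{2}_{\beta}\setminus\{\beta\}$. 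Each type~2 move is recorded by a single $S_{\beta}$-move via Lemma~\ref{lem:SbetaMoveOnD2Beta}. Concatenating the recorded sequences expresses the passage from the arrow diagram of $L$ to that of $L'$ as a finite sequence of $\Omega_{1}-\Omega_{5}$ and $S_{\beta}$-moves.

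The main obstacle is the bookkeeping needed to keep the links generic between consecutive elementary moves and to confirm that capping the inner boundary of ${\bf A}^{2}$ to form ${\bf D}^{2}_{\beta}$ introduces no isotopies beyond those already accounted for. The key point to check is that a type~1 move never interacts with the marked point, so it is faithfully recorded by the annular moves $\Omega_{1}-\Omega_{5}$, whereas the interaction with the filling, which is the only phenomenon not visible in ${\bf A}^{2}\times S^{1}$, occurs solely through type~2 moves and is captured exactly by $S_{\beta}$. Once the completeness of the two move types (the content borrowed from the discussion preceding Lemma~\ref{lem:SbetaMoveOnD2Beta} and from \cite{Hud1969}) is granted, the theorem follows as a direct consequence of Lemma~\ref{lem:SbetaMoveOnD2Beta} together with the annulus bijection.
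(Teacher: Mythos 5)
Your proposal is correct and follows essentially the same route as the paper, which states the theorem as a direct consequence (``Consequently...'') of exactly the ingredients you assemble: Corollary~6.3 of \cite{Hud1969} decomposing ambient isotopies into the two move types, the arrow-diagram correspondence for ${\bf A}^{2}\times S^{1}$ handling the moves supported in normal cylinders via $\Omega_{1}-\Omega_{5}$, and Lemma~\ref{lem:SbetaMoveOnD2Beta} translating $2$-handle slides into $S_{\beta}$-moves. Your explicit treatment of both directions and the genericity bookkeeping only makes visible what the paper leaves implicit.
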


\section{Basis for KBSM of \texorpdfstring{${\bf A}^{2}\times S^{1}$}{A\unichar{0178}\unichar{0215}S\unichar{0185}}}
\label{s:basis_A2TimesS1}

Let ${\bf F}$ be an oriented surface and let $R = \mathbb{Z}[A^{\pm 1}]$. Denote by $\mathcal{D}({\bf F})$ the set of equivalence classes of arrow diagrams (including the empty diagram) in ${\bf F}$ modulo $\Omega_{1}-\Omega_{5}$ moves and let $R\mathcal{D}({\bf F})$ be the free $R$-module with basis $\mathcal{D}({\bf F})$. Let $S_{2,\infty}({\bf F})$ be submodule of $R\mathcal{D}({\bf F})$ generated by
\begin{equation*}
D_{+} - AD_{0}-A^{-1}D_{\infty}\,\,\text{and}\,\,D\sqcup T_{1} +(A^{2}+A^{-2})D,
\end{equation*}
for all skein triples $D_{+}$, $D_{0}$, and $D_{\infty}$ of arrow diagrams in ${\bf F}$ and disjoint unions $D\sqcup T_{1}$ of arrow diagram $D$ and the trivial circle $T_{1}$ (see Figure~\ref{fig:SkeinTripleOfDiagrams}).

\begin{figure}[ht]
\centering
\includegraphics[scale=0.6]{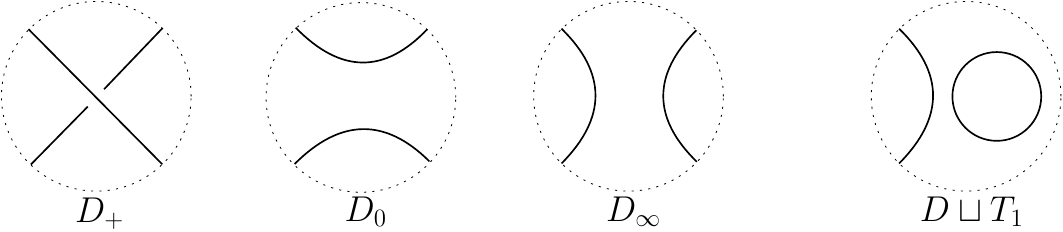}
\caption{Skein triple $D_{+}$, $D_{0}$, $D_{\infty}$, and disjoint union $D\sqcup T_{1}$}
\label{fig:SkeinTripleOfDiagrams}
\end{figure}

Since isotopy classes of framed links in ${\bf F}\times S^{1}$ are in one-to-one correspondence with equivalence classes of arrow diagrams in ${\bf F}$ modulo $\Omega_{1}-\Omega_{5}$ moves, the quotient module
\begin{equation*}
S\mathcal{D}({\bf F}) = R\mathcal{D}({\bf F})/S_{2,\infty}({\bf F}) 
\end{equation*}
is the KBSM of ${\bf F}\times S^{1}$. As shown in \cite{Prz1999} (see Theorem~2.3), KBSM of ${\bf F}\times I$ is a free $R$-module with a basis consisting of all simple closed curves on ${\bf F}$ (including the empty curve) and their parallel copies. Furthermore, new bases for $S\mathcal{D}({\bf D}^{2})$, $S\mathcal{D}({\bf A}^{2})$, and $S\mathcal{D}({\bf F}_{0,3})$, where ${\bf F}_{0,3}$ is a compact planar surface with three boundary components, were also found in \cite{MD2009}. However, the bases introduced in \cite{Prz1999} and \cite{MD2009} do not appear to be well suited for deriving the main result in Section~\ref{s:basis_V} (see Theorem~\ref{thm:basis_for_V(beta,2)}). In this section, we construct a family of bases for the KBSM of ${\bf A}^{2}\times S^{1}$, parameterized by an integer $c$. 

Let $\lambda$ represent the curve shown to the left in Figure~\ref{fig:LambdaCurveOnD2}, and let $\lambda^{k}$ denote the disjoint union of $k\geq 0$ copies of $\lambda$, with the convention that $\lambda^{0} = 1$ represents the empty arrow diagram.

\begin{figure}[H]
\centering
\includegraphics[scale=0.6]{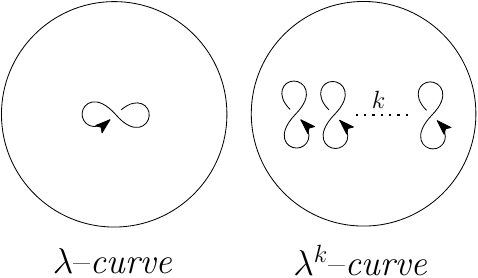}
\caption{Arrow diagrams of $\lambda$ and $\lambda^{k}$ on ${\bf D}^{2}$}
\label{fig:LambdaCurveOnD2}
\end{figure}

As shown in \cite{MD2009} (see Proposition~3.7), $S\mathcal{D}({\bf D}^{2})$ is a free $R$-module with basis of $(t_{1})^{k}$-curves\footnote{In \cite{MD2009}, $t_{1}$-curve is denoted by $x$.} in ${\bf D}^{2}$ (see Figure~\ref{fig:Relation_For_T}). To represent an arrow diagram $D$ in this basis, we simply compute $\langle D \rangle_{r}$ according to Definition~3.5 in \cite{MD2009}. However, in this paper, we will use its modified version which first computes the original bracket and then replaces $t_{1}$-curves with $-A^{3}\lambda$-curves. As shown in \cite{MD2009} the bracket $\langle\cdot \rangle_{r}$ (thus also its modified version) is invariant under $\Omega_{1}-\Omega_{5}$-moves. 

\begin{figure}[H]
\centering
\includegraphics[scale=0.8]{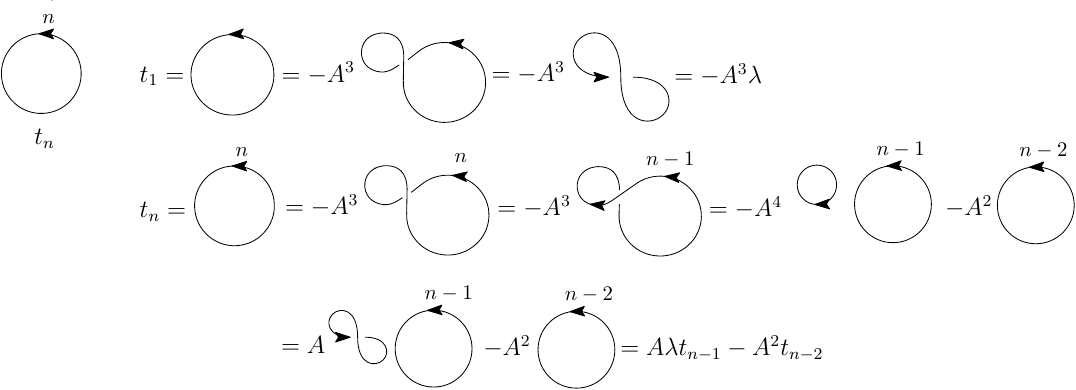}
\caption{$t_{n}$-curve and relation $t_{n} = A\lambda t_{n-1}-A^{2}t_{n-2}$}
\label{fig:Relation_For_T}
\end{figure}

We will use later in this paper the following identities in $S\mathcal{D}({\bf D}^{2})$: For the curve $t_{n}$ on the left in Figure~\ref{fig:Relation_For_T},
\begin{equation*}
t_{1} = -A^{3}\lambda,\,\, t_{0} = -A^{2}-A^{-2},\,\,\text{and}\,\, t_{n} - A \lambda t_{n-1} + A^{2}t_{n-2} = 0.
\end{equation*}
Therefore, if we let 
\begin{equation*}
P_{n} = \langle t_{n} \rangle_{r},
\end{equation*}
then clearly $P_{n}$ is in $R[\lambda]$, $P_{0} = -A^{2}-A^{-2}$, $P_{1} = -A^{3} \lambda$, and
\begin{equation}
\label{eqn:Pn}
P_{n} - A \lambda P_{n-1} + A^{2} P_{n-2} = 0.
\end{equation}

\begin{figure}[H]
\centering
\includegraphics[scale=0.7]{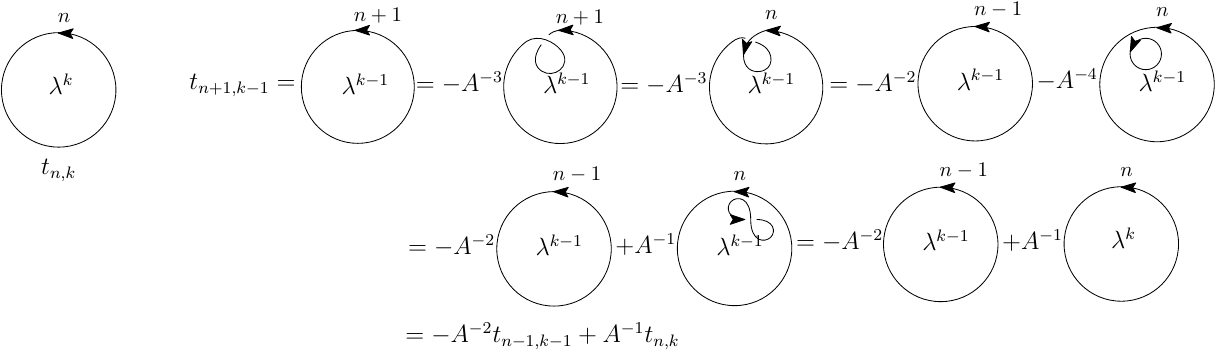}
\caption{$t_{n,k}$-curve and relation $t_{n+1,k-1} = -A^{-2}t_{n-1,k-1} + A^{-1}t_{n,k}$}
\label{fig:Relation_For_Tnk}
\end{figure}

Analogously, for a curve $t_{n,k}$ on the left of Figure~\ref{fig:Relation_For_Tnk}, the following identity holds in $S\mathcal{D}({\bf D}^{2})$:
\begin{equation*}
t_{n,0} = t_{n} \,\, \text{and} \,\, t_{n,k} = At_{n+1,k-1} + A^{-1}t_{n-1,k-1}.
\end{equation*}
Therefore, if we let 
\begin{equation*}
P_{n,k} = \langle t_{n,k} \rangle_{r},
\end{equation*}
then clearly $P_{n,k}$ is in $R[\lambda]$, $P_{n,0} = P_{n}$, and
\begin{equation}
\label{eqn:Pnk}
P_{n,k} = A P_{n+1,k-1} + A^{-1} P_{n-1,k-1}.
\end{equation}

Moreover, as it is demonstrated in Figure~\ref{fig:RelationForTnkInDisk}, the following identity also holds
\begin{equation}
\label{eqn:Pnk_1}
P_{n,k} = A\lambda P_{n-1,k} - A^{2}P_{n-2,k}.
\end{equation} 

\begin{figure}[H]
\centering
\includegraphics[scale=0.8]{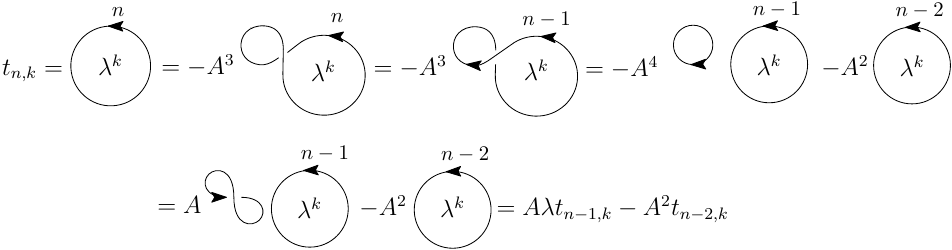}
\caption{Relation $t_{n,k} = A\lambda t_{n-1,k} - A^{2} t_{n-2,k}$} 
\label{fig:RelationForTnkInDisk}
\end{figure}

Let $x_{n}$ represent the curve on ${\bf A}^{2}$ shown in Figure~\ref{fig:CurveXnOnAnnulus}.

\begin{figure}[H]
\centering
\includegraphics[scale=0.7]{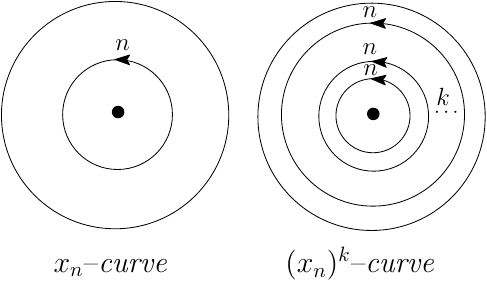}
\caption{Curve $x_{n}$ and $(x_{n})^k$ on ${\bf{A}}^{2}$}
\label{fig:CurveXnOnAnnulus}
\end{figure}

We denote by $(x_{n})^{k}$ the $k$ parallel copies of $x_{n}$, with the convention that $(x_{n})^{0} = 1$ represents the empty arrow diagram. Fix $c\in \mathbb{Z}$, and let
\begin{equation*}
\Sigma_{c} = \{\lambda^{n}, x_{c}\lambda^{n}(x_{c})^{k}, x_{c+1}\lambda^{n}(x_{c})^{k} \mid \ n,k \geq 0 \}\subset \mathcal{D}({\bf A}^{2}),
\end{equation*}
where curves appearing in the word $x_{c+\varepsilon}\lambda^{n}(x_{c})^{k}$ ($\varepsilon = 0,\,1$) are ordered from the inner to the outer circle of ${\bf{A}}^{2}$. We show in this section that
\begin{equation*}
S\mathcal{D}({\bf A}^{2}) \cong R\Sigma_{c}.
\end{equation*}
It suffices to construct a surjective homomorphism $\psi_{c}$ from $R\mathcal{D}({\bf A}^{2})$ to $R\Sigma_{c}$ that descends to an isomorphism of $S\mathcal{D}({\bf A}^{2})$ and $R\Sigma_{c}$. To construct $\psi_{c}$, we start from an arrow diagram $D$ in ${\bf A}^{2}$, we assign to each of its crossings either a positive marker or a negative marker, as shown in Figure~\ref{fig:markers}.

\begin{figure}[H]
\centering
\includegraphics[scale=0.9]{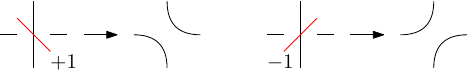}
\caption{Positive and negative markers}
\label{fig:markers}
\end{figure}

Such an assignment $s$ is called a \emph{Kauffman state} of $D$, and we denote by $\mathcal{K}(D)$ the set of all Kauffman states of $D$. For $s\in \mathcal{K}(D)$, let $D_{s}$ denote the arrow diagram (e.g., Figure~\ref{fig:TrefoilOnA2withMarkers}) obtained from $D$ by smoothing all its crossings according to the rule shown in Figure~\ref{fig:markers}.

\begin{figure}[H]
\centering
\includegraphics[scale=0.6]{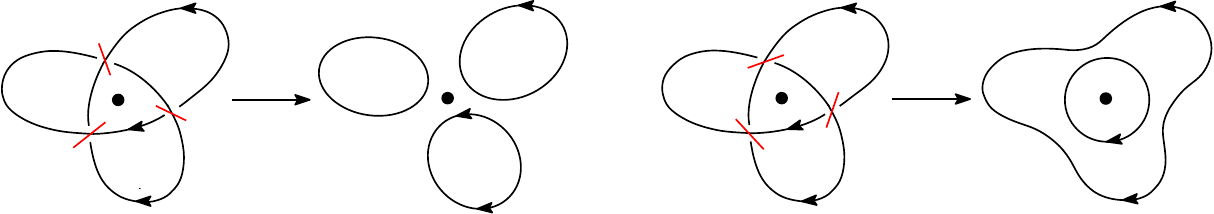}
\caption{Kaufman states for an arrow diagram on ${\bf{A}}^{2}$}
\label{fig:TrefoilOnA2withMarkers}
\end{figure}

Let $D$ be an arrow diagram in ${\bf{A}}^{2}$ without crossings, starting from the inner circle of ${\bf{A}}^{2}$, we denote by $D_{0}$ the first curve appearing before the $x$-curve, by $D_{1}$ the curve between the first and second copies of the $x$-curve, and so on (see Figure~\ref{fig:ArrowDiagramOnA2witoutCrossings}). Thus, $D$ can be regarded as a word:
\begin{equation*}
D = D_{0}x_{m_{1}}D_{1}\ldots D_{k-1}x_{m_{k}}D_{k},   
\end{equation*}
where each $D_{i}\subset {\bf D}^{2}$ has no crossings.

\begin{figure}[H]
\centering
\includegraphics[scale=0.7]{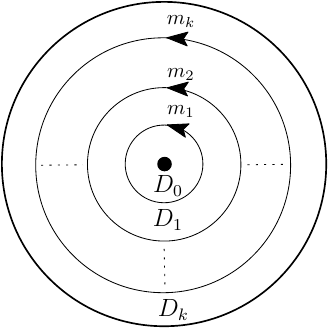}
\caption{Arrow diagram $D$ with no crossings in ${\bf A}^{2}$}
\label{fig:ArrowDiagramOnA2witoutCrossings}
\end{figure}

Let
\begin{equation*}
\langle D_{s} \rangle = (-A^{-2}-A^{2})^{|D_{s}|}D(s), 
\end{equation*}
where $|D_{s}|$ is the number of trivial circles of $D_{s}$, and $D(s)$ is the arrow diagram obtained from $D_{s}$ after removing trivial circles. Define
\begin{equation*}
\llangle D \rrangle = \sum_{s\in \mathcal{K}(D)}A^{p(s)-n(s)}\langle D_{s}\rangle,
\end{equation*}
where $p(s)$ and $n(s)$ is the number of positive and negative markers of $s$. 

It follows from the definition of $\llangle \cdot \rrangle$ that if arrow diagrams $D$ and $D'$ related by $\Omega_{1}-\Omega_{4}$-moves, then
\begin{equation*}
\llangle D \rrangle = \llangle D' \rrangle
\end{equation*}
and for each skein triple $D_{+}$, $D_{0}$, $D_{\infty}$, and disjoint unions $D\sqcup T_{1}$,
\begin{equation*}
\llangle D_{+} \rrangle - A \llangle D_{0} \rrangle - A^{-1}\llangle D_{\infty} \rrangle = 0\,\,\text{and}\,\,\llangle D\sqcup T_{1} \rrangle + (A^{2}+A^{-2})\llangle D \rrangle = 0.
\end{equation*}

\begin{figure}[H]
\centering
\includegraphics[scale=0.7]{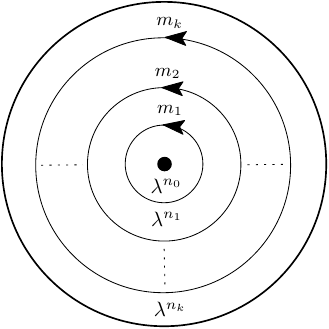}
\caption{Arrow diagrams $\lambda^{n_{0}}x_{m_{1}}\lambda^{n_{1}} \cdots \lambda^{n_{k-1}}x_{m_{k}}\lambda^{n_{k}}$}
\label{fig:Generators_Annulus}
\end{figure}

Let $\Gamma$ be the set of arrow diagrams\footnote{We will not make a distinction between these arrow diagrams with their equivalence classes modulo $\Omega_{1}-\Omega_{4}$-moves.} on ${\bf A}^{2}$ defined by
\begin{equation*}
\Gamma = \{\lambda^{n_{0}}x_{m_{1}}\lambda^{n_{1}} \cdots \lambda^{n_{k-1}}x_{m_{k}}\lambda^{n_{k}} \mid n_{i} \geq 0, \ m_{i} \in \mathbb{Z} \ \text{and} \ k \geq 0 \},
\end{equation*}
where $\lambda^{n_{0}}x_{m_{1}}\lambda^{n_{1}} \cdots \lambda^{n_{k-1}}x_{m_{k}}\lambda^{n_{k}}$ is shown in Figure~\ref{fig:Generators_Annulus}. For an arrow diagram 
\begin{equation*}
w = D_{0}x_{m_{1}}D_{1}\ldots D_{k-1}x_{m_{k}}D_{k}    
\end{equation*}
in ${\bf A}^{2}$ with no crossing, we apply $\langle \cdot \rangle_{r}$ to each $D_{i}$ and express $w$ as an $R$-linear combination of elements of $\Gamma$, i.e., we may define $\llangle w \rrangle_{\Gamma}$ by
\begin{equation*}
\llangle w \rrangle_{\Gamma} = \langle D_{0}\rangle_{r} \, x_{m_{1}}\langle D_{1}\rangle_{r} \ldots \langle D_{k-1}\rangle_{r} \, x_{m_{k}} \langle D_{k}\rangle_{r}.
\end{equation*}
Let $D = w_{1}Ww_{2}$ and $D'= w_{1}W'w_{2}$ be arrow diagrams in Figure~\ref{fig:Omega5FirstTwoTypes}, where $w_{1},w_{2}$ are arrow diagrams with no crossings and components $W,W' \subset {\bf D}^{2}$ differ by an $\Omega_{5}$-move. We denote by $W_{+},W_{-}, W'_{+}$, and $W'_{-}$ arrow diagrams obtained from $W$ and $W'$ after smoothing their corresponding crossing according to positive and negative markers.

\begin{figure}[H]
\centering
\includegraphics[scale=0.7]{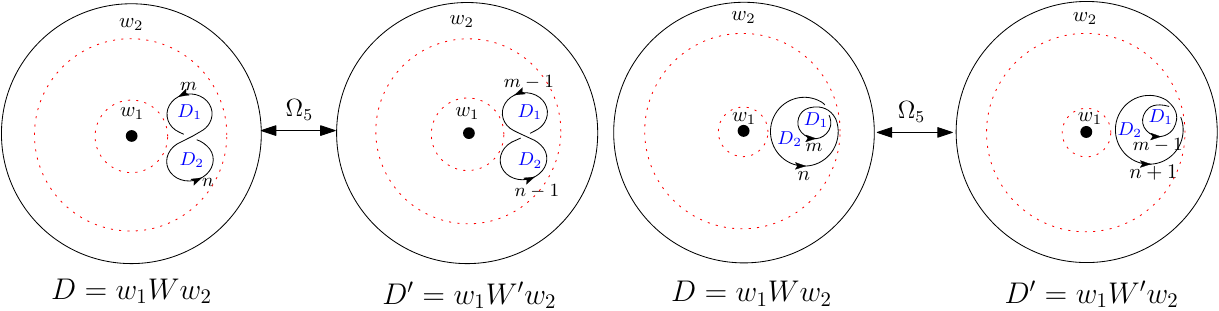}
\caption{Diagrams $D$ and $D'$ in ${\bf A}^{2}$ related by $\Omega_{5}$-moves}
\label{fig:Omega5FirstTwoTypes}
\end{figure}

Since $\langle W \rangle_{r} = \langle W'\rangle_{r}$, it follows that
\begin{eqnarray*}
&&\llangle w_{1}(A W_{+} + A^{-1} W_{-} - A W'_{+} - A^{-1} W'_{-}) w_{2} \rrangle_{\Gamma}\\
&=&\llangle w_{1} \rrangle_{\Gamma} \langle A W_{+} + A^{-1} W_{-} - A W'_{+} - A^{-1}  W'_{-} \rangle_{r} \llangle w_{2} \rrangle_{\Gamma}\\
&=&\llangle w_{1} \rrangle_{\Gamma} (A \langle W_{+} \rangle_{r} + A^{-1} \langle W_{-} \rangle_{r} - A \langle W'_{+} \rangle_{r} - A^{-1} \langle W'_{-}\rangle_{r} ) \llangle w_{2} \rrangle_{\Gamma} = \llangle w_{1}\rrangle_{\Gamma} (\langle W \rangle_{r} - \langle W' \rangle_{r})\llangle w_{2} \rrangle_{\Gamma} = 0.
\end{eqnarray*}
Consequently,
\begin{equation}
\label{eqn:InvarianceOnOmega5_1_2}
\llangle w_{1}(A W_{+} + A^{-1} W_{-} - A W'_{+} - A^{-1} W'_{-}) w_{2} \rrangle_{\Gamma} = 0.
\end{equation}

\medskip

\begin{lemma}
\label{lem:Gamma_c} 
Let $c \in \mathbb{Z}$ and $w\in \Gamma$, then in $S\mathcal{D}({\bf A}^{2})$,
\begin{enumerate}
\item for $w = w'x_{m}\lambda^{n}(x_{c})^{k}$ with $w'$ containing at least one $x$ curve and $n \geq 1$,
\begin{equation*}
w = Aw'x_{m-1}\lambda^{n-1}(x_{c})^{k} + A^{-1}w'x_{m+1}\lambda^{n-1}(x_{c})^{k};
\end{equation*}
\item for $w = w'x_{m}(x_{c})^{k}$ with $w'$ containing at least one $x$ curve and $m > c+1$, 
\begin{equation*}
w = A^{-1}w'\lambda x_{m-1}(x_{c})^{k} - A^{-2}w'x_{m-2}(x_{c})^{k};
\end{equation*}
\item for $w = w'x_{m}(x_{c})^{k}$ with $w'$ containing at least one $x$ curve and $m < c$, 
\begin{equation*}
w = Aw'\lambda x_{m+1}(x_{c})^{k} - A^{2}w'x_{m+2}(x_{c})^{k};
\end{equation*}
\item for $w = w'x_{m}\lambda^{n}x_{c+1}(x_{c})^{k}$, 
\begin{equation*}
w = -A^{-1}w'\lambda t_{c-m,n}(x_{c})^{k} + 2w't_{c-1-m,n}(x_{c})^{k} + A^{-2}w'x_{m+1}\lambda^{n}(x_{c})^{k+1};
\end{equation*}
\item for $w = \lambda^{n}x_{m}\lambda^{n'}(x_{c})^{k}$ with $n \geq 1$,
\begin{equation*}
w = A\lambda^{n-1}x_{m+1}\lambda^{n'}(x_{c})^{k} + A^{-1}\lambda^{n-1}x_{m-1}\lambda^{n'}(x_{c})^{k};
\end{equation*}
\item for $w = x_{m}\lambda^{n}(x_{c})^{k}$ with $m > c+1$,
\begin{equation*}
w = Ax_{m-1}\lambda^{n+1}(x_{c})^{k} - A^{2}x_{m-2}\lambda^{n}(x_{c})^{k};
\end{equation*}
\item for $w = x_{m}\lambda^{n}(x_{c})^{k}$ with $m < c$,
\begin{equation*}
w = A^{-1}x_{m+1}\lambda^{n+1}(x_{c})^{k} - A^{-2}x_{m+2}\lambda^{n}(x_{c})^{k}.
\end{equation*}
\end{enumerate}
\end{lemma}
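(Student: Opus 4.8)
The plan is to prove all seven identities as local computations in the skein module $S\mathcal{D}(\mathbf{A}^{2})$, using a single toolkit: the defining skein relation $D_{+} = AD_{0} + A^{-1}D_{\infty}$, the disk recursions \eqref{eqn:Pn}, \eqref{eqn:Pnk}, \eqref{eqn:Pnk_1} for the curves $t_{n}$ and $t_{n,k}$, and the $\Omega_{5}$-invariance of the annular bracket recorded in \eqref{eqn:InvarianceOnOmega5_1_2}. For each identity I would isolate the local tangle in which the two sides differ, resolve its crossing(s) by the skein relation, identify each resulting crossingless diagram as a word in $\Gamma$ (or a disk curve $t_{n,k}$), and then match coefficients against the stated right-hand side. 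The role of \eqref{eqn:InvarianceOnOmega5_1_2} is to guarantee that a crossing created inside a disk piece $D_{i}$ may be resolved there and re-expressed via $\langle\cdot\rangle_{r}$ without changing the class in $S\mathcal{D}(\mathbf{A}^{2})$; this is what legitimizes treating the $x$-curves and the intervening disk pieces independently.

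I would dispatch the identities in three groups. First, (1) and (5) are the ``slide a $\lambda$ past an $x$-curve'' relations: a single $\lambda$-loop adjacent to $x_{m}$ can be merged into the core by one skein resolution, its two smoothings producing $x_{m-1}$ and $x_{m+1}$ with coefficients $A$ and $A^{-1}$. The only difference between (1) and (5) is whether the $\lambda$ sits on the outer or inner side of $x_{m}$, which interchanges the two smoothings and hence swaps $A \leftrightarrow A^{-1}$ together with the sign of the arrow shift; both are the annular incarnation of \eqref{eqn:Pnk}. I expect these to be essentially immediate once the local picture is drawn.

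Second, (2), (3), (6), (7) are the three-term arrow-reduction recurrences that push the index $m$ of an $x$-curve toward the window $\{c, c+1\}$. They are the annular lifts of \eqref{eqn:Pnk_1}, namely $P_{n,k} = A\lambda P_{n-1,k} - A^{2} P_{n-2,k}$, together with its $A \leftrightarrow A^{-1}$ mirror: relation (6) raises an existing $\lambda^{n}$ to $\lambda^{n+1}$, while (2), (3), (7) create or absorb a single $\lambda$. The reduction direction ($m>c+1$ versus $m<c$) and the presence or absence of a preceding $x$-curve in $w'$ jointly determine which neighbour the crossing is resolved against, and hence the coefficients, grouping $(2),(7)$ (with $A^{-1}, -A^{-2}$) against $(3),(6)$ (with $A, -A^{2}$). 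Each is then proved by applying the same resolution in the relevant annular region and checking that the trailing factor $(x_{c})^{k}$ and the prefix $w'$ are carried along unchanged.

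Third --- and this is the step I expect to be the main obstacle --- is (4), which merges the two adjacent core curves $x_{m}$ and $x_{c+1}$ (separated only by $\lambda^{n}$) into disk curves. Here the two strands genuinely interact: one family of smoothings collapses the oppositely-running pair $x_{m}, x_{c+1}$ into a single disk loop, turning the $\lambda^{n}$ between them into the second index and yielding the terms $t_{c-m,n}$ and $t_{c-1-m,n}$ (the arrow count $c-m = (c+1)-m-1$ arising from the merge), while the complementary smoothing reduces $x_{c+1}$ to $x_{c}$ and shifts $x_{m}$ to $x_{m+1}$, producing $A^{-2}w'x_{m+1}\lambda^{n}(x_{c})^{k+1}$. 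The delicate points are the orientation bookkeeping across the merge and the emergence of the coefficient $2$ and of both $t_{c-m,n}$ and $t_{c-1-m,n}$, which I anticipate will require resolving two crossings and then recombining the intermediate terms through \eqref{eqn:Pnk}; invariance \eqref{eqn:InvarianceOnOmega5_1_2} will again be needed to move the created crossings into a disk piece. Once (4) is established, the remaining six follow the uniform pattern above, so I would present (4) in full detail and treat the others as variations of the two model resolutions.
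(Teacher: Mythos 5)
Your proposal is correct and takes essentially the same route as the paper: the paper derives the single recursion $x_{n} = Ax_{n-1}\lambda - A^{2}x_{n-2} = A\lambda x_{n+1} - A^{2}x_{n+2}$ (the annular counterpart of \eqref{eqn:Pnk} and \eqref{eqn:Pnk_1}) by a local skein computation and reads off (1)--(3), (5)--(7) as algebraic rearrangements of it, exactly as in your first two groups. For (4) the paper does precisely what you anticipate: an $\Omega_{2}$/$\Omega_{5}$ move creates two crossings between $x_{m}$ and $x_{c+1}$, and resolving both gives four smoothings whose two mixed terms coincide (yielding the coefficient $2$ and the curves $t_{c-m,n}$, $t_{c-1-m,n}$), with the remaining smoothing giving $A^{-2}w'x_{m+1}\lambda^{n}(x_{c})^{k+1}$.
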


\begin{figure}[ht]
\centering
\includegraphics[scale=0.7]{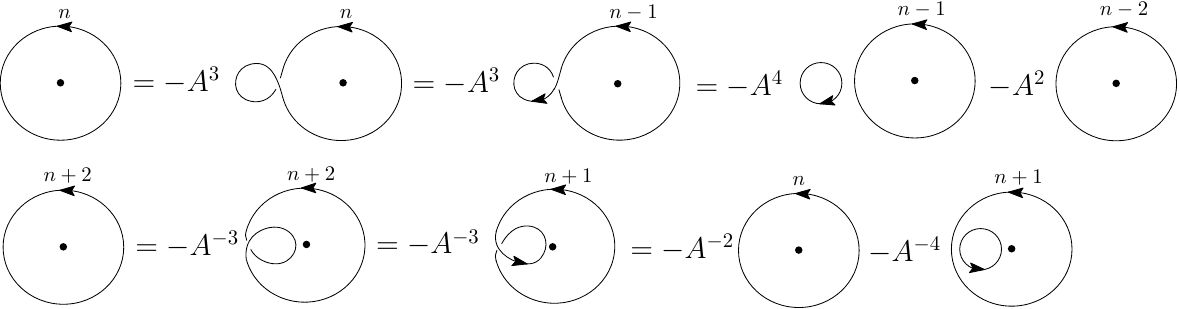}
\caption{Relation for $x_{n}$-curve in ${\bf A}^{2}$}
\label{fig:xn_rel}
\end{figure}

\begin{proof}
As it is shown in Figure~\ref{fig:xn_rel},
\begin{equation*}
x_{n} = -A^{4}x_{n-1}t_{-1} - A^{2}x_{n-2} = -A^{-2}t_{1}x_{n+1} - A^{2}x_{n+2}.
\end{equation*}
Since $t_{-1} = -A^{-3}\lambda$ and $t_{1} = -A^{3}\lambda$, the above equation becomes
\begin{equation*}
x_{n} = Ax_{n-1} \lambda - A^{2}x_{n-2} = A\lambda x_{n+1} - A^{2}x_{n+2}.
\end{equation*}
Identities 1)--3) and 5)--7) follow directly by applying the recursion relations for $x_{n}$ from the above. For relation 4), we notice that links on the left and right of Figure~\ref{fig:Omega2and5ForBracketC} differ by $\Omega_{2}$ and $\Omega_{5}$ moves. Hence,
\begin{equation*}
w'x_{m_{1}}\lambda^{n}x_{m_{2}}(x_{c})^{k} = A^{2}w't_{-1}t_{m_{2}-1-m_{1},n}(x_{c})^{k} + 2w't_{m_{2}-2-m_{1},n}(x_{c})^{k} + A^{-2}w'x_{m_{1}+1}\lambda^{n}x_{m_{2}-1}(x_{c})^{k}.  
\end{equation*}
Therefore, 4) follows by taking $m_{1} = m$ and $m_{2} = c+1$ in the above equation.
\end{proof}

\begin{figure}[ht]
\centering
\includegraphics[scale=0.8]{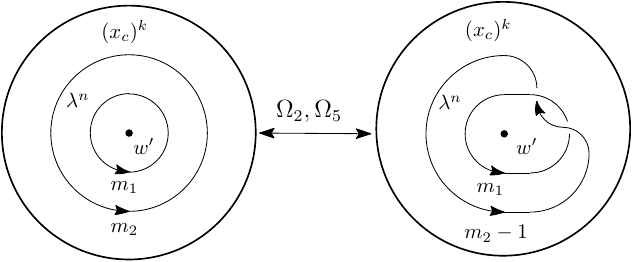}
\caption{$\Omega_{2}$ and $\Omega_{5}$-moves between $x_{m_{1}}$ and $x_{m_{2}}$-curves on ${\bf A}^{2}$}
\label{fig:Omega2and5ForBracketC}
\end{figure}

For a fixed $c \in \mathbb{Z}$, we consider a subset $\Sigma_{c}$ of $\Gamma$ defined by
\begin{equation*}
\Sigma_{c} = \{\lambda^{n}, x_{c}\lambda^{n}(x_{c})^{k}, x_{c+1}\lambda^{n}(x_{c})^{k} \mid \ n,k \geq 0 \}.
\end{equation*}
After applying identities from Lemma~\ref{lem:Gamma_c} and $\langle \cdot \rangle_{r}$ sufficiently many times, we are able to assign to each $w \in \Gamma$ an element $\llangle w \rrangle_{c} \in R\Sigma_{c}$. More precisely, for $w\in R\Gamma$, we define $\llangle w \rrangle_{c}\in R\Sigma_{c}$ as follows:
\begin{enumerate}[(a)]
\item if $w = \sum_{w'\in S} r_{w'}w'$, $S$ is a finite subset of $\Gamma$ with at least two elements, let 
\begin{equation*}
\llangle w \rrangle_{c} = \sum_{w'\in S} r_{w'}\llangle w' \rrangle_{c};
\end{equation*}
\item for $w \in \Sigma_{c}$,
\begin{equation*}
\llangle w \rrangle_{c} = w;
\end{equation*}
\item for $w = w'x_{m}\lambda^{n}(x_{c})^{k}$, where $w'$ contains at least one $x$ curve and $n \geq 1$,
\begin{equation*}
\llangle w \rrangle_{c} = A \llangle w'x_{m-1}\lambda^{n-1}(x_{c})^{k} \rrangle_{c} + A^{-1} \llangle w'x_{m+1}\lambda^{n-1}(x_{c})^{k} \rrangle_{c};
\end{equation*}
\item for $w = w'x_{m}(x_{c})^{k}$, where $w'$ contains at least one $x$ curve and $m > c+1$,
\begin{equation*}
\llangle w \rrangle_{c} = A^{-1} \llangle w'\lambda x_{m-1}(x_{c})^{k} \rrangle_{c} - A^{-2} \llangle w'x_{m-2}(x_{c})^{k} \rrangle_{c};
\end{equation*}
\item for $w = w'x_{m}(x_{c})^{k}$, where $w'$ contains at least one $x$ curve and $m < c$,
\begin{equation*}
\llangle w \rrangle_{c} = A \llangle w'\lambda x_{m+1}(x_{c})^{k} \rrangle_{c} - A^{2} \llangle w'x_{m+2}(x_{c})^{k} \rrangle_{c};
\end{equation*}
\item for $w = w'x_{m}\lambda^{n}x_{c+1}(x_{c})^{k}$,
\begin{equation*}
\llangle w \rrangle_{c} = -A^{-1} \llangle w'\lambda P_{c-m,n}(x_{c})^{k} \rrangle_{c} + 2 \llangle w'P_{c-1-m,n}(x_{c})^{k} \rrangle_{c} + A^{-2} \llangle w'x_{m+1}\lambda^{n}(x_{c})^{k+1} \rrangle_{c};
\end{equation*}
\item for $w = \lambda^{n}x_{m}\lambda^{n'}(x_{c})^{k}$ with $n \geq 1$,
\begin{equation*}
\llangle w \rrangle_{c} = A \llangle \lambda^{n-1}x_{m+1}\lambda^{n'}(x_{c})^{k} \rrangle_{c} + A^{-1} \llangle \lambda^{n-1}x_{m-1}\lambda^{n'}(x_{c})^{k} \rrangle_{c};
\end{equation*}
\item for $w = x_{m}\lambda^{n}(x_{c})^{k}$ with $m > c+1$,
\begin{equation*}
\llangle w \rrangle_{c} = A \llangle x_{m-1}\lambda^{n+1}(x_{c})^{k} \rrangle_{c} - A^{2} \llangle x_{m-2}\lambda^{n}(x_{c})^{k} \rrangle_{c};
\end{equation*}
\item for $w = x_{m}\lambda^{n}(x_{c})^{k}$ with $m < c$,
\begin{equation*}
\llangle w \rrangle_{c} = A^{-1} \llangle x_{m+1}\lambda^{n+1}(x_{c})^{k} \rrangle_{c} - A^{-2} \llangle x_{m+2}\lambda^{n}(x_{c})^{k} \rrangle_{c}.
\end{equation*}
\end{enumerate}

\begin{lemma}
\label{lem:loc_prop_bracket_c}    
For any $w = w_{1}x_{m}w_{2} \in \Gamma$,
\begin{equation}
\label{eqn:xm_rel_1}
\llangle w \rrangle_{c} = A\llangle w_{1}\lambda x_{m+1}w_{2} \rrangle_{c} - A^{2}\llangle w_{1}x_{m+2}w_{2} \rrangle_{c}
\end{equation}
and
\begin{equation}
\label{eqn:xm_rel_2}
\llangle w \rrangle_{c} = A\llangle w_{1}x_{m-1} \lambda w_{2} \rrangle_{c} - A^{2}\llangle w_{1}x_{m-2}w_{2} \rrangle_{c}.
\end{equation}
\end{lemma}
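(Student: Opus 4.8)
The plan is to argue by Noetherian induction on the reduction defining $\llangle\cdot\rrangle_c$. The clauses (a)--(i) specify, for each $w\in\Gamma\setminus\Sigma_c$, a single rewriting step into an $R$-combination of strictly simpler words, so $\llangle\cdot\rrangle_c$ is a terminating, deterministic reduction and I may induct on a well-founded complexity $\mu(w)$ under which every clause strictly decreases. The natural $\mu$ is lexicographic: the number of free $x$-curves (those of index $\neq c$ not yet absorbed into the tail) comes first, since only clause (f) changes it and (f) strictly lowers it; below that sits the complexity of the outermost free $x$-curve, namely the length of the $\lambda$-block trailing it (reduced by (c),(g)) and the distance of its index from $\{c,c+1\}$ (reduced by (d),(e),(h),(i)). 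By clause (a) and $R$-linearity it suffices to treat a single word $w=w_1x_mw_2\in\Gamma$, and I prove \eqref{eqn:xm_rel_1} and \eqref{eqn:xm_rel_2} simultaneously. Both are the images under $\llangle\cdot\rrangle_c$ of the two forms $x_m=A\lambda x_{m+1}-A^{2}x_{m+2}=Ax_{m-1}\lambda-A^{2}x_{m-2}$ of the $x$-recursion established in the proof of Lemma~\ref{lem:Gamma_c}. I also record the \emph{backwards} reading of \eqref{eqn:xm_rel_1}, namely $\llangle w_1'\lambda x_{i}w_2'\rrangle_c=A^{-1}\llangle w_1'x_{i-1}w_2'\rrangle_c+A\llangle w_1'x_{i+1}w_2'\rrangle_c$, which the inductive hypothesis supplies whenever $w_2'$ has smaller complexity and which absorbs a $\lambda$ sitting immediately to the inner side of an $x$-curve.

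The first main case is when the \emph{active site}---the subword on which the defining clause for $\llangle w\rrangle_c$ acts (the outermost free $x$-curve, the neighbouring $\lambda$'s, and the canonical tail)---lies strictly to the outer side of the distinguished $x_m$, i.e. inside $w_2$. Which clause fires, and how it rewrites the active site, depends only on $w_2$ and on the prefix still containing an $x$-curve, which persists because it contains $x_m$. Hence the \emph{same} clause applies verbatim to each of $w$, $w_1\lambda x_{m+1}w_2$ and $w_1x_{m+2}w_2$ (resp.\ to the $m-1,m-2$ terms), rewriting $w_2$ to one common simpler $w_2'$ and leaving the prefix through $x_m$ untouched; since passing $x_m$ to $\lambda x_{m+1},x_{m+2}$ does not change the outer-focused part of $\mu$, these three words have equal complexity and reduce in a common first step. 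Applying the inductive hypothesis to the resulting words of smaller $\mu$ and summing, \eqref{eqn:xm_rel_1} and \eqref{eqn:xm_rel_2} for $w$ follow by $R$-linearity.

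The second main case is when $x_m$ is itself the active site, so $w_2=\lambda^{n}(x_c)^{k}$ or $w_2=\lambda^{n}x_{c+1}(x_c)^{k}$. Here I subdivide according to $n\geq1$ versus $n=0$, to the position of $m$ relative to $\{c,c+1\}$, and to whether $w_1$ is empty, which selects among clauses (c)/(g), (d)/(h), (e)/(i), and (f); in each subcase I expand both sides with the governing clauses and reconcile them, at strictly smaller $\mu$, using the inductive hypothesis and the backwards reading above. A representative computation is $n\geq1$ with $w_1$ containing an $x$-curve: clause (c) turns the left side of \eqref{eqn:xm_rel_1} into $A\llangle w_1x_{m-1}\lambda^{n-1}(x_c)^{k}\rrangle_c+A^{-1}\llangle w_1x_{m+1}\lambda^{n-1}(x_c)^{k}\rrangle_c$, while applying (c) to the two terms on the right and then the rearranged form of \eqref{eqn:xm_rel_1} at the shorter block $\lambda^{n-1}$ (the inductive hypothesis) to delete the stray $\lambda$ left of $x_{m+1}$ and $x_{m+3}$ collapses the right side to the identical expression; relation \eqref{eqn:xm_rel_2} in this subcase is even simpler, following from two applications of (c) with no appeal to the inductive hypothesis. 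The purely index-shifting clauses (d),(e),(h),(i) are handled by the analogous algebraic bookkeeping.

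The delicate part, and where I expect the real work to lie, is the boundary of the canonical range, $m\in\{c,c+1\}$, together with clause (f). There the reduction crosses into the canonical tail $(x_c)^{k}$ and produces polynomial coefficients $P_{c-m,n}$, so matching the two sides forces one to trade the $x$-recursion against the recursions \eqref{eqn:Pn}, \eqref{eqn:Pnk}, and \eqref{eqn:Pnk_1} for the $P_{n,k}$. Concretely, applying \eqref{eqn:xm_rel_1} or \eqref{eqn:xm_rel_2} to an $x_{c+1}$- or $x_{c}$-curve must commute with the absorption of that curve into the tail carried out by (f); verifying that the $P_{n,k}$-bookkeeping coming out of (f) agrees with the plain index shifts coming out of (d)/(h) and (e)/(i) is the one computation that is not merely formal, and it is where most of the care must go. The remaining cases are routine.
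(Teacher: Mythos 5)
Your plan is correct and follows essentially the same route as the paper's proof: first reduce to $w_{2} = \lambda^{n}(x_{c+1})^{\varepsilon}(x_{c})^{k}$ by noting that any defining clause acting outside the distinguished $x_{m}$ applies identically to all three terms of \eqref{eqn:xm_rel_1}/\eqref{eqn:xm_rel_2}, then split on whether $w_{1}$ contains an $x$-curve and on $\varepsilon$, settling the $\varepsilon = 0$ cases by playing clauses (c)/(g) against (d)/(e)/(h)/(i) (the paper writes the iteration of (c)/(g) in closed binomial form where you induct step by step --- a cosmetic difference). The computation you defer as ``the delicate part'' --- which, to be precise, is the clause-(f) case arising for \emph{every} $m$ once $w_{2}$ contains an $x_{c+1}$, not only for $m \in \{c, c+1\}$ --- is indeed where the bulk of the paper's written work lies, and it closes exactly as you predict: apply (f) to all three terms, and because $P_{n,k}$ obeys the recursion \eqref{eqn:Pnk_1} (resp.\ \eqref{eqn:Pnk}) with the same coefficients as the $x$-curve recursion, the three expansions match term by term after invoking the already-proved $\varepsilon = 0$ case.
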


\begin{proof}
It suffices to show that \eqref{eqn:xm_rel_1} and \eqref{eqn:xm_rel_2} hold when
$w_{2} = \lambda^{n}(x_{c+1})^{\varepsilon}(x_{c})^{k}$, where $\varepsilon \in \{0,1\}$. Indeed, after using parts c), d), e), and f) in the definition of $\llangle \cdot \rrangle_{c}$ sufficiently many times, we can express $\llangle w \rrangle_{c}$, $\llangle w_{1}\lambda x_{m+1}w_{2} \rrangle_{c}$, and $\llangle w_{1}x_{m+2}w_{2} \rrangle_{c}$ (or $\llangle w \rrangle_{c}$, $\llangle w_{1}x_{m-1} \lambda w_{2} \rrangle_{c}$, and $\llangle w_{1}x_{m-2}w_{2} \rrangle_{c}$) as linear combinations of $\llangle w_{1}x_{m}w_{2}' \rrangle_{c}$, $\llangle w_{1}\lambda x_{m+1}w_{2}' \rrangle_{c}$, and $\llangle w_{1}x_{m+2}w_{2}' \rrangle_{c}$ (or $\llangle w_{1}x_{m}w_{2}' \rrangle_{c}$, $\llangle w_{1}x_{m-1} \lambda w_{2}' \rrangle_{c}$, and $\llangle w_{1}x_{m-2}w_{2}' \rrangle_{c}$) with $w_{2}' = \lambda^{n}(x_{c+1})^{\varepsilon}(x_{c})^{k}$. For the first equality, we consider the following cases: (i) $w_{1} = \lambda^{n_{1}}$ and (ii) $w_{1} = w_{1}'x_{m_{1}}\lambda^{n_{1}}$, where $w_{1}' \in \Gamma$.

For case i), if $\varepsilon = 0$, then $w = \lambda^{n_{1}}x_{m}\lambda^{n}(x_{c})^{k}$ and \eqref{eqn:xm_rel_1} follows immediately by part g) in the definition of $\llangle \cdot \rrangle_{c}$. For \eqref{eqn:xm_rel_2}, using inductively g), one shows that 
\begin{eqnarray*}
\llangle \lambda^{n_{1}}x_{m}\lambda^{n}(x_{c})^{k} \rrangle_{c} &=& \sum_{i=0}^{n_{1}} A^{n_{1}-2i} \binom{n_{1}}{i} \llangle x_{m+n_{1}-2i} \lambda^{n}(x_{c})^{k} \rrangle_{c}, \\
\llangle \lambda^{n_{1}}x_{m-1}\lambda^{n+1}(x_{c})^{k} \rrangle_{c} &=& \sum_{i=0}^{n_{1}} A^{n_{1}-2i} \binom{n_{1}}{i} \llangle x_{m-1+n_{1}-2i} \lambda^{n+1}(x_{c})^{k} \rrangle_{c}, \\
\llangle \lambda^{n_{1}}x_{m-2}\lambda^{n}(x_{c})^{k} \rrangle_{c} &=& \sum_{i=0}^{n_{1}} A^{n_{1}-2i} \binom{n_{1}}{i} \llangle x_{m-2+n_{1}-2i} \lambda^{n}(x_{c})^{k} \rrangle_{c}.
\end{eqnarray*}
Since by either part h) or i) in the definition of $\llangle \cdot \rrangle_{c}$,
\begin{equation*}
\llangle x_{m-1+n_{1}-2i} \lambda^{n+1}(x_{c})^{k} \rrangle_{c} = A\llangle x_{m-2+n_{1}-2i} \lambda^{n}(x_{c})^{k} \rrangle_{c} + A^{-1}\llangle x_{m+n_{1}-2i} \lambda^{n}(x_{c})^{k} \rrangle_{c}
\end{equation*}
for $i = 0, 1, \ldots, n_{1}$, we see that
\begin{equation*}
\llangle \lambda^{n_{1}}x_{m}\lambda^{n}(x_{c})^{k} \rrangle_{c} = A\llangle \lambda^{n_{1}}x_{m-1}\lambda^{n+1}(x_{c})^{k} \rrangle_{c} - A^{2}\llangle \lambda^{n_{1}}x_{m-2}\lambda^{n}(x_{c})^{k} \rrangle_{c}.
\end{equation*}
Therefore, \eqref{eqn:xm_rel_2} follows.

If $\varepsilon = 1$, then $w = \lambda^{n_{1}}x_{m}\lambda^{n}x_{c+1}(x_{c})^{k}$, and to prove \eqref{eqn:xm_rel_1} we see that by part f) of the definition of $\llangle \cdot \rrangle_{c}$,
\begin{eqnarray*}
\llangle \lambda^{n_{1}}x_{m}\lambda^{n}x_{c+1}(x_{c})^{k} \rrangle_{c} &=& -A^{-1} \llangle \lambda^{n_{1}+1} P_{c-m,n}(x_{c})^{k} \rrangle_{c} + 2 \llangle \lambda^{n_{1}}P_{c-m-1,n}(x_{c})^{k} \rrangle_{c} \\
&+&  A^{-2} \llangle \lambda^{n_{1}}x_{m+1}\lambda^{n}(x_{c})^{k+1} \rrangle_{c}, \\
\llangle \lambda^{n_{1}+1}x_{m+1}\lambda^{n}x_{c+1}(x_{c})^{k} \rrangle_{c} &=& -A^{-1} \llangle \lambda^{n_{1}+2} P_{c-m-1,n}(x_{c})^{k} \rrangle_{c} + 2 \llangle \lambda^{n_{1}+1}P_{c-m-2,n}(x_{c})^{k} \rrangle_{c} \\
&+& A^{-2} \llangle \lambda^{n_{1}+1}x_{m+2}\lambda^{n}(x_{c})^{k+1} \rrangle_{c}, \\
\llangle \lambda^{n_{1}}x_{m+2}\lambda^{n}x_{c+1}(x_{c})^{k} \rrangle_{c} &=& -A^{-1} \llangle \lambda^{n_{1}+1} P_{c-m-2,n}(x_{c})^{k} \rrangle_{c} + 2 \llangle \lambda^{n_{1}}P_{c-m-3,n}(x_{c})^{k} \rrangle_{c} \\
&+& A^{-2} \llangle \lambda^{n_{1}}x_{m+3}\lambda^{n}(x_{c})^{k+1} \rrangle_{c}.
\end{eqnarray*}
Therefore, using \eqref{eqn:Pnk_1} and the previous case, it follows that
\begin{equation*}
\llangle \lambda^{n_{1}}x_{m}\lambda^{n}x_{c+1}(x_{c})^{k} \rrangle_{c} = A\llangle \lambda^{n_{1}+1}x_{m+1}\lambda^{n}x_{c+1}(x_{c})^{k} \rrangle_{c} - A^{2}\llangle \lambda^{n_{1}}x_{m+2}\lambda^{n}x_{c+1}(x_{c})^{k} \rrangle_{c}.
\end{equation*}
Using analogous arguments and \eqref{eqn:Pnk} instead \eqref{eqn:Pnk_1}, one shows that \eqref{eqn:xm_rel_2} also holds.

For case ii), when $\varepsilon = 0$, then $w = w_{1}x_{m}\lambda^{n}(x_{c})^{k}$ and \eqref{eqn:xm_rel_2} follows from part c) in the definition of $\llangle \cdot \rrangle_{c}$. For \eqref{eqn:xm_rel_1}, we can use similar reasoning as for case i) and use part c) in the definition of $\llangle \cdot \rrangle_{c}$ recursively,  
\begin{eqnarray*}
\llangle w_{1}x_{m}\lambda^{n}(x_{c})^{k} \rrangle_{c} &=& \sum_{i=0}^{n} A^{n-2i} \binom{n}{i} \llangle w_{1}x_{m-n+2i}(x_{c})^{k} \rrangle_{c}, \\
\llangle w_{1}\lambda x_{m+1}\lambda^{n}(x_{c})^{k} \rrangle_{c} &=& \sum_{i=0}^{n} A^{n-2i} \binom{n}{i} \llangle w_{1}\lambda x_{m+1-n+2i}(x_{c})^{k} \rrangle_{c}, \\
\llangle w_{1}x_{m+2}\lambda^{n}(x_{c})^{k} \rrangle_{c} &=& \sum_{i=0}^{n} A^{n-2i} \binom{n}{i} \llangle w_{1}x_{m+2-n+2i} (x_{c})^{k} \rrangle_{c}.
\end{eqnarray*}
Since by either part d) or e) in the definition of $\llangle \cdot \rrangle_{c}$,
\begin{equation*}
\llangle w_{1}\lambda x_{m+1-n+2i}(x_{c})^{k} \rrangle_{c} = A\llangle w_{1}x_{m+2-n+2i}(x_{c})^{k} \rrangle_{c} + A^{-1}\llangle w_{1}x_{m-n+2i}(x_{c})^{k} \rrangle_{c}
\end{equation*}
for $i = 0, 1, \ldots, n$, we see that
\begin{equation*}
\llangle w_{1}x_{m}\lambda^{n}(x_{c})^{k} \rrangle_{c} = A\llangle w_{1}\lambda x_{m+1}\lambda^{n}(x_{c})^{k} \rrangle_{c} - A^{2}\llangle w_{1}x_{m+2}\lambda^{n}(x_{c})^{k} \rrangle_{c}.
\end{equation*}
Therefore, \eqref{eqn:xm_rel_1} follows.

If $\varepsilon = 1$, then $w = w_{1}x_{m}\lambda^{n}x_{c+1}(x_{c})^{k}$. Equations \eqref{eqn:xm_rel_1} and \eqref{eqn:xm_rel_2} follow by taking $w_{1}$ instead of $\lambda^{n_{1}}$ in our argument used for case i) with $\varepsilon = 1$.
\end{proof}

\begin{definition}
\label{def:PolynomialsQ_n}
A family of polynomials $\{Q_{n}\}_{n \in \mathbb{Z}}$ in $\mathbb{Z}[\lambda]$ is determined by recursion
\begin{equation*}
Q_{0} = 0, \quad Q_{1} = 1, \quad \text{and} \quad Q_{n+2} = \lambda Q_{n+1} - Q_{n}
\end{equation*}
for $n \geq 0$, and $Q_{n} = -Q_{-n}$ for $n < 0$.
\end{definition}
We note that for $Q_{n}$ with $n \neq 0$, its degree $\deg (Q_{n}) = |n|-1$ and its leading coefficient is $1$ if $n > 0$.

\begin{lemma}
\label{lem:rel_Pn}
For any $n \in \mathbb{Z}$,
\begin{equation}
\label{eqn:rel_Pn}
P_{n} = -A^{n+2} Q_{n+1} + A^{n-2} Q_{n-1}.
\end{equation}
\end{lemma}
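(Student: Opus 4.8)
The plan is to recognize that the right-hand side of \eqref{eqn:rel_Pn} satisfies the same second-order linear recursion as $P_n$, and then to pin both sequences down by matching two consecutive values. Write $R_n := -A^{n+2}Q_{n+1} + A^{n-2}Q_{n-1}$; the goal is to prove $P_n = R_n$ for all $n \in \mathbb{Z}$.

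First I would record the two facts that make a two-sided induction work. From \eqref{eqn:Pn} the sequence $(P_n)$ satisfies $P_n = A\lambda P_{n-1} - A^2 P_{n-2}$, and since $A$ is invertible this recursion can also be run backwards as $P_{n-2} = A^{-1}\lambda P_{n-1} - A^{-2}P_n$; hence $(P_n)_{n\in\mathbb{Z}}$ is determined by any two consecutive values. Similarly, from Definition~\ref{def:PolynomialsQ_n} together with the symmetry $Q_n = -Q_{-n}$, I would first check that the three-term relation $Q_m = \lambda Q_{m-1} - Q_{m-2}$ holds for \emph{every} $m \in \mathbb{Z}$: the defining recursion gives it for $m \geq 2$, while a direct check at $m = 0$ and $m = -1$ (using $Q_{-1} = -1$, $Q_{-2} = -\lambda$, $Q_{-3} = 1 - \lambda^2$) lets the odd symmetry propagate it to all negative indices. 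This is the one place where a little care is needed, since the definition only states the recursion for nonnegative indices.

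Next I would verify the base cases. At $n = 0$, using $Q_1 = 1$ and $Q_{-1} = -1$ gives $R_0 = -A^2 - A^{-2} = P_0$; at $n = 1$, using $Q_2 = \lambda$ and $Q_0 = 0$ gives $R_1 = -A^3\lambda = P_1$.

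Finally I would show that $R_n$ obeys the $P$-recursion. Substituting the definitions of $R_{n-1}$ and $R_{n-2}$ into $A\lambda R_{n-1} - A^2 R_{n-2}$ and collecting the two powers of $A$ yields
\[
A\lambda R_{n-1} - A^2 R_{n-2} = -A^{n+2}\bigl(\lambda Q_n - Q_{n-1}\bigr) + A^{n-2}\bigl(\lambda Q_{n-2} - Q_{n-3}\bigr),
\]
and applying the three-term relation for $Q$ (valid for all integer indices by the first step) to rewrite $\lambda Q_n - Q_{n-1} = Q_{n+1}$ and $\lambda Q_{n-2} - Q_{n-3} = Q_{n-1}$ gives exactly $R_n$. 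Thus $R_n$ and $P_n$ satisfy the same second-order recursion and agree at $n = 0, 1$, so they coincide for all $n \in \mathbb{Z}$ by induction in both directions. The only genuine subtlety is the extension of the $Q$-recursion past the nonnegative range; everything else is a direct algebraic verification.
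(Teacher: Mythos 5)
Your proposal follows essentially the same route as the paper: verify the base cases $n=0,1$ and then run the recursion $P_{n} = A\lambda P_{n-1} - A^{2}P_{n-2}$ as a two-sided induction, using the three-term relation for the $Q_{n}$; the paper simply leaves implicit the point you correctly isolate as the only subtlety, namely that the $Q$-recursion must be known at \emph{all} integer indices. However, your justification of that extension has a pinhole: your enumeration covers $m \geq 2$ (defining recursion), $m = 0$, $m = -1$ (direct checks), and all negative $m$ (odd symmetry), but it skips $m = 1$, and that is precisely an index your induction uses --- verifying $R_{n} = A\lambda R_{n-1} - A^{2}R_{n-2}$ at $n$ requires the $Q$-relation at indices $n+1$ and $n-1$, so $m=1$ enters at $n=2$ in the forward direction and at $n=-2$ in the backward direction. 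Ironically, under the symmetry $Q_{j} = -Q_{-j}$ the relation at index $m$ is equivalent to the relation at index $2-m$, so your direct checks at $m=0,-1$ are actually redundant (they follow from the $m\geq 2$ cases), while $m=1$ is the unique self-dual index that genuinely needs a direct verification: $\lambda Q_{0} - Q_{-1} = 0 + 1 = Q_{1}$. With that one-line check added, your argument is complete.
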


\begin{proof}
We see that by the definition of $P_{n}$,
\begin{equation*}
P_{0} = -A^{2}-A^{-2} = -A^{2}Q_{1} + A^{-2}Q_{-1} \quad \text{and} \quad P_{1} = -A^{3}\lambda = -A^{3}Q_{2}+A^{-1}Q_{0}.   
\end{equation*}
Since $P_{n} = AP_{n-1}\lambda -A^{2}P_{n-2}$, \eqref{eqn:rel_Pn} follows by induction on $n$ for both $n > 1$ and $n < 0$ and Definition~\ref{def:PolynomialsQ_n}. 
\end{proof}

\begin{lemma}
\label{lem:rel_xm}
For any $k \in \mathbb{Z}$ and $w_{1}x_{m}w_{2} \in \Gamma$ with $m \in \mathbb{Z}$,
\begin{equation}
\label{eqn:rel_xm_1}
\llangle w_{1}x_{m}w_{2} \rrangle_{c} = -A^{m-k}\llangle w_{1}x_{k}Q_{m-k-1}w_{2} \rrangle_{c} + A^{m-k-1}\llangle w_{1}x_{k+1}Q_{m-k}w_{2} \rrangle_{c}
\end{equation}
and
\begin{equation}
\label{eqn:rel_xm_2}
\llangle w_{1}x_{m}w_{2} \rrangle_{c} = -A^{k-m}\llangle w_{1}Q_{m-k-1}x_{k}w_{2} \rrangle_{c} + A^{k-m+1}\llangle w_{1}Q_{m-k}x_{k+1}w_{2} \rrangle_{c}.
\end{equation}
\end{lemma}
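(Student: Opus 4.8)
The plan is to prove both identities by induction on $|m-k|$, fixing $k$ but keeping the tail $w_2$ arbitrary, so that at each step one may freely replace $w_2$ by $\lambda w_2$. Recall that $\llangle w_{1}x_{k}Q_{j}w_{2} \rrangle_{c}$ is shorthand for $\sum_{i} q_{i}\llangle w_{1}x_{k}\lambda^{i}w_{2} \rrangle_{c}$ when $Q_{j} = \sum_{i}q_{i}\lambda^{i}$. The crucial bookkeeping observation is that inserting one extra $\lambda$-curve on the outer side of $x_{k}$ (respectively, the inner side, for \eqref{eqn:rel_xm_2}) amounts exactly to multiplying the coefficient polynomial by $\lambda$, that is $\llangle w_{1}x_{k}Q_{j}\,\lambda w_{2} \rrangle_{c} = \llangle w_{1}x_{k}(\lambda Q_{j})w_{2} \rrangle_{c}$. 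This is precisely why the statement must be carried for all $w_2$ simultaneously.

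For the base cases $m = k$ and $m = k+1$, both identities reduce to tautologies after substituting the values $Q_{-1} = -1$, $Q_{0} = 0$, and $Q_{1} = 1$ from Definition~\ref{def:PolynomialsQ_n}: at $m = k$ the $x_{k+1}$-coefficient vanishes because $Q_{0} = 0$, while the $x_{k}$-coefficient is $-A^{0}Q_{-1} = 1$; at $m = k+1$ the $x_{k}$-coefficient vanishes and the $x_{k+1}$-coefficient equals $Q_{1} = 1$.

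For \eqref{eqn:rel_xm_1} I would use the outer-$\lambda$ local relation \eqref{eqn:xm_rel_2} of Lemma~\ref{lem:loc_prop_bracket_c}. To push the induction upward (from indices $m-1, m-2$ to $m$), write $\llangle w_{1}x_{m}w_{2} \rrangle_{c} = A\llangle w_{1}x_{m-1}\lambda w_{2} \rrangle_{c} - A^{2}\llangle w_{1}x_{m-2}w_{2} \rrangle_{c}$, then apply the inductive hypothesis to $\llangle w_{1}x_{m-1}\lambda w_{2} \rrangle_{c}$ (with tail $\lambda w_{2}$, which multiplies each $Q$ by $\lambda$) and to $\llangle w_{1}x_{m-2}w_{2} \rrangle_{c}$. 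Collecting the $x_{k}$- and $x_{k+1}$-terms and applying the three-term recursion $Q_{n+1} = \lambda Q_{n} - Q_{n-1}$ collapses the coefficients to $-A^{m-k}Q_{m-k-1}$ and $A^{m-k-1}Q_{m-k}$, as required. The downward direction (from $m+1, m+2$ to $m$) is identical once \eqref{eqn:xm_rel_2} is rearranged to solve for the lowest index, and the convention $Q_{-n} = -Q_{n}$ keeps the same algebra valid for negative $m-k$. Identity \eqref{eqn:rel_xm_2} follows from the mirror argument, using the inner-$\lambda$ relation \eqref{eqn:xm_rel_1} and inserting $\lambda$ on the inner side of $x_{k}$.

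The main obstacle is organizational rather than conceptual: one must arrange the induction so that $w_{2}$ is genuinely arbitrary (permitting the substitution $w_{2}\mapsto \lambda w_{2}$), keep straight that an inserted $\lambda$ corresponds to multiplying the coefficient $Q_{j}$ by $\lambda$, and verify that the recursion of Definition~\ref{def:PolynomialsQ_n} telescopes the cross terms with the correct powers of $A$ and signs. Once this dictionary between $\lambda$-insertion and the $Q$-recursion is in place, both identities follow from a single uniform computation.
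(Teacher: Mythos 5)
Your proposal is correct and follows essentially the same route as the paper: the paper's (very terse) proof likewise verifies the base cases $m = k, k+1$ and then inducts on $m$ in both directions using the local relations \eqref{eqn:xm_rel_1} and \eqref{eqn:xm_rel_2} of Lemma~\ref{lem:loc_prop_bracket_c} together with the recursion in Definition~\ref{def:PolynomialsQ_n}. Your write-up simply makes explicit the details the paper leaves to the reader, in particular the key bookkeeping point that applying the inductive hypothesis with tail $\lambda w_{2}$ corresponds to multiplying the coefficient polynomial $Q_{j}$ by $\lambda$, so that the three-term $Q$-recursion collapses the cross terms.
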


\begin{proof}
Clearly, for any fixed $k$, both \eqref{eqn:rel_xm_1} and \eqref{eqn:rel_xm_2} hold when $m = k, k+1$. Using Lemma~\ref{lem:loc_prop_bracket_c}, one also shows that \eqref{eqn:rel_xm_1} and \eqref{eqn:rel_xm_2} hold for both $m > k+1$ and $m < k$ by induction on $m$ and Definition~\ref{def:PolynomialsQ_n}. 
\end{proof}

\begin{lemma}
\label{lem:annulus_for_any_kn}
Let $\Delta_{t}^{+},\Delta_{t}^{-},\Delta_{x}^{+},\Delta_{x}^{-}$ be finite subsets of $R \times \Gamma \times \Gamma \times \mathbb{Z}$, and define
\begin{equation*}
\Theta_{t}^{+}(k,n) = \sum_{(r,w_{1},w_{2},v) \in \Delta_{t}^{+}} r\llangle w_{1}P_{n+v,k}w_{2} \rrangle_{c}, \quad
\Theta_{t}^{-}(k,n) = \sum_{(r,w_{1},w_{2},v) \in \Delta_{t}^{-}} r\llangle w_{1}P_{-n+v}\lambda^{k}w_{2} \rrangle_{c},
\end{equation*}
\begin{equation*}
\Theta_{x}^{+}(k,n) = \sum_{(r,w_{1},w_{2},v) \in \Delta_{x}^{+}} r\llangle w_{1}\lambda^{k}x_{n+v}w_{2} \rrangle_{c}, \quad
\Theta_{x}^{-}(k,n) = \sum_{(r,w_{1},w_{2},v) \in \Delta_{x}^{-}} r\llangle w_{1}x_{-n+v}\lambda^{k}w_{2} \rrangle_{c},
\end{equation*}
and 
\begin{equation*}
\Theta_{t,x}(k,n) = \Theta_{t}^{+}(k,n) + \Theta_{t}^{-}(k,n) + \Theta_{x}^{+}(k,n)+ \Theta_{x}^{-}(k,n).
\end{equation*}
If either \textup{(1)} $\Theta_{t,x}(0,n) = 0$ for all $n \in \mathbb{Z}$ or \textup{(2)} $\Theta_{t,x}(k,n_{0}) = \Theta_{t,x}(k,n_{0}+1) = 0$ for all $k \geq 0$ and a fixed $n_{0} \in \mathbb{Z}$, then $\Theta_{t,x}(k,n) = 0$ for any $k \geq 0$ and $n \in \mathbb{Z}$.
\end{lemma}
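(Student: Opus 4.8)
The plan is to show that all four families $\Theta_t^{+},\Theta_t^{-},\Theta_x^{+},\Theta_x^{-}$, and hence their sum $\Theta_{t,x}$, satisfy one and the same two-step recursion, which I will denote $(R)$:
\begin{equation*}
\Theta(k+1,n) = A\,\Theta(k,n+1) + A^{-1}\,\Theta(k,n-1),\qquad k\geq 0,\ n\in\mathbb{Z}.
\end{equation*}
Once $(R)$ is established for $\Theta_{t,x}$, both implications follow from short inductions, case (1) climbing in $k$ and case (2) spreading in $n$.

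First I would derive $(R)$ for each family from the identities already in hand. For $\Theta_t^{+}$ it is immediate from \eqref{eqn:Pnk}: substituting $n\mapsto n+v$ in $P_{n+v,k}=AP_{n+v+1,k-1}+A^{-1}P_{n+v-1,k-1}$ and summing over $\Delta_t^{+}$ gives $\Theta_t^{+}(k,n)=A\Theta_t^{+}(k-1,n+1)+A^{-1}\Theta_t^{+}(k-1,n-1)$, which is $(R)$ after a shift of $k$. For the other three the computation instead produces the equivalent form
\begin{equation*}
\Theta(k,n) = A\,\Theta(k+1,n+1) - A^{2}\,\Theta(k,n+2),
\end{equation*}
which rearranges to $(R)$ by solving for $\Theta(k+1,n+1)$ and shifting $n$. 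Indeed, for $\Theta_x^{+}$ one applies \eqref{eqn:xm_rel_1} to $w_{1}\lambda^{k}x_{n+v}w_{2}$, the new $\lambda$ merging with $\lambda^{k}$ into $\lambda^{k+1}$; for $\Theta_x^{-}$ one applies \eqref{eqn:xm_rel_2} to $w_{1}x_{-n+v}\lambda^{k}w_{2}$ in the same way; and for $\Theta_t^{-}$ one multiplies $P_{j}=A\lambda P_{j-1}-A^{2}P_{j-2}$ (with $j=-n+v$, from \eqref{eqn:Pn}) by $\lambda^{k}$ and again absorbs $\lambda\cdot\lambda^{k}=\lambda^{k+1}$. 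In each case lowering the $x$- or $P$-index by $1$ raises $n$ by $1$ (the $+n$ convention in the plus-families and the $-n$ in the minus-families being exactly compensated), so all four recursions coincide, and $(R)$ passes to $\Theta_{t,x}$ by summation.

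With $(R)$ in place the two cases are formal. Under hypothesis (1) the row $k=0$ vanishes, and $(R)$ shows that if row $k$ vanishes then so does row $k+1$; by induction on $k$ we get $\Theta_{t,x}(k,n)=0$ for all $k\geq 0$ and $n\in\mathbb{Z}$. Under hypothesis (2) I would propagate horizontally, maintaining the invariant that a given column vanishes for \emph{every} $k$. Rearranging $(R)$ gives the two relations
\begin{equation*}
\Theta(k,n+1) = A^{-1}\Theta(k+1,n) - A^{-2}\Theta(k,n-1)
\quad\text{and}\quad
\Theta(k,n-1) = A\,\Theta(k+1,n) - A^{2}\,\Theta(k,n+1).
\end{equation*}
Starting from the seed columns $n_{0}$ and $n_{0}+1$ (each vanishing for all $k$ by assumption), the first relation propagates vanishing rightward and the second leftward: at each step $\Theta_{t,x}(k+1,n)$ is zero because the current column $n$ already vanishes for every $k$ (in particular at level $k+1$), and the other term is zero by the inductive hypothesis on columns. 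Hence every column vanishes for all $k$.

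The only substantive point is the uniform derivation of $(R)$. The four families are written in visibly different shapes, so one must verify that the stray $\lambda$ generated by each of \eqref{eqn:Pnk}, \eqref{eqn:Pn}, \eqref{eqn:xm_rel_1}, \eqref{eqn:xm_rel_2} is precisely the factor that converts $\lambda^{k}$ into $\lambda^{k+1}$ (equivalently, row $k$ into row $k+1$), and that the index shifts align with the $\pm n$ conventions so that all four collapse to the single recursion. Once this matching is checked, the two inductions require no further computation.
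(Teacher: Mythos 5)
Your proposal is correct and follows essentially the same route as the paper: the paper likewise derives, from \eqref{eqn:Pnk}, \eqref{eqn:Pnk_1}, \eqref{eqn:xm_rel_1}, and \eqref{eqn:xm_rel_2}, the common recursion $\Theta_{t,x}(k,n) = A\,\Theta_{t,x}(k-1,n+1) + A^{-1}\,\Theta_{t,x}(k-1,n-1)$ (your $(R)$) and its rearrangement $\Theta_{t,x}(k,n) = A^{-1}\Theta_{t,x}(k+1,n-1) - A^{-2}\Theta_{t,x}(k,n-2)$, then runs the same induction on $k$ for case (1) and the same bidirectional column induction on $n$ for case (2). Your only departures are presentational: you unify the four recursions into a single named relation before starting the inductions, and you write out the leftward propagation explicitly where the paper appeals to symmetry.
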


\begin{proof}
We prove 1) by using induction on $k$. Clearly, for all $n$, $\Theta_{t,x}(0,n) = 0$ by assumption. Fix $k-1 \geq 0$ and assume that $\Theta_{t,x}(k-1,n) = 0$ for any $n$. Since by \eqref{eqn:Pnk}, \eqref{eqn:Pnk_1}, \eqref{eqn:xm_rel_1}, and \eqref{eqn:xm_rel_2} respectively,
\begin{eqnarray*}
\Theta_{t}^{+}(k,n) &=& A\Theta_{t}^{+}(k-1,n+1) + A^{-1}\Theta_{t}^{+}(k-1,n-1), \\
\Theta_{t}^{-}(k,n) &=& A\Theta_{t}^{-}(k-1,n+1) + A^{-1}\Theta_{t}^{-}(k-1,n-1), \\
\Theta_{x}^{+}(k,n) &=& A\Theta_{x}^{+}(k-1,n+1) + A^{-1}\Theta_{x}^{+}(k-1,n-1), \\
\Theta_{x}^{-}(k,n) &=& A\Theta_{x}^{-}(k-1,n+1) + A^{-1}\Theta_{x}^{-}(k-1,n-1), 
\end{eqnarray*}
it follows that
\begin{equation*}
\Theta_{t,x}(k,n) = A\Theta_{t,x}(k-1,n+1) + A^{-1}\Theta_{t,x}(k-1,n-1) = 0.
\end{equation*}
We prove 2) by induction on $n$ for both $n > n_{0}+1$ and $n < n_{0}$. Clearly, using the induction assumption $\Theta_{t,x}(k,n_{0}) = \Theta_{t,x}(k,n_{0}+1) = 0$ for all $k$. Fix $n > n_{0}+1$ and assume that $\Theta_{t,x}(k,n-1) = \Theta_{t,x}(k,n-2) = 0$ for any $k$. Since by \eqref{eqn:Pnk}, \eqref{eqn:Pnk_1}, \eqref{eqn:xm_rel_1}, and \eqref{eqn:xm_rel_2} respectively,
\begin{eqnarray*}
\Theta_{t}^{+}(k,n) &=& A^{-1}\Theta_{t}^{+}(k+1,n-1) - A^{-2}\Theta_{t}^{+}(k,n-2), \\
\Theta_{t}^{-}(k,n) &=& A^{-1}\Theta_{t}^{-}(k+1,n-1) - A^{-2}\Theta_{t}^{-}(k,n-2), \\
\Theta_{x}^{+}(k,n) &=& A^{-1}\Theta_{x}^{+}(k+1,n-1) - A^{-2}\Theta_{x}^{+}(k,n-2), \\
\Theta_{x}^{-}(k,n) &=& A^{-1}\Theta_{x}^{-}(k+1,n-1) - A^{-2}\Theta_{x}^{-}(k,n-2), 
\end{eqnarray*}
it follows that
\begin{equation*}
\Theta_{t,x}(k,n) = A^{-1}\Theta_{t,x}(k+1,n-1) - A^{-2}\Theta_{t,x}(k,n-2) = 0.
\end{equation*}
Similar arguments can be used when $n < n_{0}$.
\end{proof}

Given an arrow diagram $D$ in ${\bf A}^{2}$, using $\llangle \cdot \rrangle$ we express $D$ as a $R$-linear combination of arrow diagrams $w$ without crossings. Therefore, using $\llangle \cdot \rrangle_{\Gamma}$ allows us to express each $w$ (hence entire linear combination) as a $R$-linear combination of elements $u\in \Gamma$. Finally, using $\llangle \cdot \rrangle_{c}$ allows us to express each $u$ (hence entire linear combination of $u$'s) as a $R$-linear combination of elements of $\Sigma_{c}$. Therefore, for each arrow diagram $D$ in ${\bf A}^{2}$ we are able to assign a $R$-linear combination of elements of $\Sigma_{c}$, i.e., for each $D$, we define
\begin{equation*}
\psi_{c}(D) =  \llangle \llangle \llangle D \rrangle \rrangle_{\Gamma} \rrangle_{c}\in R\Sigma_{c}.
\end{equation*}
It is clear that, for arrow diagrams $D$ and $D'$ related by a finite number of $\Omega_{1}-\Omega_{4}$-moves
\begin{equation*}
\psi_{c}(D) = \psi_{c}(D')
\end{equation*}
and, for any skein triple $D_{+}$, $D_{0}$, $D_{\infty}$, and disjoint unions $D\sqcup T_{1}$ (see Figure~\ref{fig:SkeinTripleOfDiagrams}),
\begin{equation*}
\psi_{c}(D_{+} - A D_{0} - A^{-1} D_{\infty}) = 0\,\,\text{and}\,\,\psi_{c}( D\sqcup T_{1}  + (A^{2}+A^{-2}) D) = 0,
\end{equation*}
where $T_{1}$ is a trivial circle on ${\bf A}^{2}$. We will show that $\psi_{c}$ defined above gives a surjective homomorphism of free $R$-modules $\psi_{c}: R\mathcal{D}({\bf A}^{2}) \to R\Sigma_{c}$. It suffices to show that for arrow diagrams $D$ and $D'$ related by a $\Omega_{5}$-move, $\psi_{c}(D-D') = 0$.

As a first step, we consider special cases of arrow diagrams in ${\bf A}^{2}$ with one crossing (see Figure~\ref{fig:annulus_type3}, Figure~\ref{fig:annulus_type4}, and Figure~\ref{fig:annulus_type5}) that are related by $\Omega_{5}$-move. Assume that $D = w_{1}Ww_{2}$, $D'= w_{1}W'w_{2}$ with $w_{1},w_{2}\in \Gamma$ and $W$ and $W'$ related by an $\Omega_{5}$-move. For such diagrams
\begin{equation}
\label{eqn:hDD'}
\psi_{c}(D-D') = \psi_{c}(w_{1}(W - W')w_{2}) = \llangle w_{1}\llangle A\langle W_{+}\rangle + A^{-1}\langle W_{-}\rangle - A\langle W'_{+}\rangle-A^{-1}\langle W'_{-}\rangle \rrangle_{\Gamma} w_{2}\rrangle_{c},  
\end{equation}
where $W_{+},W_{-},W'_{+}$, and $W'_{-}$ are arrow diagrams obtained from $W$ and $W'$ by smoothing the crossing of each according to positive and negative markers. 

\begin{figure}[H]
\centering
\includegraphics[scale=0.75]{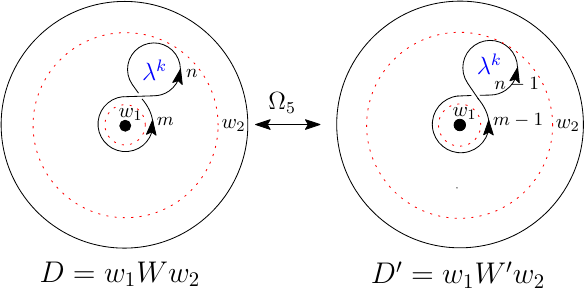}
\caption{Arrow diagrams $D$ and $D'$ in ${\bf A}^{2}$ related by an $\Omega_{5}$-move}
\label{fig:annulus_type3}
\end{figure}

For arrow diagrams $D$, $D'$ in Figure~\ref{fig:annulus_type3}, we see that $W_{+} = \lambda^{k}x_{m+n}$, $W_{-} = x_{m}t_{n,k}$, $W'_{+} = x_{m-1}t_{n-1,k}$, and $W'_{-} = \lambda^{k}x_{m+n-2}$ are obtained by smoothing the crossing of $W$ and $W'$ according to positive and negative markers. Hence, by \eqref{eqn:hDD'}, $\psi_{c}(D-D') = 0$ is equivalent to \eqref{eqn:H3_InvariantOmega5_3}.

\begin{lemma}
\label{lem:annulus_type3}
Let $w_{1},w_{2}\in \Gamma$, then for all $m,n\in \mathbb{Z}$ and $k\geq 0$,
\begin{equation}
\label{eqn:H3_InvariantOmega5_3}
\llangle w_{1}(A\lambda^{k}x_{m+n} +A^{-1}x_{m}P_{n,k} - A x_{m-1}P_{n-1,k} -A^{-1}\lambda^{k}x_{m+n-2})w_{2}\rrangle_{c} = 0.
\end{equation}
\end{lemma}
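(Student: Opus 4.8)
The plan is to recognize the left-hand side of \eqref{eqn:H3_InvariantOmega5_3} as a special instance of the quantity $\Theta_{t,x}(k,n)$ from Lemma~\ref{lem:annulus_for_any_kn} and then invoke that lemma's propagation mechanism. Fixing $m$, $w_1$, $w_2$ and regarding the bracket as a function of $(k,n)$, I would take $\Delta_{t}^{-} = \Delta_{x}^{-} = \emptyset$ and set
\[
\Delta_{t}^{+} = \{(A^{-1}, w_1 x_m, w_2, 0),\ (-A, w_1 x_{m-1}, w_2, -1)\}, \qquad
\Delta_{x}^{+} = \{(A, w_1, w_2, m),\ (-A^{-1}, w_1, w_2, m-2)\}.
\]
With this data the term $A^{-1}\llangle w_1 x_m P_{n,k} w_2\rrangle_c$ is the $v=0$ summand of $\Theta_{t}^{+}$, the term $-A\llangle w_1 x_{m-1}P_{n-1,k}w_2\rrangle_c$ is its $v=-1$ summand, and the two $\lambda^k x$-terms are the $v=m$ and $v=m-2$ summands of $\Theta_{x}^{+}$; hence the left-hand side of \eqref{eqn:H3_InvariantOmega5_3} equals $\Theta_{t,x}(k,n)$. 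Here I use that $w_1 x_m, w_1 x_{m-1} \in \Gamma$, so the templates are legitimately populated.

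By Lemma~\ref{lem:annulus_for_any_kn}(1) it then suffices to verify the base case $\Theta_{t,x}(0,n) = 0$ for every $n \in \mathbb{Z}$. At $k = 0$ we have $\lambda^{0} = 1$ and $P_{n,0} = P_n$, so the identity collapses to
\[
\llangle w_1\bigl(A x_{m+n} + A^{-1}x_m P_n - A x_{m-1}P_{n-1} - A^{-1}x_{m+n-2}\bigr)w_2\rrangle_c = 0,
\]
an identity involving only $x$-curves and the polynomials $P_n \in R[\lambda]$.

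To prove this base case I would reduce every $x$-index to the common base $\{x_{m-1}, x_m\}$ using \eqref{eqn:rel_xm_1} of Lemma~\ref{lem:rel_xm} with $k = m-1$, which rewrites $\llangle w_1 x_{m+n}w_2\rrangle_c$ as $-A^{n+1}\llangle w_1 x_{m-1}Q_n w_2\rrangle_c + A^{n}\llangle w_1 x_m Q_{n+1}w_2\rrangle_c$, and $\llangle w_1 x_{m+n-2}w_2\rrangle_c$ analogously in terms of $Q_{n-2}$ and $Q_{n-1}$. After this substitution the left-hand side becomes $\llangle w_1 x_{m-1}(\cdots)w_2\rrangle_c + \llangle w_1 x_m(\cdots)w_2\rrangle_c$, where each $(\cdots)$ is a fixed element of $R[\lambda]$ built from the $Q_\bullet$ and from $P_n, P_{n-1}$. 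Replacing $P_n$ and $P_{n-1}$ by their $Q$-expansions via Lemma~\ref{lem:rel_Pn}, both polynomial coefficients cancel identically in $R[\lambda]$ for all $n$, giving $\Theta_{t,x}(0,n) = 0$. Lemma~\ref{lem:annulus_for_any_kn} then upgrades this to $\Theta_{t,x}(k,n) = 0$ for all $k \geq 0$ and $n \in \mathbb{Z}$, which is exactly \eqref{eqn:H3_InvariantOmega5_3}.

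I expect the only genuinely delicate step to be the bookkeeping in the first paragraph: checking that the four bracket terms fit the rigid templates of $\Theta_{t}^{+}$ and $\Theta_{x}^{+}$, in particular that the $\lambda^k$ sits immediately to the left of the $x$-curve and that the $P_{n+v,k}$-factor occupies the correct slot, since this alignment is precisely what licenses the recursions \eqref{eqn:Pnk}, \eqref{eqn:Pnk_1}, \eqref{eqn:xm_rel_1}, \eqref{eqn:xm_rel_2} used inside Lemma~\ref{lem:annulus_for_any_kn}. The base-case computation itself, once set up via Lemmas~\ref{lem:rel_Pn} and \ref{lem:rel_xm}, is a short and mechanical cancellation.
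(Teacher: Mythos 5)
Your proposal is correct and takes essentially the same route as the paper: the paper likewise reduces \eqref{eqn:H3_InvariantOmega5_3} to the case $k=0$ via Lemma~\ref{lem:annulus_for_any_kn}(1), and verifies that base case by rewriting $x_{m+n}$ and $x_{m+n-2}$ over the base $\{x_{m-1},x_m\}$ using \eqref{eqn:rel_xm_1} and expanding $P_{n}$, $P_{n-1}$ via \eqref{eqn:rel_Pn}, after which the $Q$-polynomial coefficients cancel. Your explicit packaging of the four terms into the $\Theta_{t}^{+}$, $\Theta_{x}^{+}$ templates (with $w_1x_m, w_1x_{m-1}\in\Gamma$) merely spells out what the paper's appeal to Lemma~\ref{lem:annulus_for_any_kn} leaves implicit.
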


\begin{proof}
To prove \eqref{eqn:H3_InvariantOmega5_3}, it suffices to show that for any fixed $m$,
\begin{equation}
\label{eqn:case_type_iii}
A\llangle w_{1}\lambda^{k}x_{m+n}w_{2} \rrangle_{c} + A^{-1}\llangle w_{1}x_{m}P_{n,k}w_{2} \rrangle_{c} = A\llangle w_{1}x_{m-1}P_{n-1,k}w_{2} \rrangle_{c} + A^{-1}\llangle w_{1}\lambda^{k}x_{m+n-2}w_{2} \rrangle_{c}.
\end{equation}
When $k = 0$, by \eqref{eqn:rel_xm_1} and \eqref{eqn:rel_Pn}, the left hand side of \eqref{eqn:case_type_iii} becomes
\begin{eqnarray*}
A\llangle w_{1}x_{m+n}w_{2} \rrangle_{c} + A^{-1}\llangle w_{1}x_{m}P_{n}w_{2} \rrangle_{c}  
&=& A(-A^{n+1}\llangle w_{1}x_{m-1}Q_{n}w_{2} \rrangle_{c} + A^{n}\llangle w_{1}x_{m}Q_{n+1}w_{2} \rrangle_{c}) \\ 
&+& A^{-1}\llangle w_{1}x_{m}(-A^{n+2} Q_{n+1} + A^{n-2} Q_{n-1})w_{2} \rrangle_{c} \\
&=& -A^{n+2}\llangle w_{1}x_{m-1}Q_{n}w_{2}\rrangle_{c} + A^{n-3}\llangle w_{1}x_{m}Q_{n-1}w_{2} \rrangle_{c} 
\end{eqnarray*}
and the right hand side of \eqref{eqn:case_type_iii} becomes
\begin{eqnarray*}
A\llangle w_{1}x_{m-1}P_{n-1}w_{2} \rrangle_{c} + A^{-1}\llangle w_{1}x_{m+n-2}w_{2} \rrangle_{c}
&=& A\llangle w_{1}x_{m-1}(-A^{n+1} Q_{n} + A^{n-3} Q_{n-2})w_{2} \rrangle_{c} \\
&+& A^{-1}(-A^{n-1}\llangle w_{1}x_{m-1}Q_{n-2}w_{2} \rrangle_{c} + A^{n-2}\llangle w_{1}x_{m}Q_{n-1}w_{2} \rrangle_{c}) \\
&=& -A^{n+2}\llangle w_{1}x_{m-1}Q_{n}w_{2}\rrangle_{c} + A^{n-3}\llangle w_{1}x_{m}Q_{n-1}w_{2} \rrangle_{c}.
\end{eqnarray*}
Hence \eqref{eqn:case_type_iii} holds for $k = 0$ and, by Lemma~\ref{lem:annulus_for_any_kn}, it also holds for any $k > 0$.
\end{proof}

\begin{figure}[H]
\centering
\includegraphics[scale=0.75]{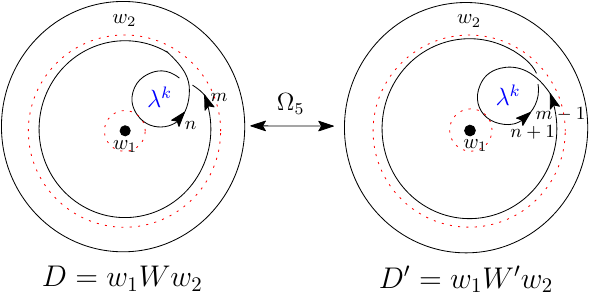}
\caption{Arrow diagrams $D$ and $D'$ in ${\bf A}^{2}$ related by an $\Omega_{5}$-move}
\label{fig:annulus_type4}
\end{figure}

For arrow diagrams $D$, $D'$ in Figure~\ref{fig:annulus_type4}, we see that $W_{+} = t_{n,k}x_{m}$, $W_{-} = x_{m-n}\lambda^{k}$, $W'_{+} = x_{m-n-2}\lambda^{k}$, and $W'_{-} = t_{n+1,k}x_{m-1}$ are obtained by smoothing the crossing of $W$ and $W'$ according to positive and negative markers. Hence, by \eqref{eqn:hDD'}, $\psi_{c}(D-D') = 0$ is equivalent to \eqref{eqn:H3_InvariantOmega5_4}.

\begin{lemma}
\label{lem:annulus_type4}
Let $w_{1},w_{2}\in \Gamma$, then for all $m,n\in \mathbb{Z}$ and $k\geq 0$,
\begin{equation}
\label{eqn:H3_InvariantOmega5_4}
\llangle w_{1}(A P_{n,k}x_{m} +A^{-1} x_{m-n}\lambda^{k} -A x_{m-n-2}\lambda^{k}-A^{-1}P_{n+1,k}x_{m-1} )w_{2}\rrangle_{c} = 0.
\end{equation}
\end{lemma}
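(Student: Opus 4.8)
The plan is to mirror the proof of Lemma~\ref{lem:annulus_type3}, exploiting the fact that \eqref{eqn:H3_InvariantOmega5_4} is the ``left–right reflected'' analogue of \eqref{eqn:H3_InvariantOmega5_3}: there the $x$-curves sit to the \emph{left} of the polynomial factors $P_{n,k}$ and $\lambda^{k}$, whereas here they sit to the \emph{right}. Accordingly, I expect to use \eqref{eqn:rel_xm_2} (which produces $Q$-coefficients on the \emph{left} of an $x$-curve) in place of \eqref{eqn:rel_xm_1}.

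First I would recognize the left-hand side of \eqref{eqn:H3_InvariantOmega5_4}, for a \emph{fixed} $m$, as an instance of $\Theta_{t,x}(k,n)$ from Lemma~\ref{lem:annulus_for_any_kn}. Concretely, the two terms $A\llangle w_{1}P_{n,k}x_{m}w_{2}\rrangle_{c}$ and $-A^{-1}\llangle w_{1}P_{n+1,k}x_{m-1}w_{2}\rrangle_{c}$ are of $\Theta_{t}^{+}$-type (with the right factor absorbed into $w_{2}$, and shifts $v=0$ and $v=1$), while $A^{-1}\llangle w_{1}x_{m-n}\lambda^{k}w_{2}\rrangle_{c}$ and $-A\llangle w_{1}x_{m-n-2}\lambda^{k}w_{2}\rrangle_{c}$ are of $\Theta_{x}^{-}$-type (writing $x_{m-n}=x_{-n+m}$, so $v=m$ and $v=m-2$). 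With $\Theta_{t}^{-}=\Theta_{x}^{+}=0$, Lemma~\ref{lem:annulus_for_any_kn}(1) reduces the whole statement to the single base case $k=0$, for all $n\in\mathbb{Z}$.

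Next I would settle the base case $k=0$ by direct computation. Here $P_{n,0}=P_{n}$ and $\lambda^{0}=1$, so the identity to prove is
\begin{equation*}
A\llangle w_{1}P_{n}x_{m}w_{2}\rrangle_{c}+A^{-1}\llangle w_{1}x_{m-n}w_{2}\rrangle_{c}=A\llangle w_{1}x_{m-n-2}w_{2}\rrangle_{c}+A^{-1}\llangle w_{1}P_{n+1}x_{m-1}w_{2}\rrangle_{c}.
\end{equation*}
I would rewrite the $P$-factors via \eqref{eqn:rel_Pn} and rewrite $x_{m-n}$, $x_{m-n-2}$ via \eqref{eqn:rel_xm_2} with reference index $m-1$, using $Q_{-j}=-Q_{j}$ from Definition~\ref{def:PolynomialsQ_n}. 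A short cancellation should then show that both sides collapse to the common expression $-A^{n+3}\llangle w_{1}Q_{n+1}x_{m}w_{2}\rrangle_{c}+A^{n-2}\llangle w_{1}Q_{n}x_{m-1}w_{2}\rrangle_{c}$, which proves the base case.

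The routine arithmetic of the base case is where I would be most careful rather than where the real difficulty lies: the substantive point is the correct identification of the four terms as $\Theta$-summands, in particular getting the shifts $v$, the signs, and the placement of $Q$-coefficients (left versus right of the $x$-curve) consistent, so that the single $k=0$ check propagates, via Lemma~\ref{lem:annulus_for_any_kn}, to all $k\geq 0$ and $n\in\mathbb{Z}$. Once the framework match is pinned down, no further ingredients beyond \eqref{eqn:rel_Pn} and \eqref{eqn:rel_xm_2} are needed.
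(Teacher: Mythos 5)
Your proposal matches the paper's proof essentially step for step: the paper likewise reduces to the fixed-$m$ identity, invokes Lemma~\ref{lem:annulus_for_any_kn} to reduce to the base case $k=0$ for all $n\in\mathbb{Z}$, and then settles that case using \eqref{eqn:rel_Pn} together with \eqref{eqn:rel_xm_2} (with reference index $m-1$, using $Q_{-j}=-Q_{j}$), with both sides collapsing to exactly the common expression $-A^{n+3}\llangle w_{1}Q_{n+1}x_{m}w_{2}\rrangle_{c}+A^{n-2}\llangle w_{1}Q_{n}x_{m-1}w_{2}\rrangle_{c}$ that you predicted. Your identification of the four terms as $\Theta_{t}^{+}$- and $\Theta_{x}^{-}$-summands (with the stated shifts $v$ and with $\Theta_{t}^{-}=\Theta_{x}^{+}=0$) is correct and makes explicit what the paper leaves implicit; there are no gaps.
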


\begin{proof}
To prove \eqref{eqn:H3_InvariantOmega5_4}, it suffices to show that for any fixed $m$,
\begin{equation}
\label{eqn:case_type_iv}
A\llangle w_{1}P_{n,k}x_{m}w_{2} \rrangle_{c} + A^{-1}\llangle w_{1}x_{m-n}\lambda^{k}w_{2} \rrangle_{c} = A\llangle w_{1}x_{m-n-2}\lambda^{k}w_{2} \rrangle_{c} + A^{-1}\llangle w_{1}P_{n+1,k}x_{m-1}w_{2} \rrangle_{c}.
\end{equation}    
By Lemma~\ref{lem:annulus_for_any_kn}, it is sufficient to check \eqref{eqn:case_type_iv} for $k = 0$. Indeed, by \eqref{eqn:rel_Pn} and \eqref{eqn:rel_xm_2}, the left hand side of \eqref{eqn:case_type_iv} becomes
\begin{eqnarray*}
A\llangle w_{1}P_{n}x_{m}w_{2} \rrangle_{c} + A^{-1}\llangle w_{1}x_{m-n}w_{2} \rrangle_{c}
&=& A\llangle w_{1}(-A^{n+2} Q_{n+1} + A^{n-2} Q_{n-1})x_{m}w_{2} \rrangle_{c} \\
&+& A^{-1}(-A^{n-1}\llangle w_{1}Q_{-n}x_{m-1}w_{2} \rrangle_{c} + A^{n}\llangle w_{1}Q_{1-n}x_{m}w_{2} \rrangle_{c}) \\
&=& -A^{n+3}\llangle w_{1}Q_{n+1}x_{m}w_{2} \rrangle_{c}+ A^{n-2}\llangle w_{1}Q_{n}x_{m-1}w_{2} \rrangle_{c}
\end{eqnarray*}
and the right hand side of \eqref{eqn:case_type_iv} becomes
\begin{eqnarray*}
A\llangle w_{1}x_{m-n-2}w_{2} \rrangle_{c} + A^{-1}\llangle w_{1}P_{n+1}x_{m-1}w_{2} \rrangle_{c}
&=& A(-A^{n+1}\llangle w_{1}Q_{-n-2}x_{m-1}w_{2} \rrangle_{c} + A^{n+2}\llangle w_{1}Q_{-n-1}x_{m}w_{2} \rrangle_{c}) \\
&+& A^{-1}\llangle w_{1}(-A^{n+3} Q_{n+2} + A^{n-1} Q_{n})x_{m-1}w_{2} \rrangle_{c} \\
&=& -A^{n+3}\llangle w_{1}Q_{n+1}x_{m}w_{2} \rrangle_{c} + A^{n-2}\llangle w_{1}Q_{n}x_{m-1}w_{2} \rrangle_{c}.
\end{eqnarray*}
Hence, \eqref{eqn:case_type_iv} holds when $k = 0$, which completes our proof.
\end{proof}

\begin{figure}[H]
\centering
\includegraphics[scale=0.75]{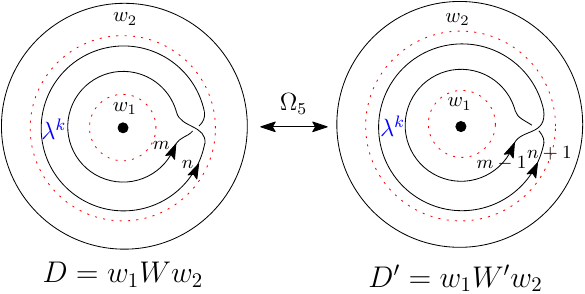}
\caption{Arrow diagrams $D$ and $D'$ in ${\bf A}^{2}$ related by an $\Omega_{5}$-move}
\label{fig:annulus_type5}
\end{figure}

For arrow diagrams $D$ and $D'$ in Figure~\ref{fig:annulus_type5}, we see that $W_{+} = t_{n-m,k}$, $W_{-} = x_{m}\lambda^{k}x_{n}$, $W'_{+} = x_{m-1}\lambda^{k}x_{n+1}$, and $W'_{-} = t_{n-m+2,k}$ are obtained by smoothing the crossing of $W$ and $W'$ according to positive and negative markers. Hence, by \eqref{eqn:hDD'}, $\psi_{c}(D-D') = 0$ is equivalent to \eqref{eqn:H3_InvariantOmega5_5}.

\begin{lemma}
\label{lem:annulus_type5}
Let $w_{1},w_{2}\in \Gamma$, then for all $m,n\in \mathbb{Z}$ and $k\geq 0$,
\begin{equation}
\label{eqn:H3_InvariantOmega5_5}
\llangle w_{1}(AP_{n-m,k} +A^{-1}x_{m}\lambda^{k}x_{n} - A x_{m-1}\lambda^{k}x_{n+1} -A^{-1}P_{n-m+2,k} )w_{2} \rrangle_{c} = 0.
\end{equation}
\end{lemma}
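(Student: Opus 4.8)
The plan is to reuse the two-stage strategy of Lemmas~\ref{lem:annulus_type3} and \ref{lem:annulus_type4}: first eliminate $\lambda^{k}$ by passing to $k=0$ through Lemma~\ref{lem:annulus_for_any_kn}, and then settle the resulting base identity directly. Writing \eqref{eqn:H3_InvariantOmega5_5} as the equality
\[
A\llangle w_{1}P_{n-m,k}w_{2}\rrangle_{c} + A^{-1}\llangle w_{1}x_{m}\lambda^{k}x_{n}w_{2}\rrangle_{c} = A\llangle w_{1}x_{m-1}\lambda^{k}x_{n+1}w_{2}\rrangle_{c} + A^{-1}\llangle w_{1}P_{n-m+2,k}w_{2}\rrangle_{c},
\]
I fix $m$ and treat $n$ as the running variable. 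The genuinely new feature, absent from the earlier two lemmas, is the pair of $x$-curves in $x_{m}\lambda^{k}x_{n}$; but each such term still fits the set-up of Lemma~\ref{lem:annulus_for_any_kn} once the inner curve is absorbed into the left word. Concretely I set $\Theta_{t}^{-}=\Theta_{x}^{-}=0$, build $\Theta_{t}^{+}$ from $(A,w_{1},w_{2},-m)$ and $(-A^{-1},w_{1},w_{2},-m+2)$, and build $\Theta_{x}^{+}$ from $(A^{-1},w_{1}x_{m},w_{2},0)$ and $(-A,w_{1}x_{m-1},w_{2},1)$. Then $\Theta_{t,x}(k,n)$ is exactly the difference of the two sides above, so the recursions demanded by Lemma~\ref{lem:annulus_for_any_kn} hold automatically via \eqref{eqn:Pnk} and \eqref{eqn:xm_rel_1}, and part~(1) of that lemma reduces everything to the single statement that $\Theta_{t,x}(0,n)=0$ for all $n$.

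At $k=0$ the two $x$-curves become adjacent and the identity to prove is
\[
A\llangle w_{1}P_{n-m}w_{2}\rrangle_{c} + A^{-1}\llangle w_{1}x_{m}x_{n}w_{2}\rrangle_{c} - A\llangle w_{1}x_{m-1}x_{n+1}w_{2}\rrangle_{c} - A^{-1}\llangle w_{1}P_{n-m+2}w_{2}\rrangle_{c} = 0.
\]
My key step is the ``two-curve collapse''
\[
\llangle w_{1}x_{p}x_{q}w_{2}\rrangle_{c} = -A^{-1}\llangle w_{1}\lambda P_{q-1-p}w_{2}\rrangle_{c} + 2\llangle w_{1}P_{q-2-p}w_{2}\rrangle_{c} + A^{-2}\llangle w_{1}x_{p+1}x_{q-1}w_{2}\rrangle_{c},
\]
taken inside $\llangle\cdot\rrangle_{c}$. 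Applying it with $(p,q)=(m-1,n+1)$ rewrites $\llangle w_{1}x_{m-1}x_{n+1}w_{2}\rrangle_{c}$ in terms of $P_{n-m+1}$, $P_{n-m}$, and $\llangle w_{1}x_{m}x_{n}w_{2}\rrangle_{c}$, whereupon the two troublesome terms combine into $\llangle w_{1}(\lambda P_{n-m+1}-2AP_{n-m})w_{2}\rrangle_{c}$; substituting this and then removing $P_{n-m+2}$ through the recursion \eqref{eqn:Pn} produces an exact cancellation. To prove the collapse itself I drive the outer curve $x_{q}$ down to the reference curves $x_{c}$ and $x_{c+1}$ by repeated use of \eqref{eqn:rel_xm_1}, invoke part~f) of the definition of $\llangle\cdot\rrangle_{c}$ (which is precisely this collapse when the outer curve is $x_{c+1}$) to turn the $x_{c+1}$-contributions into $P$-polynomials, absorb the remaining $x_{c}$-contributions into the $(x_{c})$-block, and match coefficients using Definition~\ref{def:PolynomialsQ_n}, \eqref{eqn:rel_Pn}, and \eqref{eqn:Pnk_1}.

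I expect this two-curve collapse to be the main obstacle. In Lemmas~\ref{lem:annulus_type3} and \ref{lem:annulus_type4} every term carries a single $x$-curve, so \eqref{eqn:rel_xm_1} together with \eqref{eqn:rel_Pn} reduces both sides at once to a common $Q$-polynomial expression; here a product of two $x$-curves cannot be turned into disk ($P$-)terms by \eqref{eqn:rel_xm_1} or \eqref{eqn:rel_xm_2} alone, since those relations only reindex the pair and, applied at an adjacent reference, return a tautology through $Q_{0}=0$ and $Q_{1}=1$. The topological $\Omega_{2}$--$\Omega_{5}$ identity of Figure~\ref{fig:Omega2and5ForBracketC}, which would give the collapse immediately, is not yet available for $\llangle\cdot\rrangle_{c}$, because establishing invariance of $\llangle\cdot\rrangle_{c}$ under exactly this type of $\Omega_{5}$-move is the purpose of the present lemma. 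The collapse therefore has to be forced through its one built-in instance, part~f) of the definition, and the real work is the careful bookkeeping of the $Q$-polynomial coefficients generated while sliding $x_{q}$ to $x_{c+1}$ for a general tail $w_{2}$, together with checking that the $P$- and $Q$-recursions reproduce the claimed coefficients $-A^{-1}\lambda$, $2$, and $A^{-2}$.
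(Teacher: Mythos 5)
Your first stage is correct and coincides with the paper's: the four tuples you list realize the left-hand side of \eqref{eqn:H3_InvariantOmega5_5} as a sum $\Theta_{t,x}(k,n)$ of the shape required by Lemma~\ref{lem:annulus_for_any_kn}, so part (1) of that lemma legitimately reduces the claim to $k=0$, exactly as the paper does. Your reduction of the $k=0$ case to the ``two-curve collapse'' is also correct algebra: applying the collapse at $(p,q)=(m-1,n+1)$ and then \eqref{eqn:Pn} does produce an exact cancellation, and in fact the collapse is \emph{equivalent} to the $k=0$ case modulo \eqref{eqn:Pn}. But that equivalence is precisely the difficulty: the entire content of the lemma now sits inside the collapse, and the method you propose for proving it fails at identifiable points.

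Concretely: sliding the outer curve with \eqref{eqn:rel_xm_1} places the polynomials $Q_{q-c-1}$, $Q_{q-c}$ \emph{after} the reference curve, producing words $w_{1}x_{p}x_{c+1}\lambda^{j}w_{2}$ with $j\geq 1$ as soon as $|q-c|\geq 2$; part~f) of the definition of $\llangle\cdot\rrangle_{c}$ applies only to words of the exact shape $w'x_{m}\lambda^{n}x_{c+1}(x_{c})^{k}$, with no $\lambda$'s between $x_{c+1}$ and a \emph{terminal} $(x_{c})$-block, so it cannot be invoked on these terms, and eliminating the offending $\lambda$'s is itself an $\Omega_{5}$-type commutation --- the statement under proof. (Switching to \eqref{eqn:rel_xm_2}, which puts the $Q$'s before the reference curve, repairs this, but only when $w_{2}$ is literally a block $(x_{c})^{k'}$, and it still leaves you to verify nontrivial identities expressing sums of the two-index polynomials $P_{r,j}$ weighted by coefficients of $Q_{q-c}$ in terms of the single-index $P_{q-1-p}$, $P_{q-2-p}$.) More seriously, you need the collapse for the lemma's arbitrary $w_{2}\in\Gamma$: after normalizing $w_{2}$ to $\lambda^{n'}(x_{c+1})^{\varepsilon}(x_{c})^{k'}$ --- a reduction you never perform --- the case $\varepsilon=1$ makes the collapse act next to a second $x_{c+1}$ already present in $w_{2}$, an interaction that neither part~f) nor your sketch addresses; this is exactly the case occupying most of the paper's proof (the computations \eqref{eqn:case_type_v_LHS}--\eqref{eqn:case_type_v_RHS_2}) and the reason the paper first proves the auxiliary identity \eqref{eqn:case_type_v_simplify}, for which your proposal has no substitute. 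Finally, even for $\varepsilon=0$, the collapse cannot be propagated from its single built-in instance $q=c+1$, which is part~f), by Lemma~\ref{lem:annulus_for_any_kn}: part (2) of that lemma requires \emph{two} consecutive base values of the running index, each valid for all intermediate $\lambda$-powers, so at least one further independent hand computation (the analogue of the paper's $n=c-1$ cases) is unavoidable. In short, your opening reduction is right, but the crux has been deferred to a lemma whose proposed proof does not go through as described; completing it forces you back to the paper's normalization of $w_{2}$, its four base cases $(\varepsilon,n)\in\{0,1\}\times\{c,c-1\}$, and \eqref{eqn:case_type_v_simplify}.
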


\begin{proof}
We first prove the following identity: For all $w_{1}\in \Gamma$ and all $m, k \in \mathbb{Z}$ with $k\geq 0$,
\begin{equation}
\label{eqn:case_type_v_simplify}
\llangle w_{1}P_{c-m}x_{c+1}(x_{c})^{k} \rrangle_{c} = A^{-2}\llangle w_{1}P_{c-m+1}(x_{c})^{k+1} \rrangle_{c} - A^{-2}\llangle w_{1}x_{m+1}(x_{c})^{k} \rrangle_{c} + \llangle w_{1}x_{m-1}(x_{c})^{k} \rrangle_{c}.
\end{equation}
Indeed, by \eqref{eqn:rel_xm_2} and \eqref{eqn:rel_Pn}, one can see that
\begin{eqnarray*}
& &\llangle w_{1}P_{c-m}x_{c+1}(x_{c})^{k} \rrangle_{c} - A^{-2}\llangle w_{1}P_{c-m+1}(x_{c})^{k+1} \rrangle_{c}\\
&=& \llangle w_{1}(-A^{c-m+2}Q_{c-m+1} + A^{c-m-2}Q_{c-m-1})x_{c+1}(x_{c})^{k} \rrangle_{c}\\
&-& A^{-2}\llangle w_{1}(- A^{c-m+3}Q_{c-m+2} + A^{c-m-1}Q_{c-m})(x_{c})^{k+1} \rrangle_{c} \\
&=& A^{c-m-3}\llangle w_{1}Q_{m-c}(x_{c})^{k+1} \rrangle_{c} - A^{c-m-2}\llangle w_{1}Q_{m-c+1}x_{c+1}(x_{c})^{k} \rrangle_{c} \\ 
&-& A^{c-m+1}\llangle w_{1}Q_{m-c-2}(x_{c})^{k+1} \rrangle_{c} + A^{c-m+2}\llangle w_{1}Q_{m-c-1}x_{c+1}(x_{c})^{k} \rrangle_{c} \\
&=& - A^{-2}\llangle w_{1}x_{m+1}(x_{c})^{k} \rrangle_{c} + \llangle w_{1}x_{m-1}(x_{c})^{k} \rrangle_{c}.
\end{eqnarray*}

To prove \eqref{eqn:H3_InvariantOmega5_5}, it suffices to show that for any fixed $m$,
\begin{equation}
\label{eqn:case_type_v}
A\llangle w_{1}P_{n-m,k}w_{2} \rrangle_{c} + A^{-1}\llangle w_{1}x_{m}\lambda^{k}x_{n}w_{2} \rrangle_{c} = A\llangle w_{1}x_{m-1}\lambda^{k}x_{n+1}w_{2} \rrangle_{c} + A^{-1}\llangle w_{1}P_{n-m+2,k}w_{2} \rrangle_{c}.
\end{equation}   
By Lemma~\ref{lem:annulus_for_any_kn}, it is sufficient to check \eqref{eqn:case_type_v} for $k = 0$. Indeed, we may assume that $w_{2} = \lambda^{n'}(x_{c+1})^{\varepsilon}(x_{c})^{k'} \in \Gamma$ with $\varepsilon \in \{0,1\}$ by using similar arguments as at the beginning of our proof of Lemma~\ref{lem:loc_prop_bracket_c}. We prove that \eqref{eqn:case_type_v} holds for $n' = 0$ and $n = c,c-1$. 

If $\varepsilon = 0$ and $n = c$, then after applying part f) in the definition of $\llangle\cdot\rrangle_{c}$ and \eqref{eqn:Pn}, the right hand side of \eqref{eqn:case_type_v} becomes
\begin{eqnarray*}
A\llangle w_{1}x_{m-1}x_{c+1}(x_{c})^{k'} \rrangle_{c} + A^{-1}\llangle w_{1}P_{c-m+2}(x_{c})^{k'} \rrangle_{c}
&=& A(-A^{-1}\llangle w_{1}\lambda P_{c-m+1}(x_{c})^{k'} \rrangle_{c} + 2\llangle w_{1}P_{c-m}(x_{c})^{k'} \rrangle_{c} \\
&+& A^{-2}\llangle w_{1}x_{m}(x_{c})^{k'+1} \rrangle_{c}) + A^{-1}\llangle w_{1}P_{c-m+2}(x_{c})^{k'} \rrangle_{c} \\
&=& A\llangle w_{1}P_{c-m}(x_{c})^{k'} \rrangle_{c} + A^{-1}\llangle w_{1}x_{m}(x_{c})^{k'+1} \rrangle_{c},
\end{eqnarray*}
which is precisely the left hand side of \eqref{eqn:case_type_v}.

If $\varepsilon = 0$ and $n = c-1$, then after applying \eqref{eqn:xm_rel_1}, \eqref{eqn:xm_rel_2}, part f) in the definition of $\llangle\cdot\rrangle_{c}$, and \eqref{eqn:Pn}, the left hand side of \eqref{eqn:case_type_v} becomes
\begin{eqnarray*}
& & A\llangle w_{1}P_{c-1-m}(x_{c})^{k'} \rrangle_{c} + A^{-1}\llangle w_{1}x_{m}x_{c-1}(x_{c})^{k'} \rrangle_{c} \\
&=& A\llangle w_{1}P_{c-1-m}(x_{c})^{k'} \rrangle_{c} + \llangle w_{1}x_{m}\lambda(x_{c})^{k'+1} \rrangle_{c} - A\llangle w_{1}x_{m}x_{c+1}(x_{c})^{k'} \rrangle_{c} \\
&=& A\llangle w_{1}P_{c-1-m}(x_{c})^{k'} \rrangle_{c} + A\llangle w_{1}x_{m-1}(x_{c})^{k'+1} \rrangle_{c} + A^{-1}\llangle w_{1}x_{m+1}(x_{c})^{k'+1} \rrangle_{c} \\
&+& \llangle w_{1}\lambda P_{c-m}(x_{c})^{k'} \rrangle_{c} - 2A\llangle w_{1}P_{c-1-m}(x_{c})^{k'} \rrangle_{c} - A^{-1}\llangle w_{1}x_{m+1}(x_{c})^{k'+1} \rrangle_{c} \\
&=& A\llangle w_{1}x_{m-1}(x_{c})^{k'+1} \rrangle_{c} + A^{-1}\llangle w_{1}P_{c-m+1}(x_{c})^{k'} \rrangle_{c},
\end{eqnarray*}  
which is precisely the right hand side of \eqref{eqn:case_type_v}.

If $\varepsilon = 1$ and $n = c$, then the left hand side of \eqref{eqn:case_type_v} becomes
\begin{equation}
\label{eqn:case_type_v_LHS}
A\llangle w_{1}P_{c-m}x_{c+1}(x_{c})^{k'} \rrangle_{c} + A^{-1}\llangle w_{1}x_{m}x_{c}x_{c+1}(x_{c})^{k'} \rrangle_{c}   
\end{equation}
and the right hand side of \eqref{eqn:case_type_v} becomes
\begin{equation}
\label{eqn:case_type_v_RHS}
A\llangle w_{1}x_{m-1}x_{c+1}x_{c+1}(x_{c})^{k'} \rrangle_{c} + A^{-1}\llangle w_{1}P_{c-m+2}x_{c+1}(x_{c})^{k'} \rrangle_{c}.
\end{equation}
The second term of \eqref{eqn:case_type_v_LHS} after applying part f) in the definition of $\llangle\cdot\rrangle_{c}$ turns into
\begin{eqnarray*}
& & A^{-1}\llangle w_{1}x_{m}x_{c}x_{c+1}(x_{c})^{k'} \rrangle_{c} \\
&=& -A^{-2}\llangle w_{1}x_{m}\lambda P_{0}(x_{c})^{k'} \rrangle_{c} + 2A^{-1}\llangle w_{1}x_{m}P_{-1}(x_{c})^{k'} \rrangle_{c} 
+ A^{-3}\llangle w_{1}x_{m}x_{c+1}(x_{c})^{k'+1} \rrangle_{c} \\
&=& \llangle w_{1}x_{m}\lambda (x_{c})^{k'} \rrangle_{c} - A^{-4}\llangle w_{1}x_{m}\lambda(x_{c})^{k'} \rrangle_{c} + A^{-3}\llangle w_{1}x_{m}x_{c+1}(x_{c})^{k'+1} \rrangle_{c}. 
\end{eqnarray*} 
Using \eqref{eqn:xm_rel_2}, the first term in the above sum becomes 
\begin{equation*}
\llangle w_{1}x_{m}\lambda (x_{c})^{k'} \rrangle_{c} = A\llangle w_{1}x_{m-1}(x_{c})^{k'} \rrangle_{c} + A^{-1}\llangle w_{1}x_{m+1}(x_{c})^{k'} \rrangle_{c} 
\end{equation*}
and, by part f) in the definition of $\llangle\cdot\rrangle_{c}$ and \eqref{eqn:Pn}, the third term becomes
\begin{eqnarray*}
& & A^{-3}\llangle w_{1}x_{m}x_{c+1}(x_{c})^{k'+1} \rrangle_{c}\\
&=& - A^{-4}\llangle w_{1}\lambda P_{c-m}(x_{c})^{k'+1} \rrangle_{c} + 2A^{-3}\llangle w_{1}P_{c-m-1}(x_{c})^{k'+1} \rrangle_{c} + A^{-5}\llangle w_{1}x_{m+1}(x_{c})^{k'+2} \rrangle_{c} \\
&=& - A^{-5}\llangle w_{1}P_{c-m+1}(x_{c})^{k'+1} \rrangle_{c} + A^{-3}\llangle w_{1}P_{c-m-1}(x_{c})^{k'+1} \rrangle_{c} + A^{-5}\llangle w_{1}x_{m+1}(x_{c})^{k'+2} \rrangle_{c}.
\end{eqnarray*} 
Moreover, by \eqref{eqn:case_type_v_simplify}, the first term of \eqref{eqn:case_type_v_LHS} is
\begin{equation*}
A\llangle w_{1}P_{c-m}x_{c+1}(x_{c})^{k'} \rrangle_{c} = A^{-1}\llangle w_{1}P_{c-m+1}(x_{c})^{k'+1} \rrangle_{c} - A^{-1}\llangle w_{1}x_{m+1}(x_{c})^{k'} \rrangle_{c} + A\llangle w_{1}x_{m-1}(x_{c})^{k'} \rrangle_{c}.
\end{equation*}
Therefore, after combining all results above, we see that  \eqref{eqn:case_type_v_LHS} becomes
\begin{eqnarray*}
& & A\llangle w_{1}P_{c-m}x_{c+1}(x_{c})^{k'} \rrangle_{c} + A^{-1}\llangle w_{1}x_{m}x_{c}x_{c+1}(x_{c})^{k'} \rrangle_{c} \\
&=& A^{-1}\llangle w_{1}P_{c-m+1}(x_{c})^{k'+1} \rrangle_{c} + 2A\llangle w_{1}x_{m-1}(x_{c})^{k'} \rrangle_{c}
- A^{-4}\llangle w_{1}x_{m}\lambda(x_{c})^{k'} \rrangle_{c} \\  
&-& A^{-5}\llangle w_{1}P_{c-m+1}(x_{c})^{k'+1} \rrangle_{c}
+ A^{-3}\llangle w_{1}P_{c-m-1}(x_{c})^{k'+1} \rrangle_{c} + A^{-5}\llangle w_{1}x_{m+1}(x_{c})^{k'+2} \rrangle_{c}.
\end{eqnarray*} 
Similarly, in \eqref{eqn:case_type_v_RHS} using part f) in the definition of $\llangle\cdot\rrangle_{c}$, the first term becomes
\begin{eqnarray*}
& & A\llangle w_{1}x_{m-1}x_{c+1}x_{c+1}(x_{c})^{k'} \rrangle_{c}  \\
&=& -\llangle w_{1}x_{m-1}\lambda P_{-1}(x_{c})^{k'} \rrangle_{c} + 2A\llangle w_{1}x_{m-1}P_{-2}(x_{c})^{k'} \rrangle_{c} 
+ A^{-1}\llangle w_{1}x_{m-1}x_{c+2}(x_{c})^{k'+1} \rrangle_{c} \\
&=& -A^{-3}\llangle w_{1}x_{m-1}\lambda^{2}(x_{c})^{k'} \rrangle_{c} + 2A\llangle w_{1}x_{m-1}(1 + A^{-4})(x_{c})^{k'} \rrangle_{c} + A^{-1}\llangle w_{1}x_{m-1}x_{c+2}(x_{c})^{k'+1} \rrangle_{c}.
\end{eqnarray*}  
Moreover, in the above sum, by \eqref{eqn:xm_rel_2}, the first term becomes
\begin{eqnarray*}
-A^{-3}\llangle w_{1}x_{m-1}\lambda^{2}(x_{c})^{k'} \rrangle_{c}
&=& -A^{-2}\llangle w_{1}x_{m-2}\lambda(x_{c})^{k'} \rrangle_{c} - A^{-4}\llangle w_{1}x_{m}\lambda(x_{c})^{k'} \rrangle_{c} \\
&=& -A^{-1}\llangle w_{1}x_{m-3}(x_{c})^{k'} \rrangle_{c} - A^{-3}\llangle w_{1}x_{m-1}(x_{c})^{k'} \rrangle_{c} -A^{-4}\llangle w_{1}x_{m}\lambda(x_{c})^{k'} \rrangle_{c},
\end{eqnarray*}
and by \eqref{eqn:xm_rel_1} its last term is
\begin{equation*}
A^{-1}\llangle w_{1}x_{m-1}x_{c+2}(x_{c})^{k'+1} \rrangle_{c}
= A^{-2}\llangle w_{1}x_{m-1}\lambda x_{c+1}(x_{c})^{k'+1} \rrangle_{c} - A^{-3}\llangle w_{1}x_{m-1}(x_{c})^{k'+2} \rrangle_{c}. 
\end{equation*}
For the first term in the above equation, using part f) in the definition of $\llangle\cdot\rrangle_{c}$,
\begin{eqnarray*}
A^{-2}\llangle w_{1}x_{m-1}\lambda x_{c+1}(x_{c})^{k'+1} \rrangle_{c}
&=& - A^{-3}\llangle w_{1}\lambda P_{c-m+1,1}(x_{c})^{k'+1} \rrangle_{c} + 2A^{-2} \llangle w_{1}P_{c-m,1}(x_{c})^{k'+1} \rrangle_{c} \\
&+& A^{-4}\llangle w_{1}x_{m}\lambda (x_{c})^{k'+2} \rrangle_{c},
\end{eqnarray*}
and we rewrite each term on the right hand side of the equation just above as follows:
\begin{eqnarray*}
- A^{-3}\llangle w_{1}\lambda P_{c-m+1,1}(x_{c})^{k'+1} \rrangle_{c} 
&=& - A^{-3}\llangle w_{1}\lambda (AP_{c-m+2} + A^{-1}P_{c-m})(x_{c})^{k'+1} \rrangle_{c} \\
&=& - A^{-3}\llangle w_{1}(A^{2}P_{c-m+1} + P_{c-m+3} + P_{c-m-1} + A^{-2}P_{c-m+1})(x_{c})^{k'+1} \rrangle_{c}
\end{eqnarray*}
by \eqref{eqn:Pnk} and \eqref{eqn:Pn},
\begin{equation*}
2A^{-2} \llangle w_{1}P_{c-m,1}(x_{c})^{k'+1} \rrangle_{c} = 2A^{-2} \llangle w_{1}(AP_{c-m+1} + A^{-1}P_{c-m-1})(x_{c})^{k'+1} \rrangle_{c}
\end{equation*}
by \eqref{eqn:Pnk}, and 
\begin{equation*}
A^{-4}\llangle w_{1}x_{m}\lambda (x_{c})^{k'+2} \rrangle_{c} = A^{-3}\llangle w_{1}x_{m-1}(x_{c})^{k'+2} \rrangle_{c} + A^{-5}\llangle w_{1}x_{m+1}(x_{c})^{k'+2} \rrangle_{c}
\end{equation*}
by \eqref{eqn:xm_rel_2}. Moreover, for the second term of \eqref{eqn:case_type_v_RHS}, it follows by \eqref{eqn:case_type_v_simplify} that
\begin{equation*}
A^{-1}\llangle w_{1}P_{c-m+2}x_{c+1}(x_{c})^{k'} \rrangle_{c} = A^{-3}\llangle w_{1}P_{c-m+3}(x_{c})^{k'+1} \rrangle_{c} - A^{-3}\llangle w_{1}x_{m-1} (x_{c})^{k'} \rrangle_{c} + A^{-1}\llangle w_{1}x_{m-3}(x_{c})^{k'} \rrangle_{c}.
\end{equation*}
Therefore, after combining all results above, we see that  \eqref{eqn:case_type_v_RHS} becomes
\begin{eqnarray*}
& & A\llangle w_{1}x_{m-1}x_{c+1}x_{c+1}(x_{c})^{k'} \rrangle_{c} + A^{-1}\llangle w_{1}P_{c-m+2}x_{c+1}(x_{c})^{k'} \rrangle_{c} \\
&=& A^{-1}\llangle w_{1}P_{c-m+1}(x_{c})^{k'+1} \rrangle_{c} + 2A\llangle w_{1}x_{m-1}(x_{c})^{k'} \rrangle_{c}
- A^{-4}\llangle w_{1}x_{m}\lambda(x_{c})^{k'} \rrangle_{c} \\  
&-& A^{-5}\llangle w_{1}P_{c-m+1}(x_{c})^{k'+1} \rrangle_{c}
+ A^{-3}\llangle w_{1}P_{c-m-1}(x_{c})^{k'+1} \rrangle_{c} + A^{-5}\llangle w_{1}x_{m+1}(x_{c})^{k'+2} \rrangle_{c}.
\end{eqnarray*} 
It follows that \eqref{eqn:case_type_v_LHS} and \eqref{eqn:case_type_v_RHS} are same, so \eqref{eqn:case_type_v} holds.

If $\varepsilon = 1$ and $n = c-1$, then the left hand side of \eqref{eqn:case_type_v} becomes
\begin{equation}
\label{eqn:case_type_v_LHS_2}
A\llangle w_{1}P_{c-1-m}x_{c+1}(x_{c})^{k'} \rrangle_{c} + A^{-1}\llangle w_{1}x_{m}x_{c-1}x_{c+1}(x_{c})^{k'} \rrangle_{c}   
\end{equation}
and the right hand side of \eqref{eqn:case_type_v} becomes
\begin{equation}
\label{eqn:case_type_v_RHS_2}
A\llangle w_{1}x_{m-1}x_{c}x_{c+1}(x_{c})^{k'} \rrangle_{c} + A^{-1}\llangle w_{1}P_{c-m+1}x_{c+1}(x_{c})^{k'} \rrangle_{c}.
\end{equation}
After applying \eqref{eqn:case_type_v_simplify}, the first term of \eqref{eqn:case_type_v_LHS_2} becomes
\begin{equation*}
A\llangle w_{1}P_{c-1-m}x_{c+1}(x_{c})^{k'} \rrangle_{c} = A^{-1}\llangle w_{1}P_{c-m}(x_{c})^{k'+1} \rrangle_{c} - A^{-1}\llangle w_{1}x_{m+2}(x_{c})^{k'} \rrangle_{c} + A\llangle w_{1}x_{m}(x_{c})^{k'} \rrangle_{c}.
\end{equation*}
Using part f) in the definition of $\llangle\cdot\rrangle_{c}$, the second term of \eqref{eqn:case_type_v_LHS_2} becomes
\begin{eqnarray*}
& &A^{-1}\llangle w_{1}x_{m}x_{c-1}x_{c+1}(x_{c})^{k'} \rrangle_{c} \\
&=& -A^{-2}\llangle w_{1}x_{m}\lambda P_{1}(x_{c})^{k'} \rrangle_{c} + 2A^{-1}\llangle w_{1}x_{m}P_{0}(x_{c})^{k'} \rrangle_{c} + A^{-3}\llangle w_{1}x_{m}(x_{c})^{k'+2} \rrangle_{c} \\
&=& A\llangle w_{1}x_{m}\lambda^{2} (x_{c})^{k'} \rrangle_{c} - 2A\llangle w_{1}x_{m}(x_{c})^{k'} \rrangle_{c} - 2A^{-3}\llangle w_{1}x_{m}(x_{c})^{k'} \rrangle_{c} + A^{-3}\llangle w_{1}x_{m}(x_{c})^{k'+2} \rrangle_{c}.
\end{eqnarray*}
Moreover, we use \eqref{eqn:xm_rel_2} to rewrite the first term of the above equation as
\begin{eqnarray*}
A\llangle w_{1}x_{m}\lambda^{2} (x_{c})^{k'} \rrangle_{c} 
&=& A^{2}\llangle w_{1}x_{m-1}\lambda (x_{c})^{k'} \rrangle_{c} + \llangle w_{1}x_{m+1}\lambda (x_{c})^{k'} \rrangle_{c} \\
&=& A^{2}\llangle w_{1}x_{m-1}\lambda (x_{c})^{k'} \rrangle_{c} + A\llangle w_{1}x_{m}(x_{c})^{k'} \rrangle_{c} + A^{-1}\llangle w_{1}x_{m+2}(x_{c})^{k'} \rrangle_{c}.
\end{eqnarray*}
Therefore, combining all results above, \eqref{eqn:case_type_v_LHS_2} becomes
\begin{eqnarray*}
& & A\llangle w_{1}P_{c-1-m}x_{c+1}(x_{c})^{k'} \rrangle_{c} + A^{-1}\llangle w_{1}x_{m}x_{c-1}x_{c+1}(x_{c})^{k'} \rrangle_{c} \\
&=& A^{-1}\llangle w_{1}P_{c-m}(x_{c})^{k'+1} \rrangle_{c} + A^{2}\llangle w_{1}x_{m-1}\lambda (x_{c})^{k'} \rrangle_{c} - 2A^{-3}\llangle w_{1}x_{m}(x_{c})^{k'} \rrangle_{c} 
+ A^{-3}\llangle w_{1}x_{m}(x_{c})^{k'+2} \rrangle_{c}.
\end{eqnarray*}
Analogously, after using \eqref{eqn:case_type_v_simplify}, the second term of \eqref{eqn:case_type_v_RHS_2} becomes
\begin{equation*}
A^{-1}\llangle w_{1}P_{c-m+1}x_{c+1}(x_{c})^{k'} \rrangle_{c} = A^{-3}\llangle w_{1}P_{c-m+2}(x_{c})^{k'+1} \rrangle_{c} - A^{-3}\llangle w_{1}x_{m}(x_{c})^{k'} \rrangle_{c} + A^{-1}\llangle w_{1}x_{m-2}(x_{c})^{k'} \rrangle_{c}.
\end{equation*}
For the first term of \eqref{eqn:case_type_v_simplify}, by part f) in the definition of $\llangle\cdot\rrangle_{c}$, we see that
\begin{eqnarray*}
& & A\llangle w_{1}x_{m-1}x_{c}x_{c+1}(x_{c})^{k'} \rrangle_{c} \\
&=& -\llangle w_{1}x_{m-1}\lambda P_{0}(x_{c})^{k'} \rrangle_{c} + 2A\llangle w_{1}x_{m-1}P_{-1}(x_{c})^{k'} \rrangle_{c} + A^{-1}\llangle w_{1}x_{m-1}x_{c+1}(x_{c})^{k'+1} \rrangle_{c} \\
&=& A^{2}\llangle w_{1}x_{m-1}\lambda (x_{c})^{k'} \rrangle_{c} - A^{-2}\llangle w_{1}x_{m-1} \lambda (x_{c})^{k'} \rrangle_{c} + A^{-1}\llangle w_{1}x_{m-1}x_{c+1}(x_{c})^{k'+1} \rrangle_{c}.
\end{eqnarray*}
Moreover, we rewrite the second and last terms of the above equation as
\begin{equation*}
- A^{-2}\llangle w_{1}x_{m-1} \lambda (x_{c})^{k'} \rrangle_{c} = - A^{-1}\llangle w_{1}x_{m-2}(x_{c})^{k'} \rrangle_{c} - A^{-3}\llangle w_{1}x_{m}(x_{c})^{k'} \rrangle_{c}
\end{equation*}
by \eqref{eqn:xm_rel_2} and
\begin{eqnarray*}
& &A^{-1}\llangle w_{1}x_{m-1}x_{c+1}(x_{c})^{k'+1} \rrangle_{c} \\
&=& - A^{-2}\llangle w_{1}\lambda P_{c-m+1}(x_{c})^{k'+1} \rrangle_{c} + 2A^{-1}\llangle w_{1}P_{c-m}(x_{c})^{k'+1} \rrangle_{c} + A^{-3}\llangle w_{1}x_{m}(x_{c})^{k'+2} \rrangle_{c} \\
&=& - A^{-3}\llangle w_{1}P_{c-m+2}(x_{c})^{k'+1} \rrangle_{c} + A^{-1}\llangle w_{1}P_{c-m}(x_{c})^{k'+1} \rrangle_{c} + A^{-3}\llangle w_{1}x_{m}(x_{c})^{k'+2} \rrangle_{c}
\end{eqnarray*}
by part f) in the definition of $\llangle\cdot\rrangle_{c}$ and \eqref{eqn:Pn}.
Therefore, after combining all the results above, we see that \eqref{eqn:case_type_v_RHS_2} becomes
\begin{eqnarray*}
& & A\llangle w_{1}x_{m-1}x_{c}x_{c+1}(x_{c})^{k'} \rrangle_{c} + A^{-1}\llangle w_{1}P_{c-m+1}x_{c+1}(x_{c})^{k'} \rrangle_{c} \\
&=& - 2A^{-3}\llangle w_{1}x_{m}(x_{c})^{k'} \rrangle_{c} + A^{2}\llangle w_{1}x_{m-1}\lambda (x_{c})^{k'} \rrangle_{c} + A^{-1}\llangle w_{1}P_{c-m}(x_{c})^{k'+1} \rrangle_{c} + A^{-3}\llangle w_{1}x_{m}(x_{c})^{k'+2} \rrangle_{c}.
\end{eqnarray*}
It follows that \eqref{eqn:case_type_v_LHS_2} and \eqref{eqn:case_type_v_RHS_2} are same, so \eqref{eqn:case_type_v} holds.

Since \eqref{eqn:case_type_v} holds for $n' = 0$ and $n = c,c-1$, using Lemma~\ref{lem:annulus_for_any_kn}, we conclude that it also holds for any $n' > 0$ and $n \in \mathbb{Z}$.
\end{proof}

\begin{figure}[ht]
\centering
\includegraphics[scale=0.6]{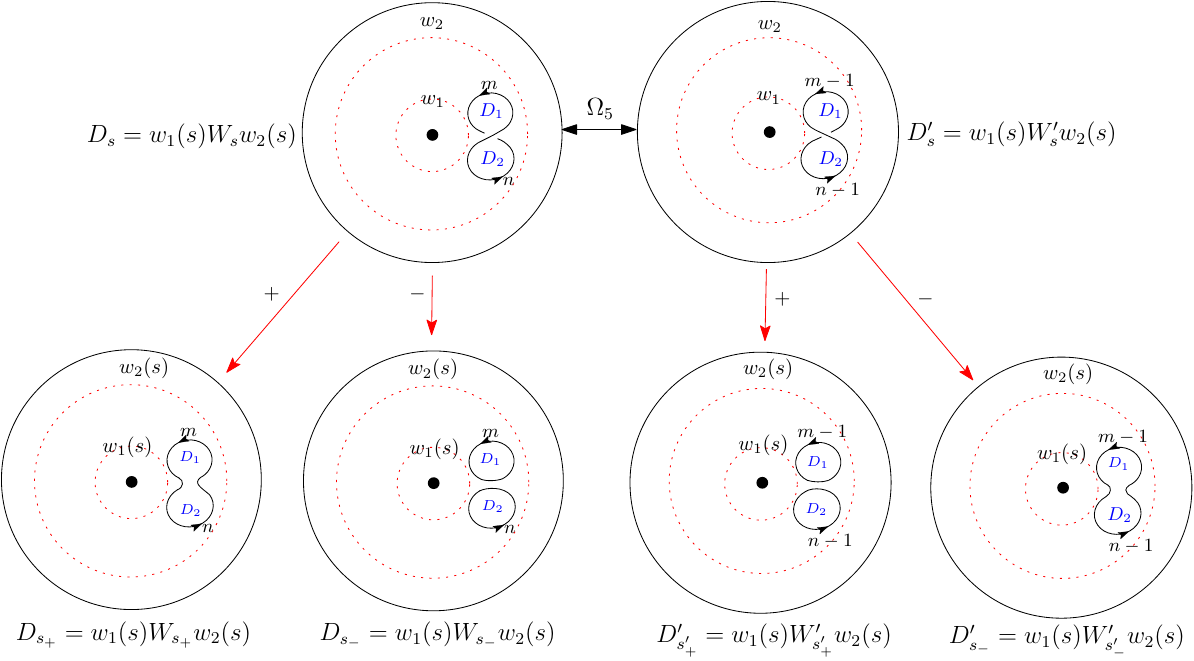}
\caption{$D_{s} = w_{1}(s)W_{s}w_{2}(s)$ and $D'_{s}= w_{1}(s)W'_{s}w_{2}(s)$ in ${\bf A}^{2}$ related by $\Omega_{5}$-move}
\label{fig:Omega5Kauffman_1}
\end{figure}

\begin{figure}[ht]
\centering
\includegraphics[scale=0.6]{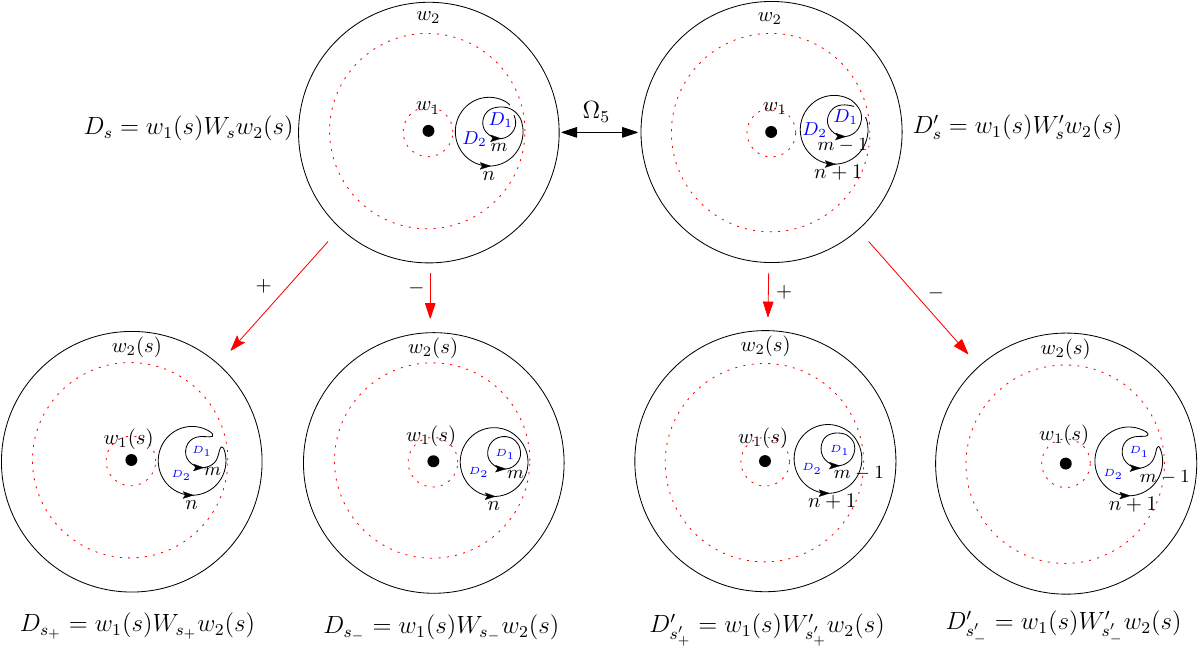}
\caption{$D_{s} = w_{1}(s)W_{s}w_{2}(s)$ and $D'_{s}= w_{1}(s)W'_{s}w_{2}(s)$ in ${\bf A}^{2}$ related by $\Omega_{5}$-move}
\label{fig:Omega5Kauffman_2}
\end{figure}

\begin{lemma}
\label{lem:main_lemma_for_h}
For all arrow diagrams $D,D'$ in ${\bf A}^{2}$ related by $\Omega_{1} - \Omega_{5}$-moves, $\psi_{c}(D-D') = 0$. Therefore, the map $\psi_{c}:R\mathcal{D}({\bf A}^{2}) \to R\Sigma_{c}$ is a well-defined homomorphism of free $R$-modules. 
\end{lemma}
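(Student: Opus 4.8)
The plan is to reduce the general case to the single-crossing configurations already settled in \eqref{eqn:InvarianceOnOmega5_1_2} and in Lemmas~\ref{lem:annulus_type3}, \ref{lem:annulus_type4}, and \ref{lem:annulus_type5}. Since $\psi_c$ has already been shown to be invariant under $\Omega_1-\Omega_4$-moves and to respect both skein relations, and since any relation between two arrow diagrams is a finite composition of elementary moves, it suffices to treat a single $\Omega_5$-move. So I assume $D$ and $D'$ differ by one $\Omega_5$-move supported in a small ball $B\subset {\bf A}^2$, inside which sits the single crossing altered by the move, while $D$ and $D'$ coincide outside $B$ and share all remaining crossings.

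First I would resolve every crossing of $D$ lying outside $B$ using the two skein relations, which $\psi_c$ respects: $\psi_c(D_+) = A\,\psi_c(D_0) + A^{-1}\psi_c(D_\infty)$ and $\psi_c(E\sqcup T_1) = -(A^2+A^{-2})\psi_c(E)$. Resolving these crossings one at a time expresses $\psi_c(D)$ as an $R$-linear combination $\sum_s \alpha_s\,\psi_c(D_s)$, where $s$ ranges over the Kauffman states of the crossings outside $B$, each $D_s$ is crossingless away from $B$ but still carries the single crossing inside $B$, and $\alpha_s$ records the markers of $s$ together with the trivial circles removed outside $B$. Because $D$ and $D'$ agree outside $B$ and the move does not alter the arcs meeting $\partial B$, the same states $s$, coefficients $\alpha_s$, and external trivial circles occur for $D'$; hence $\psi_c(D)-\psi_c(D') = \sum_s \alpha_s\bigl(\psi_c(D_s)-\psi_c(D'_s)\bigr)$. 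Each pair $D_s = w_1(s)W_sw_2(s)$, $D'_s = w_1(s)W'_sw_2(s)$ carries only the crossing inside $B$, differing there by the local $\Omega_5$-move, exactly as depicted in Figures~\ref{fig:Omega5Kauffman_1} and \ref{fig:Omega5Kauffman_2}.

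Next I would put the surrounding words into $\Gamma$. Applying $\langle\cdot\rangle_r$ to each crossingless ${\bf D}^2$-component of $w_1(s)$ and $w_2(s)$ (this is exactly $\llangle\cdot\rrangle_\Gamma$, which does not touch the crossing inside $B$) rewrites each $\psi_c(D_s)-\psi_c(D'_s)$ as an $R$-linear combination of differences $\psi_c(w_1 W_s w_2 - w_1 W'_s w_2)$ with $w_1,w_2\in\Gamma$, to which \eqref{eqn:hDD'} applies. The local configuration $(W_s,W'_s)$ is one of the five standard types: if the move happens inside a single ${\bf D}^2$, with no $x$-curve met, then \eqref{eqn:InvarianceOnOmega5_1_2} already forces the relevant combination to vanish under $\llangle\cdot\rrangle_\Gamma$, hence under the subsequently applied $\llangle\cdot\rrangle_c$; if instead the move meets the core so that an $x$-curve is involved, then the configuration is precisely one of those in Figures~\ref{fig:annulus_type3}, \ref{fig:annulus_type4}, and \ref{fig:annulus_type5}, and Lemmas~\ref{lem:annulus_type3}, \ref{lem:annulus_type4}, and \ref{lem:annulus_type5} give $\psi_c(w_1 W_s w_2 - w_1 W'_s w_2) = 0$. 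In every case each summand vanishes, whence $\psi_c(D-D') = 0$.

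I expect the main obstacle to be the bookkeeping of this localization rather than any new computation: one must check that resolving the external crossings genuinely leaves a single crossing whose neighborhood matches one of the five standard pictures, and that the flanking crossingless pieces reduce to honest elements of $\Gamma$ so that \eqref{eqn:hDD'} and the special-case lemmas apply verbatim. Granting this, $\psi_c$ is invariant under all of $\Omega_1-\Omega_5$ and compatible with both skein relations, so it descends to a well-defined $R$-module homomorphism $R\mathcal{D}({\bf A}^2)\to R\Sigma_c$; it is a map of \emph{free} $R$-modules because $\mathcal{D}({\bf A}^2)$ and $\Sigma_c$ are, by construction, bases of the source and target.
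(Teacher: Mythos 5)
Your proposal is correct and follows essentially the same route as the paper's proof: reduce to a single $\Omega_5$-move, expand over the Kauffman states of the crossings away from the move so that each resulting pair of diagrams carries only the one crossing inside the ball flanked by crossingless pieces, rewrite those pieces as $R$-linear combinations of elements of $\Gamma$ via $\llangle\cdot\rrangle_{\Gamma}$, and then invoke \eqref{eqn:InvarianceOnOmega5_1_2} together with Lemmas~\ref{lem:annulus_type3}, \ref{lem:annulus_type4}, and \ref{lem:annulus_type5} for the five local configurations. The bookkeeping you flag as the main obstacle is exactly what the paper carries out with Figures~\ref{fig:Omega5Kauffman_1}--\ref{fig:Omega5Kauffman_5}.
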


\begin{proof}
If $D$ and $D'$ are related by $\Omega_{i}$-move with $i=1,2,3,4$, then $\psi_{c}(D-D') = 0$. Assume that $D$ and $D'$ are related by an $\Omega_{5}$-move in a $2$-disk ${\bf D}^{2}\subset {\bf A}^{2}$. Since both diagrams have the same number of crossings, we label by $c_{1},c_{2},\ldots,c_{k-1},c_{k}$ and $c_{1},c_{2},\ldots,c_{k-1},c'_{k}$ with $c_{k}$ and $c'_{k}$ assigned to the crossing inside ${\bf D}^{2}$, respectively. Furthermore,
\begin{equation*}
\mathcal{K}(D) = \mathcal{K^{+}}(D)\cup \mathcal{K^{-}}(D) \quad \text{and} \quad \mathcal{K}(D') = \mathcal{K^{+}}(D')\cup \mathcal{K^{-}}(D'),   
\end{equation*}
where $\mathcal{K^{+}}(D),\mathcal{K^{-}}(D)$ and $\mathcal{K^{+}}(D'),\mathcal{K^{-}}(D')$ are all Kauffman states of $D$ and $D'$ with markers $\pm 1$ at $c_{k}$ and $c'_{k}$, respectively. For $s_{+}\in \mathcal{K^{+}}(D)$ and $s'_{+}\in \mathcal{K^{+}}(D')$, $s_{+} = s'_{+} = (s,+1)$ and, for $s_{-}\in \mathcal{K^{-}}(D)$ and $s'_{-}\in \mathcal{K^{-}}(D')$, $s_{-} = s'_{-} = (s,-1)$, where $s\in S_{k-1} = \{-1,1\}^{k-1}$. Let $s\in S_{k-1}$, then smoothing crossings $c_{1},c_{2},\ldots,c_{k-1}$ of $D$ and $D'$ results in arrow diagrams $D_{s}$ and $D'_{s}$ in the top of Figure~\ref{fig:Omega5Kauffman_1}, Figure~\ref{fig:Omega5Kauffman_2}, Figure~\ref{fig:Omega5Kauffman_3}, Figure~\ref{fig:Omega5Kauffman_4}, and Figure~\ref{fig:Omega5Kauffman_5}. Thus, smoothing crossings $c_{k}$ and $c'_{k}$ of $D_{s}$ and $D'_{s}$ results in diagrams $D_{s_{+}}$, $D'_{s_{+}}$, $D_{s_{-}}$ and $D'_{s_{-}}$ on the bottom of Figure~\ref{fig:Omega5Kauffman_1}, Figure~\ref{fig:Omega5Kauffman_2}, Figure~\ref{fig:Omega5Kauffman_3}, Figure~\ref{fig:Omega5Kauffman_4}, and Figure~\ref{fig:Omega5Kauffman_5}. Therefore,
\begin{eqnarray*}
\llangle D - D' \rrangle &=& \sum_{s\in S_{k-1}} A^{p(s)-n(s)} \langle w_{1}(s)(A W_{s_{+}} + A^{-1} W_{s_{-}} - A W'_{s'_{+}} - A^{-1} W'_{s'_{-}})w_{2}(s) \rangle
\end{eqnarray*}
and consequently, it suffices to show that
\begin{equation*}
\llangle \llangle w_{1}(s)(A W_{s_{+}} + A^{-1} W_{s_{-}} - A W'_{s'_{+}} - A^{-1} W'_{s'_{-}})w_{2}(s)\rrangle_{\Gamma} \rrangle_{c} = 0.  
\end{equation*}
Clearly, for $D_{s_{+}}$, $D_{s_{-}}$, $D'_{s'_{+}}$, and $D'_{s'_{-}}$ in Figure~\ref{fig:Omega5Kauffman_1} or Figure~\ref{fig:Omega5Kauffman_2}, the above equation follows from \eqref{eqn:InvarianceOnOmega5_1_2}.

\begin{figure}[ht]
\centering
\includegraphics[scale=0.6]{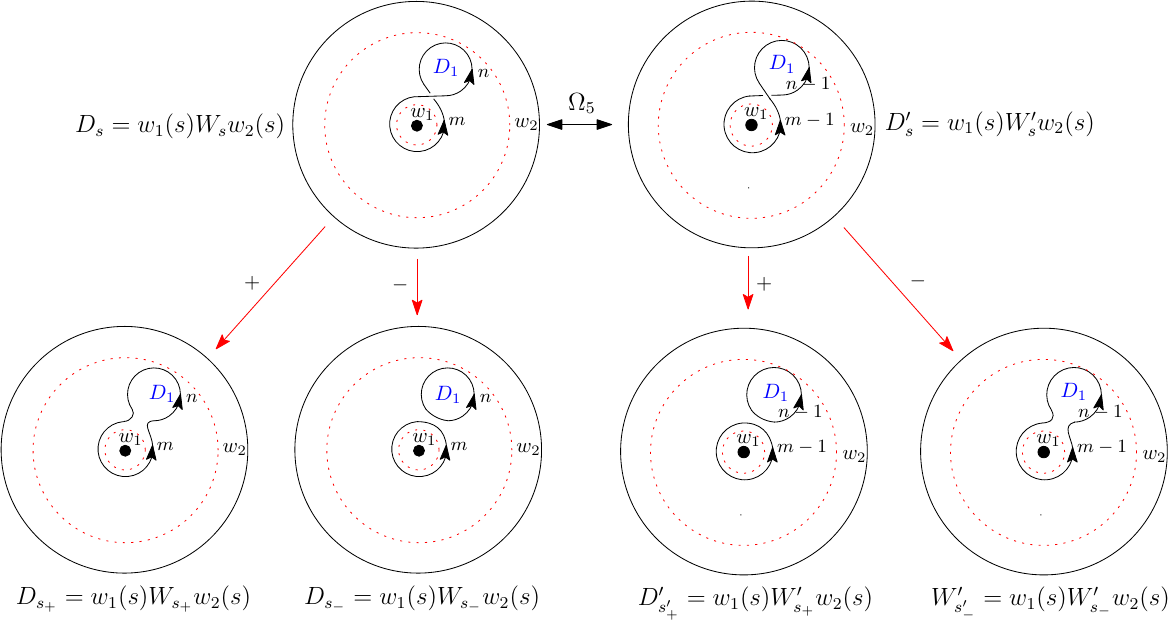}
\caption{$D_{s} = w_{1}(s)W_{s}w_{2}(s)$ and $D'_{s}= w_{1}(s)W'_{s}w_{2}(s)$ in ${\bf A}^{2}$ related by $\Omega_{5}$-move}
\label{fig:Omega5Kauffman_3}
\end{figure}

For $D_{s}$ and $D'_{s}$ in Figure~\ref{fig:Omega5Kauffman_3}, suppose that
\begin{equation*}
\langle D_{1} \rangle_{r} = \sum_{i_{1}=0}^{k_{1}}r_{1,i_{1}}\lambda^{i_{1}}, \,\, \llangle w_{1}(s) \rrangle_{\Gamma} = \sum_{i_{2}=0}^{k_{2}}r_{2,i_{2}}u_{2,i_{2}},\ \text{and}\ \llangle w_{2}(s) \rrangle_{\Gamma} = \sum_{i_{3}=0}^{k_{3}}r_{3,i_{3}}u_{3,i_{3}},
\end{equation*}
then it follows from \eqref{eqn:H3_InvariantOmega5_3} that
\begin{eqnarray*}
&&\llangle \llangle (w_{1}(s)(A W_{s_{+}} + A^{-1} W_{s_{-}} -A W_{s'_{+}} - A W_{s'_{-}})w_{2}(s) \rrangle_{\Gamma}\rrangle_{c}\\
&=&\sum_{i_{1} = 0}^{k_{1}}r_{1,i_{1}}\llangle \llangle w_{1}(s)\rrangle_{\Gamma} (A\lambda^{i_{1}}x_{n+m} +A^{-1}x_{m}P_{n,i_{1}} -Ax_{m-1}P_{n-1,i_{1}} - A^{-1}\lambda^{i_{1}}x_{n+m-2})\llangle w_{2}(s) \rrangle_{\Gamma}\rrangle_{c}\\
&=&\sum_{i_{1}= 0}^{k_{1}}\sum_{i_{2}=0}^{k_{2}}\sum_{i_{3}=0}^{k_{3}}r_{1,i_{1}}r_{2,i_{2}}r_{3,i_{3}}\llangle u_{2,i_{2}}(A\lambda^{i_{1}}x_{n+m} +A^{-1}x_{m}P_{n,i_{1}} -Ax_{m-1}P_{n-1,i_{1}} - A^{-1}\lambda^{i_{1}}x_{n+m-2})u_{3,i_{3}}\rrangle_{c} \\
&=& 0.
\end{eqnarray*}
Analogous argument can be applied for $D_{s}$ and $D'_{s}$ in Figure~\ref{fig:Omega5Kauffman_4} and Figure~\ref{fig:Omega5Kauffman_5} by using \eqref{eqn:H3_InvariantOmega5_4} and \eqref{eqn:H3_InvariantOmega5_5}, respectively. Consequently, $\psi_{c}(D-D') = 0$ for $D$ and $D'$ related by $\Omega_{1}-\Omega_{5}$ moves.

\begin{figure}[ht]
\centering
\includegraphics[scale=0.6]{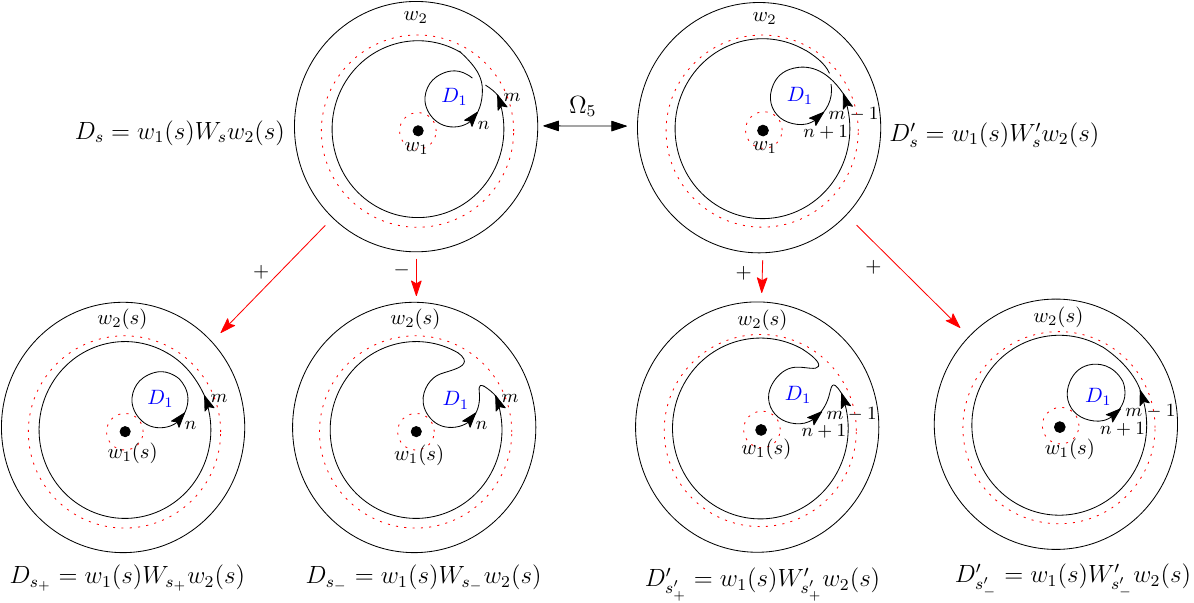}
\caption{$D_{s} = w_{1}(s)W_{s}w_{2}(s)$ and $D'_{s}= w_{1}(s)W'_{s}w_{2}(s)$ in ${\bf A}^{2}$ related by $\Omega_{5}$-move}
\label{fig:Omega5Kauffman_4}
\end{figure}

\begin{figure}[ht]
\centering
\includegraphics[scale=0.6]{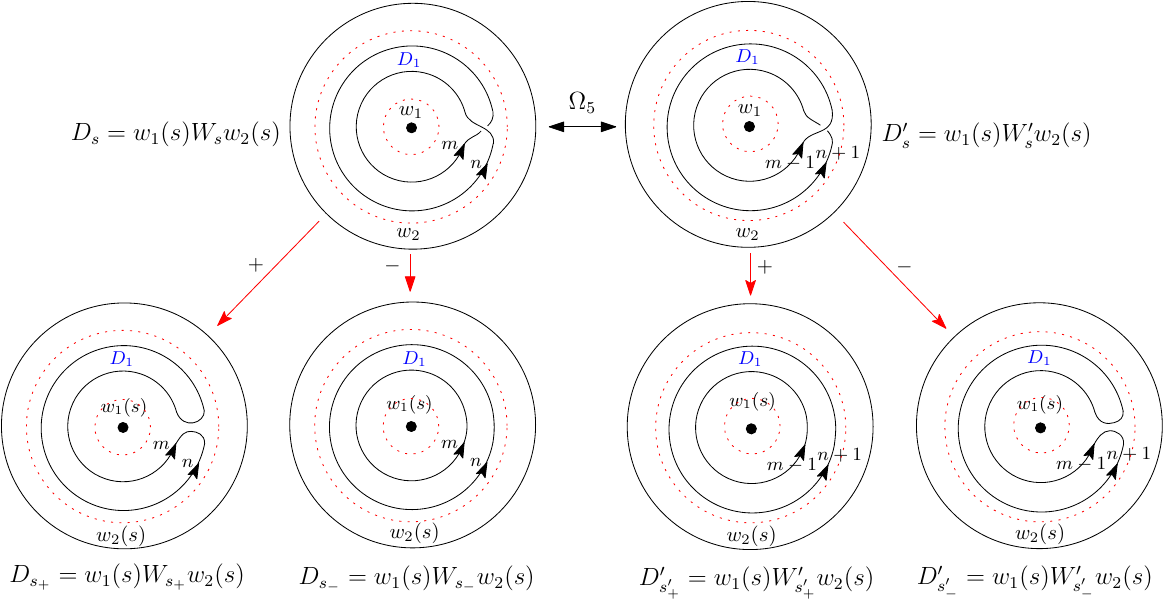}
\caption{$D_{s} = w_{1}(s)W_{s}w_{2}(s)$ and $D'_{s}= w_{1}(s)W'_{s}w_{2}(s)$ in ${\bf A}^{2}$ related by $\Omega_{5}$-move}
\label{fig:Omega5Kauffman_5}
\end{figure}
\end{proof} 

\begin{theorem}
\label{thm:basis_Sigma_c_for_A2S1}
KBSM of ${\bf A}^{2} \times S^{1}$ is a free $R$-module with basis that consists ambient isotopy classes of generic framed links with arrow diagrams in $\Sigma_{c}$, i.e.,
\begin{equation*}
\mathcal{S}_{2,\infty}({\bf A}^{2}\times S^{1};R,A) \cong R\Sigma_{c}.   
\end{equation*}
\end{theorem}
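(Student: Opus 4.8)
The plan is to exhibit $\psi_{c}$ as an explicit inverse to the natural map sending a diagram in $\Sigma_{c}$ to its skein class. Write $\pi\colon R\mathcal{D}({\bf A}^{2}) \to S\mathcal{D}({\bf A}^{2})$ for the quotient map and let $\Phi\colon R\Sigma_{c} \to S\mathcal{D}({\bf A}^{2})$ be its restriction to the free submodule $R\Sigma_{c}$, i.e.\ $\Phi(w) = \pi(w)$ for $w \in \Sigma_{c}$. By Lemma~\ref{lem:main_lemma_for_h} the homomorphism $\psi_{c}$ is invariant under $\Omega_{1}-\Omega_{5}$-moves, and by the computations preceding that lemma $\psi_{c}$ annihilates both types of skein generators $D_{+} - AD_{0} - A^{-1}D_{\infty}$ and $D \sqcup T_{1} + (A^{2}+A^{-2})D$. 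Hence $\psi_{c}$ kills $S_{2,\infty}({\bf A}^{2})$ and descends to a homomorphism $\bar{\psi}_{c}\colon S\mathcal{D}({\bf A}^{2}) \to R\Sigma_{c}$ with $\bar{\psi}_{c}\circ\pi = \psi_{c}$. I would then show that $\Phi$ and $\bar{\psi}_{c}$ are mutually inverse.

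For surjectivity of $\Phi$, the key observation is that $\pi(D) = \pi(\psi_{c}(D))$ for every arrow diagram $D$ in ${\bf A}^{2}$. Indeed, each of the three reductions composing $\psi_{c}$ preserves the class in $S\mathcal{D}({\bf A}^{2})$: the state sum $\llangle \cdot \rrangle$ only rewrites $D$ using the Kauffman bracket smoothing $D_{+} = AD_{0} + A^{-1}D_{\infty}$ together with the trivial-circle relation; the bracket $\llangle \cdot \rrangle_{\Gamma}$ applies $\langle \cdot \rangle_{r}$ to the crossingless pieces $D_{i}\subset {\bf D}^{2}$, which again invokes only skein relations inside the disk; and $\llangle \cdot \rrangle_{c}$ applies the identities of Lemma~\ref{lem:Gamma_c}, all of which hold in $S\mathcal{D}({\bf A}^{2})$. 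Since $\psi_{c}(D) \in R\Sigma_{c}$, this yields $\pi(D) = \Phi(\psi_{c}(D))$, so every skein class lies in the image of $\Phi$ and $\Phi$ is surjective.

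For injectivity I would verify that $\bar{\psi}_{c}\circ\Phi = \mathrm{id}_{R\Sigma_{c}}$, for which it suffices to check $\psi_{c}(w) = w$ for each $w \in \Sigma_{c}$. This is immediate from the construction: a word $w = \lambda^{n}$ or $w = x_{c+\varepsilon}\lambda^{n}(x_{c})^{k}$ has no crossings and no trivial circles, so the single Kauffman state gives $\llangle w \rrangle = w$; its crossingless pieces are powers of $\lambda$, on which $\langle\cdot\rangle_{r}$ acts as the identity, so $\llangle w \rrangle_{\Gamma} = w$; and finally $w \in \Sigma_{c}$ is fixed by part (b) in the definition of $\llangle \cdot \rrangle_{c}$, giving $\llangle w \rrangle_{c} = w$. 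Hence $\bar{\psi}_{c}(\Phi(w)) = \psi_{c}(w) = w$, so $\Phi$ has a left inverse and is injective.

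Combining the two claims, $\Phi\colon R\Sigma_{c} \to S\mathcal{D}({\bf A}^{2})$ is an isomorphism with inverse $\bar{\psi}_{c}$; since $S\mathcal{D}({\bf A}^{2})$ is the KBSM of ${\bf A}^{2}\times S^{1}$ and $R\Sigma_{c}$ is free on $\Sigma_{c}$, this proves $\mathcal{S}_{2,\infty}({\bf A}^{2}\times S^{1};R,A)\cong R\Sigma_{c}$ with the asserted basis. I expect essentially all of the difficulty to have already been absorbed into Lemma~\ref{lem:main_lemma_for_h}, namely the $\Omega_{5}$-invariance of $\psi_{c}$ resting on Lemmas~\ref{lem:annulus_type3}--\ref{lem:annulus_type5}; what remains here is bookkeeping, the only genuinely load-bearing points being the class-preservation $\pi(D)=\pi(\psi_{c}(D))$ and the normalization $\psi_{c}|_{\Sigma_{c}} = \mathrm{id}$, which together force $\psi_{c}$ to be a two-sided inverse and thereby deliver both spanning and independence at once.
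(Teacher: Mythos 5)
Your proposal is correct and follows essentially the same route as the paper's own proof: the paper likewise defines the natural map $\varphi(w)=[w]$ on $R\Sigma_{c}$, uses the fact that $[D]=[\psi_{c}(D)]$ (which you spell out more explicitly) for surjectivity, and uses the descended map $\hat{\psi}_{c}$ together with the normalization $\psi_{c}|_{R\Sigma_{c}}=\mathrm{id}$ to get injectivity, concluding that both maps are isomorphisms. The only cosmetic difference is that you make the class-preservation step $\pi(D)=\pi(\psi_{c}(D))$ and the identity $\psi_{c}(w)=w$ for $w\in\Sigma_{c}$ explicit, where the paper asserts them directly from the definitions.
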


\begin{proof}
Since $\Sigma_{c} \subset \mathcal{D}({\bf A}^{2})$, we define a homomorphism of $R$-modules
\begin{equation*}
\varphi: R\Sigma_{c} \to  S\mathcal{D}({\bf A}^{2}), \,\,\varphi(w) = [w] 
\end{equation*}
which assigns to $w \in \Sigma_{c}$ its equivalence class $[w]$ modulo the submodule $S_{2,\infty}({\bf A}^{2})$. For any arrow diagram $D$ in ${\bf A}^{2}$ and let $W = \psi_{c}(D)$, then $\varphi(W) = [W] = [D]$. Since $S\mathcal{D}({\bf A}^{2})$ is generated by $\mathcal{D}({\bf A}^{2})$, $\varphi$ is surjective.

We show that $\varphi$ is also injective. Since $\Sigma_{c} \subset \mathcal{D}({\bf A}^{2})$, by Lemma~\ref{lem:main_lemma_for_h}, $\psi_{c} : R\mathcal{D}({\bf A}^{2}) \to R\Sigma_{c}$ is an epimorphism of free $R$-modules. Furthermore, as we noted it before
\begin{eqnarray*}
&&\psi_{c}(D_{+} - A D_{0} - D_{\infty}) = 0 \quad \text{and} \quad \psi_{c}(D\sqcup T_{1} +(A^{-2}+A^{2})D) = 0,
\end{eqnarray*}
for any skein triple $D_{+}$, $D_{0}$, $D_{\infty}$ of arrow diagrams and disjoint unions $D\sqcup T_{1}$ (see Figure~\ref{fig:SkeinTripleOfDiagrams}). Consequently, $\psi_{c}$ descends to a surjective homomorphism of $R$-modules
\begin{equation*}
\hat{\psi}_{c}: S\mathcal{D}({\bf A}^{2}) \to R\Sigma_{c},
\end{equation*}
which assigns $\psi_{c}(D)$ for a generator $[D]$. Since, for each $W\in R\Sigma_{c}$, by the definition of $\psi_{c}$, $\psi_{c}(W) = W$, so
\begin{equation*}
(\hat{\psi}_{c} \circ \varphi)(W) = \hat{\psi_{c}}([W]) = \psi_{c}(W) = W.    
\end{equation*}
Therefore, $\varphi$ is also injective. Consequently, both $\varphi$ and $\hat{\psi}_{c}$ are isomorphisms. 

By Theorem~1 of \cite{GM2017}, there is a bijection between ambient isotopy classes of framed links in ${\bf A}^{2}\times S^{1}$ and equivalence classes of arrow diagrams in ${\bf A}^{2}$ modulo $\Omega_{1}-\Omega_{5}$ moves, consequently
\begin{equation*}
\mathcal{S}_{2,\infty}({\bf A}^{2}\times S^{1};R,A) \cong S\mathcal{D}({\bf A}^{2}) \underset{\hat{\psi}_{c}}{\cong} R\Sigma_{c}.  
\end{equation*}
\end{proof}

\section{Basis for KBSM of \texorpdfstring{$V(\beta,2)$}{V(\unichar{0946},2)}}
\label{s:basis_V}

Let $\mathcal{D}({\bf D}^{2}_{\beta})$ be the set of equivalence classes of arrow diagrams in ${\bf D}^{2}_{\beta}$, modulo $\Omega_{1}-\Omega_{5}$ and $S_{\beta}$-moves. We may regard $\mathcal{D}({\bf A}^{2})$ as a subset of $\mathcal{D}({\bf D}^{2}_{\beta})$ and consequently $\Gamma$ and
\begin{equation*}
\Sigma_{\nu} = \{ \lambda^{n}, x_{\nu}\lambda^{n}(x_{\nu})^{k}, x_{\nu+1}\lambda^{n}(x_{\nu})^{k} \mid \ n,k \geq 0 \}, \ \nu = \lfloor \frac{\beta}{2} \rfloor,
\end{equation*}
are also subsets of $\mathcal{D}({\bf D}^{2}_{\beta})$. Let $S_{2,\infty}({\bf D}^{2}_{\beta})$ be a submodule of $R\mathcal{D}({\bf D}^{2}_{\beta})$ generated by 
\begin{equation*}
D_{+} - AD_{0}-A^{-1}D_{\infty} \quad \text{and} \quad D\sqcup T_{1}+(A^{-2}+A^{2})D, 
\end{equation*}
for all skein triples $D_{+},D_{0}$, and $D_{\infty}$ of arrow diagrams in ${\bf D}^{2}_{\beta}$, and all disjoint unions $D\sqcup T_{1}$ of an arrow diagram $D$ and a trivial circle\footnote{A trivial circle in ${\bf D}^{2}_{\beta}$ is a curve equivalent modulo $\Omega_{1}-\Omega_{5}$ and $S_{\beta}$-moves to a curve with no arrows that bounds a $2$-disk which does not contain $\beta$.} (see Figure~\ref{fig:SkeinTripleOfDiagrams}). 
In this section, we show that
\begin{equation*}
R\Sigma^{\prime}_{\nu} \cong S\mathcal{D}({\bf D}^{2}_{\beta}) =  R\mathcal{D}({\bf D}^{2}_{\beta})/S_{2,\infty}({\bf D}^{2}_{\beta}),   
\end{equation*}
where 
\begin{equation*}
\Sigma^{\prime}_{\nu} = \{\lambda^{n}, x_{\nu}\lambda^{n} \mid n \geq 0\}.
\end{equation*}

\begin{lemma}
\label{lem:Sigma_2_beta} 
Let $w\in \Sigma_{\nu}$, then in $S\mathcal{D}({\bf D}^{2}_{\beta})$,
\begin{enumerate}
\item for $w = x_{\nu+1}\lambda^{n}(x_{\nu})^{k}$,
\begin{equation*}
w = -A^{3}x_{\nu}\lambda^{n}(x_{\nu})^{k};
\end{equation*}
\item for $w = x_{\nu}\lambda^{n}(x_{\nu})^{k}$ with $k \geq 1$,
\begin{equation*}
w = A^{-1}t_{-1,n}(x_{\nu})^{k-1} - A^{-2}t_{0,n}(x_{\nu})^{k-1}.
\end{equation*}
\end{enumerate}
\end{lemma}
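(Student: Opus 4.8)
The plan is to derive both identities from the $2$-handle slide, i.e.\ the $S_{\beta}$-move of Lemma~\ref{lem:SbetaMoveOnD2Beta}, applied to the curve(s) lying innermost in the word (adjacent to the marked point $\beta$), and then to resolve the resulting diagram back into elements of $\Gamma$ using $\langle\,\cdot\,\rangle_{r}$ and the recursions \eqref{eqn:Pn}, \eqref{eqn:Pnk}, \eqref{eqn:Pnk_1} together with the $x_{n}$-recursion $x_{n}=A\lambda x_{n+1}-A^{2}x_{n+2}$. Neither identity is a consequence of the $\Omega$-moves alone, so the essential input in each case must be a single application of the $S_{\beta}$-move of Figure~\ref{fig:ArrowDiagramForHandleSlidingSBeta}; everything afterward is the diagrammatic calculus already developed in Section~\ref{s:basis_A2TimesS1}. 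The role of $\nu=\lfloor\beta/2\rfloor$ is that the slide across the $(2,\beta)$-curve acts on the arrow count of the innermost strand by the reflection $m\mapsto 2\nu+1-m$, which interchanges exactly the two central curves $x_{\nu}$ and $x_{\nu+1}$; I expect the even/odd case distinction hidden in $\lfloor\beta/2\rfloor$ to be the first thing to pin down.

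For part (1) I would slide the innermost strand $x_{\nu+1}$ across the $(2,\beta)$-handle. Under the reflection $m\mapsto 2\nu+1-m$ the index $\nu+1$ is carried to $\nu$, and tracking the framing change produced by the slide (one kink, contributing the factor $-A^{3}$ familiar from $t_{1}=-A^{3}\lambda$) gives $x_{\nu+1}\lambda^{n}(x_{\nu})^{k}=-A^{3}x_{\nu}\lambda^{n}(x_{\nu})^{k}$, with the tail $\lambda^{n}(x_{\nu})^{k}$ untouched because it is disjoint from the slide region. The only point to verify is that the innermost strand can be isotoped to the marked point and back without creating crossings with $\lambda^{n}(x_{\nu})^{k}$, which holds precisely because $x_{\nu+1}$ is innermost.

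For part (2), with $k\ge 1$ so that a second copy of $x_{\nu}$ is present, I would write $x_{\nu}\lambda^{n}(x_{\nu})^{k}=x_{\nu}\lambda^{n}x_{\nu}(x_{\nu})^{k-1}$ and slide the innermost $x_{\nu}$ across the handle, which sends it to an $x_{\nu+1}$ sitting just inside its neighbour (equivalently this is part (1) divided by $-A^{3}$). The slid strand and its neighbour now form the pattern $x_{\nu}\lambda^{n}x_{\nu+1}$ governed by Lemma~\ref{lem:Gamma_c}(4) (the $\Omega_{2}$--$\Omega_{5}$ relation of Figure~\ref{fig:Omega2and5ForBracketC}), which with $w'=\emptyset$ yields
\begin{equation*}
x_{\nu}\lambda^{n}x_{\nu+1}(x_{\nu})^{k-1}=-A^{-1}\lambda\, t_{0,n}(x_{\nu})^{k-1}+2\,t_{-1,n}(x_{\nu})^{k-1}+A^{-2}x_{\nu+1}\lambda^{n}(x_{\nu})^{k}.
\end{equation*}
Applying part (1) to the residual term, $x_{\nu+1}\lambda^{n}(x_{\nu})^{k}=-A^{3}x_{\nu}\lambda^{n}(x_{\nu})^{k}$, converts this into a single linear relation between $x_{\nu}\lambda^{n}(x_{\nu})^{k}$ and $x_{\nu}\lambda^{n}x_{\nu+1}(x_{\nu})^{k-1}$. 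I would then close the system with a second, independent evaluation of $x_{\nu}\lambda^{n}x_{\nu+1}(x_{\nu})^{k-1}$ (again via the slide, or via the symmetry relations of Lemma~\ref{lem:rel_xm}) and solve for $x_{\nu}\lambda^{n}(x_{\nu})^{k}$; the central curves $t_{-1,n}$ and $t_{0,n}$ should survive with coefficients $A^{-1}$ and $-A^{-2}$, while the $t_{j,n}$ with $j\le -2$ cancel.

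The main obstacle in both parts is the faithful bookkeeping of the $S_{\beta}$-move itself: reading off from Figure~\ref{fig:ArrowDiagramForHandleSlidingSBeta} exactly how the arrows of the slid strand recombine with the $(2,\beta)$-curve, confirming that the net framing contribution is $-A^{\pm3}$ in part (1), and verifying that two innermost $x_{\nu}$-strands compress to precisely $t_{-1,n}$ and $t_{0,n}$ in part (2). Obtaining the \emph{second} relation needed to solve the linear system in part (2) is where I expect the computation to be most delicate, since a naive second slide reintroduces $x_{\nu+2}$ (and higher) curves that must first be eliminated by Lemma~\ref{lem:Gamma_c}(6) before the system closes.
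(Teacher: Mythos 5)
Your part~(1) is essentially the paper's own proof: a single $S_{\beta}$-move applied to the innermost curve, after which the one crossing of the slid diagram is resolved by the skein relation, producing the kink factor $A+A^{-1}(-A^{2}-A^{-2})=-A^{-3}$ (equivalently your $-A^{3}$, depending on which of the two curves is slid); this is exactly the computation accompanying Figure~\ref{fig:V_beta_2_case2_new}. One bookkeeping point, however: the slid curve remains \emph{innermost}, so the move converts $x_{\nu}\lambda^{n}(x_{\nu})^{k}$ into a multiple of $x_{\nu+1}\lambda^{n}(x_{\nu})^{k}$, never into a word in which the new index-$(\nu+1)$ curve lies outside its former neighbour.

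This matters because part~(2) of your proposal hinges on the opposite. Sliding the innermost $x_{\nu}$ of $x_{\nu}\lambda^{n}x_{\nu}(x_{\nu})^{k-1}$ does \emph{not} create the pattern $x_{\nu}\lambda^{n}x_{\nu+1}$; it merely reproduces part~(1). Your displayed identity is a correct instance of Lemma~\ref{lem:Gamma_c}(4), but it concerns the different word $Y=x_{\nu}\lambda^{n}x_{\nu+1}(x_{\nu})^{k-1}$, so after inserting part~(1) you hold exactly one linear equation in the two unknowns $X_{n}=x_{\nu}\lambda^{n}(x_{\nu})^{k}$ and $Y$ (for fixed $k$), as you concede; and the deferred ``second, independent evaluation'' cannot be produced by your tools. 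Sliding the innermost curve of $Y$ is again a part-(1)-type move and gives $Y=-A^{-3}x_{\nu+1}\lambda^{n}x_{\nu+1}(x_{\nu})^{k-1}$; expanding the right-hand side by Lemma~\ref{lem:Gamma_c}(4), reducing $x_{\nu+2}\lambda^{n}(x_{\nu})^{k}$ by Lemma~\ref{lem:Gamma_c}(6), and applying part~(1) once more, your two equations combine into
\begin{equation*}
(1+A^{4})X_{n}+A^{2}X_{n+1}=\bigl(-A^{-1}\lambda P_{-1,n}+2P_{-2,n}-A^{2}\lambda P_{0,n}+2A^{3}P_{-1,n}\bigr)(x_{\nu})^{k-1},
\end{equation*}
and every further slide of this kind only extends the chain $(1+A^{4})X_{n}+A^{2}X_{n+1}=c_{n}$, $n=0,1,2,\dots$, with right-hand sides supported on words with $k-1$ outer $x_{\nu}$-curves. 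Such a chain determines each $X_{n+1}$ from $X_{n}$ but never determines $X_{0}$: one free parameter survives at every level, so the system does not close. Lemma~\ref{lem:rel_xm} cannot help either, since it encodes identities already valid in $S\mathcal{D}({\bf A}^{2})$ and therefore contributes no relation in $S\mathcal{D}({\bf D}^{2}_{\beta})$ beyond the annulus calculus. The underlying reason is that every slide you invoke is applied to a curve \emph{encircling} $\beta$, and such a slide relates words having the same number of $\beta$-encircling curves, whereas part~(2) lowers that number by two. The paper supplies exactly the missing ingredient with a genuinely different $S_{\beta}$-move: it slides the \emph{contractible} curve $t_{0,n}$ sitting innermost in $t_{0,n}(x_{\nu})^{k-1}$ (Figure~\ref{fig:V_beta_2_case1_new}), obtaining $t_{0,n}(x_{\nu})^{k-1}=At_{-1,n}(x_{\nu})^{k-1}+A^{-1}x_{\nu+1}\lambda^{n}(x_{\nu})^{k}$, which together with part~(1) yields part~(2) in one line. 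A slide performed on a curve that does not encircle $\beta$ is the essential step your proposal never invokes, and without it part~(2) is unreachable.
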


\begin{figure}[ht]
\centering
\includegraphics[scale=0.7]{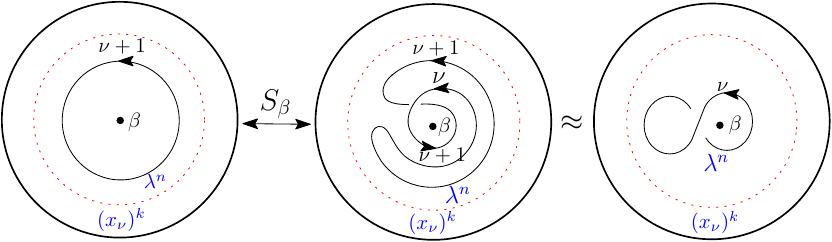}
\caption{Identity $x_{\nu+1}\lambda^{n}(x_{\nu})^{k} = -A^{3}x_{\nu}\lambda^{n}(x_{\nu})^{k}$}
\label{fig:V_beta_2_case2_new}
\end{figure}

\begin{proof}
For 1) we notice that, since arrow diagrams shown on the left and the right of Figure~\ref{fig:V_beta_2_case2_new} are related by an $S_{\beta}$-move on ${\bf D}^{2}_{\beta}$, 
\begin{equation*}
x_{\nu}\lambda^{n}(x_{\nu})^{k} = Ax_{\nu+1}\lambda^{n}(x_{\nu})^{k} + A^{-1}x_{\nu+1}(-A^{-2}-A^{2})\lambda^{n}(x_{\nu})^{k} = -A^{-3}x_{\nu+1}\lambda^{n}(x_{\nu})^{k}.
\end{equation*}

Analogously, for 2), since arrow diagrams shown on the left and the right hand side of Figure~\ref{fig:V_beta_2_case1_new} differ by an $S_{\beta}$-move, 
\begin{equation*}
t_{0,n}(x_{\nu})^{k-1} = At_{-1,n}(x_{\nu})^{k-1} + A^{-1}x_{\nu+1}\lambda^{n}(x_{\nu})^{k}.
\end{equation*}
Therefore, by 1)
\begin{equation*}
x_{\nu}\lambda^{n}(x_{\nu})^{k} = -A^{-3}x_{\nu+1}\lambda^{n}(x_{\nu})^{k} =
A^{-1}t_{-1,n}(x_{\nu})^{k-1} - A^{-2}t_{0,n}(x_{\nu})^{k-1}.
\end{equation*}
\end{proof}

\begin{figure}[ht]
\centering
\includegraphics[scale=0.7]{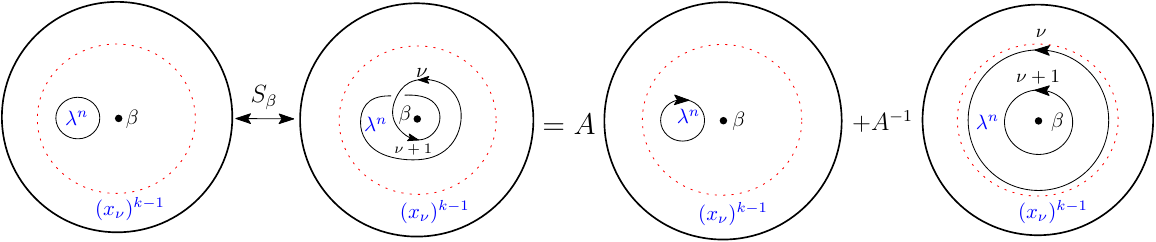}
\caption{Identity $t_{0,n}(x_{\nu})^{k-1} = At_{-1,n}(x_{\nu})^{k-1} + A^{-1}x_{\nu+1}\lambda^{n}(x_{\nu})^{k}$}
\label{fig:V_beta_2_case1_new}
\end{figure}

For curves $t_{-1,n}(x_{\nu})^{k-1}$ and $t_{0,n}(x_{\nu})^{k-1}$ on the right-hand side of 2) in Lemma~\ref{lem:Sigma_2_beta}, we use $\llangle \cdot \rrangle_{\Gamma}$ to express them as $\llangle t_{-1,n}(x_{\nu})^{k-1} \rrangle_{\Gamma} = P_{-1,n}(x_{\nu})^{k-1}$ and $\llangle t_{0,n}(x_{\nu})^{k-1} \rrangle_{\Gamma} = P_{0,n}(x_{\nu})^{k-1}$, respectively. Using $\llangle \cdot \rrangle_{\nu}$ we can further write them as elements of $R\Sigma_{\nu}$ with the number of $x_{\nu}$-curves $\leq k-1$. Consequently, using Lemma~\ref{lem:Sigma_2_beta}, $\llangle \cdot \rrangle_{\Gamma}$, and  $\llangle \cdot \rrangle_{\nu}$, each $w\in \Sigma_{\nu}$ can be represented as a $R$-linear combination of $x_{\nu +\varepsilon}\lambda^{n}$ with $\varepsilon = 0,1$ and $\lambda^{n}$, $n\geq 0$. Furthermore, by using 1) in Lemma~\ref{lem:Sigma_2_beta}, we further reduce each $x_{\nu +1}\lambda^{n}$ into $x_{\nu}\lambda^{n}$. As a result, each $w\in \Sigma_{\nu}$ can be represented as an $R$-linear combination of elements in $\Sigma^{\prime}_{\nu}$. Hence, for $w\in R\Gamma$, we define an extension $\llangle \cdot \rrangle_{\Sigma^{\prime}_{\nu}}$ of $\llangle \cdot \rrangle_{\nu}$ as follows:
\begin{enumerate}[(a)]
\item if $w = \sum_{w' \in S} r_{w'}w'$, $S$ is a finite subset of $\Gamma$ with at least two elements, let 
\begin{equation*}
\llangle w \rrangle_{\Sigma^{\prime}_{\nu}} = \sum_{w'\in S} r_{w'}\llangle w' \rrangle_{\Sigma^{\prime}_{\nu}};
\end{equation*}
\item for $w \in \Sigma^{\prime}_{\nu}$,
\begin{equation*}
\llangle w \rrangle_{\Sigma^{\prime}_{\nu}} = w;
\end{equation*}
\item for $w = w'x_{m}\lambda^{n}(x_{\nu})^{k}$, where $w'$ contains at least one $x$ curve and $n \geq 1$,
\begin{equation*}
\llangle w \rrangle_{\Sigma^{\prime}_{\nu}} = A \llangle w'x_{m-1}\lambda^{n-1}(x_{\nu})^{k} \rrangle_{\Sigma^{\prime}_{\nu}} + A^{-1} \llangle w'x_{m+1}\lambda^{n-1}(x_{\nu})^{k} \rrangle_{\Sigma^{\prime}_{\nu}};
\end{equation*}
\item for $w = w'x_{m}(x_{\nu})^{k}$, where $w'$ contains at least one $x$ curve and $m > \nu + 1$,
\begin{equation*}
\llangle w \rrangle_{\Sigma^{\prime}_{\nu}} = A^{-1} \llangle w'\lambda x_{m-1}(x_{\nu})^{k} \rrangle_{\Sigma^{\prime}_{\nu}} - A^{-2} \llangle w'x_{m-2}(x_{\nu})^{k} \rrangle_{\Sigma^{\prime}_{\nu}};
\end{equation*}
\item for $w = w'x_{m}(x_{\nu})^{k}$, where $w'$ contains at least one $x$ curve and $m < \nu$,
\begin{equation*}
\llangle w \rrangle_{\Sigma^{\prime}_{\nu}} = A \llangle w'\lambda x_{m+1}(x_{\nu})^{k} \rrangle_{\Sigma^{\prime}_{\nu}} - A^{2} \llangle w'x_{m+2}(x_{\nu})^{k} \rrangle_{\Sigma^{\prime}_{\nu}};
\end{equation*}
\item for $w = w'x_{m}\lambda^{n}x_{\nu+1}(x_{\nu})^{k}$,
\begin{equation*}
\llangle w \rrangle_{\Sigma^{\prime}_{\nu}} = -A^{-1} \llangle w'\lambda P_{\nu-m,n}(x_{\nu})^{k} \rrangle_{\Sigma^{\prime}_{\nu}} + 2 \llangle w'P_{\nu-1-m,n}(x_{\nu})^{k} \rrangle_{\Sigma^{\prime}_{\nu}} + A^{-2} \llangle w'x_{m+1}\lambda^{n}(x_{\nu})^{k+1} \rrangle_{\Sigma^{\prime}_{\nu}};
\end{equation*}
\item for $w = \lambda^{n}x_{m}\lambda^{n'}(x_{\nu})^{k}$ with $n \geq 1$,
\begin{equation*}
\llangle w \rrangle_{\Sigma^{\prime}_{\nu}} = A \llangle \lambda^{n-1}x_{m+1}\lambda^{n'}(x_{\nu})^{k} \rrangle_{\Sigma^{\prime}_{\nu}} + A^{-1} \llangle \lambda^{n-1}x_{m-1}\lambda^{n'}(x_{\nu})^{k} \rrangle_{\Sigma^{\prime}_{\nu}};
\end{equation*}
\item for $w = x_{m}\lambda^{n}(x_{\nu})^{k}$ with $m > \nu + 1$,
\begin{equation*}
\llangle w \rrangle_{\Sigma^{\prime}_{\nu}} = A \llangle x_{m-1}\lambda^{n+1}(x_{\nu})^{k} \rrangle_{\Sigma^{\prime}_{\nu}} - A^{2} \llangle x_{m-2}\lambda^{n}(x_{\nu})^{k} \rrangle_{\Sigma^{\prime}_{\nu}};
\end{equation*}
\item for $w = x_{m}\lambda^{n}(x_{\nu})^{k}$ with $m < \nu$,
\begin{equation*}
\llangle w \rrangle_{\Sigma^{\prime}_{\nu}} = A^{-1} \llangle x_{m+1}\lambda^{n+1}(x_{\nu})^{k} \rrangle_{\Sigma^{\prime}_{\nu}} - A^{-2} \llangle x_{m+2}\lambda^{n}(x_{\nu})^{k} \rrangle_{\Sigma^{\prime}_{\nu}};
\end{equation*}
\item for $w = x_{\nu+1}\lambda^{n}(x_{\nu})^{k}$,
\begin{equation*}
\llangle w \rrangle_{\Sigma^{\prime}_{\nu}} = -A^{3}\llangle x_{\nu}\lambda^{n}(x_{\nu})^{k} \rrangle_{\Sigma^{\prime}_{\nu}};
\end{equation*}
\item for $w = x_{\nu}\lambda^{n}(x_{\nu})^{k}$ with $k \geq 1$,
\begin{equation*}
\llangle w \rrangle_{\Sigma^{\prime}_{\nu}} = A^{-1}\llangle P_{-1,n}(x_{\nu})^{k-1} \rrangle_{\Sigma^{\prime}_{\nu}} - A^{-2}\llangle P_{0,n}(x_{\nu})^{k-1} \rrangle_{\Sigma^{\prime}_{\nu}}. 
\end{equation*}
\end{enumerate}

Given an arrow diagram $D$ in ${\bf D}^{2}_{\beta}$, we first apply $\llangle \cdot \rrangle$ to express $D$ as a $R$-linear combination of diagrams without crossings $w$. For each such $w$ (hence also entire linear combination) we are then able to apply $\llangle \cdot \rrangle_{\Gamma}$ to write it as a $R$-linear combination of elements $u$ in $\Gamma$. Since for each such $u$ we can use $\llangle \cdot \rrangle_{\Sigma_{\nu}^{\prime}}$ to express it as a $R$-linear combination of elements $\Sigma_{\nu}^{\prime}$. Therefore, for each arrow diagram $D$ we can assign an element of the free $R$-module $R\Sigma_{\nu}^{\prime}$ given by
\begin{equation*}
\phi_{\beta}(D) = \llangle \llangle \llangle D \rrangle \rrangle_{\Gamma} \rrangle_{\Sigma_{\nu}^{\prime}}.
\end{equation*}
It is clear that, for any skein triple $D_{+}$, $D_{0}$, $D_{\infty}$, and disjoint unions $D\sqcup T_{1}$ (see Figure~\ref{fig:SkeinTripleOfDiagrams}),
\begin{equation*}
\phi_{\beta}(D_{+} - A D_{0} - A^{-1} D_{\infty}) = 0\,\,\text{and}\,\,\phi_{\beta}( D\sqcup T_{1}  + (A^{2}+A^{-2}) D) = 0,
\end{equation*}
where $T_{1}$ is a trivial circle. We will show that if arrow diagrams $D$ and $D'$ are related on ${\bf D}^{2}_{\beta}$ by $\Omega_{1}-\Omega_{5}$ and $S_{\beta}$-moves, then $\phi_{\beta}(D - D') = 0$. 

Since items a) -- i) included in the definition of $\llangle \cdot \rrangle_{\Sigma^{\prime}_{\nu}}$ define $\llangle \cdot \rrangle_{\nu}$, as a consequence of Lemma~\ref{lem:main_lemma_for_h}, we state for a reference the following property of $\phi_{\beta}$.

\begin{lemma}
\label{lem:lemma_for_h4}
If arrow diagrams $D$ and $D'$ in ${\bf D}^{2}_{\beta}$ are related by $\Omega_{1}-\Omega_{5}$ moves then
\begin{equation*}
\phi_{\beta}(D-D') = 0.
\end{equation*}
\end{lemma}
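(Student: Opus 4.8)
The plan is to reduce the statement to the annulus invariance already established in Lemma~\ref{lem:main_lemma_for_h}, by exhibiting $\phi_{\beta}$ as a composition of $\psi_{\nu}$ with one further $R$-linear reduction. Concretely, I would introduce the $R$-linear map $\rho : R\Sigma_{\nu} \to R\Sigma^{\prime}_{\nu}$ obtained by restricting $\llangle \cdot \rrangle_{\Sigma^{\prime}_{\nu}}$ to $R\Sigma_{\nu}$, and then prove the factorization $\phi_{\beta} = \rho \circ \psi_{\nu}$ on $R\mathcal{D}({\bf D}^{2}_{\beta})$. Here every arrow diagram in ${\bf D}^{2}_{\beta}$ is regarded as a diagram in ${\bf A}^{2}$ under the identification $\mathcal{D}({\bf A}^{2}) \subset \mathcal{D}({\bf D}^{2}_{\beta})$, and the $\Omega_{1}-\Omega_{5}$ moves on ${\bf D}^{2}_{\beta}$ are exactly those supported away from the marked point $\beta$, i.e. the $\Omega_{1}-\Omega_{5}$ moves on ${\bf A}^{2}$ to which Lemma~\ref{lem:main_lemma_for_h} applies.

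The heart of the argument is the factorization, which I would establish by comparing the two reduction schemes stage by stage. Both $\phi_{\beta}$ and $\psi_{\nu}$ begin with the identical maps $\llangle \cdot \rrangle$ and $\llangle \cdot \rrangle_{\Gamma}$, so for a diagram $D$ they produce the same element $u = \llangle \llangle D \rrangle \rrangle_{\Gamma} \in R\Gamma$; the two maps differ only in their final stage. It therefore suffices to show that for every $u \in R\Gamma$ one has $\llangle u \rrangle_{\Sigma^{\prime}_{\nu}} = \llangle \llangle u \rrangle_{\nu} \rrangle_{\Sigma^{\prime}_{\nu}}$. This is where I would invoke the observation recorded just before the lemma: rules (a)--(i) in the definition of $\llangle \cdot \rrangle_{\Sigma^{\prime}_{\nu}}$ are verbatim the rules defining $\llangle \cdot \rrangle_{\nu}$, while the two extra rules (j) and (k) fire precisely on the generators in $\Sigma_{\nu}\setminus \Sigma^{\prime}_{\nu}$, namely $x_{\nu+1}\lambda^{n}(x_{\nu})^{k}$ and $x_{\nu}\lambda^{n}(x_{\nu})^{k}$ with $k \geq 1$. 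Hence the reduction $\llangle \cdot \rrangle_{\Sigma^{\prime}_{\nu}}$ may be organized so as to first apply only (a)--(i), landing in $R\Sigma_{\nu}$ and reproducing $\llangle u \rrangle_{\nu}$ (which for $u = \llangle \llangle D \rrangle \rrangle_{\Gamma}$ is exactly $\psi_{\nu}(D)$), and only afterwards apply (j), (k) together with the re-reductions they trigger, to eliminate the leftover generators. Since $\llangle \cdot \rrangle_{\nu}$ fixes $\Sigma_{\nu}$ by its rule (b), this reorganization yields exactly $\rho \circ \psi_{\nu}$.

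Granting the factorization, the conclusion is immediate: if $D$ and $D'$ are related by $\Omega_{1}-\Omega_{5}$ moves, then Lemma~\ref{lem:main_lemma_for_h} gives $\psi_{\nu}(D) = \psi_{\nu}(D')$ in $R\Sigma_{\nu}$, whence $\phi_{\beta}(D-D') = \rho(\psi_{\nu}(D) - \psi_{\nu}(D')) = \rho(0) = 0$. I expect the only genuinely delicate point to be the justification that the recursion defining $\llangle \cdot \rrangle_{\Sigma^{\prime}_{\nu}}$ can indeed be routed through $R\Sigma_{\nu}$ in the way described, i.e. that the last-stage reduction commutes with this stage ordering. This rests on the well-definedness (termination and confluence) of the reduction $\llangle \cdot \rrangle_{c}$ already built into the construction of $\psi_{c}$, together with the fact that (j) and (k) have supports disjoint from $\Sigma^{\prime}_{\nu}$. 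Once that bookkeeping is in place, no new skein-theoretic input beyond Lemma~\ref{lem:main_lemma_for_h} is required.
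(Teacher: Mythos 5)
Your proposal is correct and takes essentially the same route as the paper: the paper deduces this lemma in a single remark from Lemma~\ref{lem:main_lemma_for_h}, observing that items (a)--(i) in the definition of $\llangle \cdot \rrangle_{\Sigma^{\prime}_{\nu}}$ are exactly the rules defining $\llangle \cdot \rrangle_{\nu}$, which is precisely your factorization $\phi_{\beta} = \rho \circ \psi_{\nu}$. Your write-up merely makes explicit the bookkeeping the paper leaves implicit, namely that rules (j) and (k) fire only on $\Sigma_{\nu}\setminus\Sigma^{\prime}_{\nu}$, disjointly from (c)--(i), so the reduction can be routed through $R\Sigma_{\nu}$.
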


Furthermore, for $\llangle \cdot \rrangle_{\Sigma^{\prime}_{\nu}}$ one can prove results analogous to Lemma~\ref{lem:rel_xm} and Lemma~\ref{lem:annulus_for_any_kn} which were obtained for $\llangle \cdot \rrangle_{\nu}$ in Section~\ref{s:basis_A2TimesS1}.

\begin{lemma}
\label{lem:rel_xm_bracket_2_beta}
For any $k \in \mathbb{Z}$ and $w_{1}x_{m}w_{2} \in \Gamma$ with $m \in \mathbb{Z}$,
\begin{equation}
\label{eqn:rel_xm_bracket_2_beta_1}
\llangle w_{1}x_{m}w_{2} \rrangle_{\Sigma^{\prime}_{\nu}} = -A^{m-k}\llangle w_{1}x_{k}Q_{m-k-1}w_{2} \rrangle_{\Sigma^{\prime}_{\nu}} + A^{m-k-1}\llangle w_{1}x_{k+1}Q_{m-k}w_{2} \rrangle_{\Sigma^{\prime}_{\nu}}
\end{equation}
and
\begin{equation*}
\llangle w_{1}x_{m}w_{2} \rrangle_{\Sigma^{\prime}_{\nu}} = -A^{k-m}\llangle w_{1}Q_{m-k-1}x_{k}w_{2} \rrangle_{\Sigma^{\prime}_{\nu}} + A^{k-m+1}\llangle w_{1}Q_{m-k}x_{k+1}w_{2} \rrangle_{\Sigma^{\prime}_{\nu}}.
\end{equation*}
\end{lemma}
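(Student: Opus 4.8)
The efficient route is to exploit the fact that $\llangle \cdot \rrangle_{\Sigma^{\prime}_{\nu}}$ is built on top of $\llangle \cdot \rrangle_{\nu}$. Indeed, items (a)--(i) in the definition of $\llangle \cdot \rrangle_{\Sigma^{\prime}_{\nu}}$ are word-for-word those defining $\llangle \cdot \rrangle_{\nu}$ (the case $c=\nu$ of Section~\ref{s:basis_A2TimesS1}), while the two new items (j) and (k) act only on the elements $x_{\nu+1}\lambda^{n}(x_{\nu})^{k}$ and $x_{\nu}\lambda^{n}(x_{\nu})^{k}$ of $\Sigma_{\nu}$, collapsing them exactly as in Lemma~\ref{lem:Sigma_2_beta}. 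I therefore want to show that the reduction factors as $\llangle \cdot \rrangle_{\Sigma^{\prime}_{\nu}} = \sigma \circ \llangle \cdot \rrangle_{\nu}$, where $\sigma\colon R\Sigma_{\nu} \to R\Sigma^{\prime}_{\nu}$ is the $R$-linear map determined on the basis $\Sigma_{\nu}$ by (j) and (k). This $\sigma$ is well defined by downward induction on the number of $x_{\nu}$-factors, since (k) lowers that number by one while producing only polynomials in $\lambda$ through the $P$-curves.

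Granting the factorization, the lemma is immediate: applying the $R$-linear map $\sigma$ to the identity of Lemma~\ref{lem:rel_xm} with $c=\nu$, and using $\sigma(\llangle u \rrangle_{\nu}) = \llangle u \rrangle_{\Sigma^{\prime}_{\nu}}$ for every $u \in R\Gamma$, one reads off both \eqref{eqn:rel_xm_bracket_2_beta_1} and its mirror directly from \eqref{eqn:rel_xm_1} and \eqref{eqn:rel_xm_2}. Here $Q_{m-k-1}w_{2}$ and $Q_{m-k}w_{2}$ are to be read as the evident $R[\lambda]$-linear combinations of elements of $\Gamma$, on each of which the factorization applies. In particular no new induction on $m$ or appeal to Definition~\ref{def:PolynomialsQ_n} is needed beyond what is already absorbed into Lemma~\ref{lem:rel_xm}.

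The main obstacle is thus the justification of the factorization itself: one must verify that performing the $S_{\beta}$-collapses (j) and (k) only after a complete (a)--(i)-reduction returns the same element of $R\Sigma^{\prime}_{\nu}$ as the interleaved recursion actually prescribed by the definition. This is an order-independence (local confluence) statement, and the only genuinely new interaction to check is that between item (f), which raises the power of $x_{\nu}$, and item (k), which lowers it; this interaction is reconciled by the identities \eqref{eqn:Pn}, \eqref{eqn:Pnk}, and \eqref{eqn:Pnk_1} in precisely the manner already used in the proof of Lemma~\ref{lem:annulus_type5}, so no calculation of real substance is added. If one prefers to avoid confluence entirely, the alternative is to re-run the scheme of Lemma~\ref{lem:rel_xm} verbatim for $\llangle \cdot \rrangle_{\Sigma^{\prime}_{\nu}}$, which requires only the local relation
\begin{equation*}
\llangle w_{1}x_{m}w_{2} \rrangle_{\Sigma^{\prime}_{\nu}} = A\llangle w_{1}\lambda x_{m+1}w_{2} \rrangle_{\Sigma^{\prime}_{\nu}} - A^{2}\llangle w_{1}x_{m+2}w_{2} \rrangle_{\Sigma^{\prime}_{\nu}}
\end{equation*}
(the analogue of Lemma~\ref{lem:loc_prop_bracket_c}) together with the base cases $m=k,k+1$ and induction on $m$.
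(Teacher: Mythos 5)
Your proposal is correct in substance, and it in fact contains two routes. Your fallback route is precisely the proof the paper intends: the paper states this lemma with no proof at all, saying only that it is obtained ``analogously'' to Lemma~\ref{lem:rel_xm}, i.e., base cases $m=k,k+1$ (where $Q_{0}=0$, $Q_{\pm 1}=\pm 1$ make both sides coincide), then induction on $m$ in both directions using Definition~\ref{def:PolynomialsQ_n} and the $\llangle \cdot \rrangle_{\Sigma^{\prime}_{\nu}}$-analogue of Lemma~\ref{lem:loc_prop_bracket_c}; and that analogue does carry over verbatim, because the proof of Lemma~\ref{lem:loc_prop_bracket_c} only ever invokes the rules (c)--(i), which the two brackets share, together with the polynomial identities \eqref{eqn:Pnk} and \eqref{eqn:Pnk_1}. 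Your main route --- factoring $\llangle \cdot \rrangle_{\Sigma^{\prime}_{\nu}} = \sigma \circ \llangle \cdot \rrangle_{\nu}$ and pushing Lemma~\ref{lem:rel_xm} (with $c=\nu$) through the $R$-linear map $\sigma$ --- is genuinely different from the paper and arguably better: it yields both displayed identities in one stroke, and the same factorization would dispose of Lemma~\ref{lem:annulus_for_any_kn_bracket_2_beta} as well, so that nothing in Section~\ref{s:basis_A2TimesS1} has to be re-proved.

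The weak point is your justification of the factorization itself, which mislocates the difficulty. It is not a local-confluence problem to be reconciled by \eqref{eqn:Pn}, \eqref{eqn:Pnk}, \eqref{eqn:Pnk_1} ``as in Lemma~\ref{lem:annulus_type5}'': no skein or polynomial identity enters at all, and conversely, if you did adopt the rewriting-system viewpoint, resolving the single (f)-versus-(k) interaction would not establish confluence (you would owe all critical pairs plus termination). The correct argument is a short structural induction resting on the observation that the rules have disjoint domains: (c)--(i) fire only on words in $\Gamma \setminus \Sigma_{\nu}$, where they coincide with the rules of $\llangle \cdot \rrangle_{\nu}$, while (j) and (k) fire only on $\Sigma_{\nu}\setminus\Sigma^{\prime}_{\nu}$, exactly where $\llangle \cdot \rrangle_{\nu}$ halts. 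So define $\sigma$ to be the restriction of $\llangle \cdot \rrangle_{\Sigma^{\prime}_{\nu}}$ to $R\Sigma_{\nu}$ (well defined by your downward induction on the number of $x_{\nu}$-factors, as the paper itself notes) and prove $\llangle w \rrangle_{\Sigma^{\prime}_{\nu}} = \sigma(\llangle w \rrangle_{\nu})$ by induction on the recursion computing $\llangle w \rrangle_{\nu}$: for $w \in \Sigma_{\nu}$ both sides are literally $\llangle w \rrangle_{\Sigma^{\prime}_{\nu}}$, and for $w \notin \Sigma_{\nu}$ both brackets apply the identical rule from (c)--(i), so linearity of $\sigma$ and the inductive hypothesis finish the step. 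With that five-line repair your main route is complete; as written, its central claim is asserted by an appeal to the wrong mechanism, and only your fallback route would stand on its own.
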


\begin{lemma}
\label{lem:annulus_for_any_kn_bracket_2_beta}
Let $\Delta_{t}^{+},\Delta_{t}^{-},\Delta_{x}^{+},\Delta_{x}^{-}$ be finite subsets of $R \times \Gamma \times \Gamma \times \mathbb{Z}$, and define
\begin{equation*}
\Theta_{t}^{+}(k,n) = \sum_{(r,w_{1},w_{2},v) \in \Delta_{t}^{+}} r\llangle w_{1}P_{n+v,k}w_{2} \rrangle_{\Sigma^{\prime}_{\nu}}, \quad
\Theta_{t}^{-}(k,n) = \sum_{(r,w_{1},w_{2},v) \in \Delta_{t}^{-}} r\llangle w_{1}P_{-n+v}\lambda^{k}w_{2} \rrangle_{\Sigma^{\prime}_{\nu}},
\end{equation*}
\begin{equation*}
\Theta_{x}^{+}(k,n) = \sum_{(r,w_{1},w_{2},v) \in \Delta_{x}^{+}} r\llangle w_{1}\lambda^{k}x_{n+v}w_{2} \rrangle_{\Sigma^{\prime}_{\nu}}, \quad
\Theta_{x}^{-}(k,n) = \sum_{(r,w_{1},w_{2},v) \in \Delta_{x}^{-}} r\llangle w_{1}x_{-n+v}\lambda^{k}w_{2} \rrangle_{\Sigma^{\prime}_{\nu}},
\end{equation*}
and 
\begin{equation*}
\Theta_{t,x}(k,n) = \Theta_{t}^{+}(k,n) + \Theta_{t}^{-}(k,n) + \Theta_{x}^{+}(k,n)+ \Theta_{x}^{-}(k,n).
\end{equation*}
If either \textup{(1)} $\Theta_{t,x}(0,n) = 0$ for all $n \in \mathbb{Z}$ or \textup{(2)} $\Theta_{t,x}(k,n_{0}) = \Theta_{t,x}(k,n_{0}+1) = 0$ for all $k \geq 0$ and a fixed $n_{0} \in \mathbb{Z}$, then $\Theta_{t,x}(k,n) = 0$ for any $k \geq 0$ and $n \in \mathbb{Z}$.
\end{lemma}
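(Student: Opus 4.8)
The plan is to prove this statement by transporting the argument of Lemma~\ref{lem:annulus_for_any_kn} essentially verbatim, with $\llangle\cdot\rrangle_{c}$ replaced throughout by $\llangle\cdot\rrangle_{\Sigma^{\prime}_{\nu}}$. The whole proof of Lemma~\ref{lem:annulus_for_any_kn} rests on four one-step reduction identities for the partial sums: for case (1) one shows that each of $\Theta_{t}^{+},\Theta_{t}^{-},\Theta_{x}^{+},\Theta_{x}^{-}$ satisfies
\[
\Theta(k,n) = A\,\Theta(k-1,n+1) + A^{-1}\,\Theta(k-1,n-1),
\]
and for case (2) each satisfies the reverse identity
\[
\Theta(k,n) = A^{-1}\,\Theta(k+1,n-1) - A^{-2}\,\Theta(k,n-2)
\]
(together with its mirror for the downward direction). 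Once these four identities are available for $\llangle\cdot\rrangle_{\Sigma^{\prime}_{\nu}}$, the induction on $k$ for (1) and the two-sided induction on $n$ away from the pair $n_{0},n_{0}+1$ for (2) are purely formal and carry over word for word from the proof of Lemma~\ref{lem:annulus_for_any_kn}.

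First I would re-derive these reduction identities. For the two $\Theta_{t}^{\pm}$ terms the derivation is literally unchanged, since it uses only the recursions \eqref{eqn:Pn}, \eqref{eqn:Pnk}, and \eqref{eqn:Pnk_1}, which are identities among the polynomials $P_{n,k}\in R[\lambda]$ and hence are independent of the bracket applied afterwards. For the two $\Theta_{x}^{\pm}$ terms the derivation instead needs the local $x_{m}$-relations \eqref{eqn:xm_rel_1} and \eqref{eqn:xm_rel_2} with $\llangle\cdot\rrangle_{c}$ replaced by $\llangle\cdot\rrangle_{\Sigma^{\prime}_{\nu}}$; explicitly one needs
\[
\llangle w_{1}\lambda x_{j} w_{2}\rrangle_{\Sigma^{\prime}_{\nu}} = A^{-1}\llangle w_{1} x_{j-1} w_{2}\rrangle_{\Sigma^{\prime}_{\nu}} + A\llangle w_{1} x_{j+1} w_{2}\rrangle_{\Sigma^{\prime}_{\nu}}
\]
and its mirror image, which are exactly the rearrangements of \eqref{eqn:xm_rel_1} and \eqref{eqn:xm_rel_2} used in the annulus case.

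Consequently the only genuinely new point, and the single place where the $S_{\beta}$-move enters, is the $\Sigma^{\prime}_{\nu}$-analogue of Lemma~\ref{lem:loc_prop_bracket_c}: that \eqref{eqn:xm_rel_1} and \eqref{eqn:xm_rel_2} survive the passage from $\llangle\cdot\rrangle_{\nu}$ to $\llangle\cdot\rrangle_{\Sigma^{\prime}_{\nu}}$. Since items (a)--(i) defining $\llangle\cdot\rrangle_{\Sigma^{\prime}_{\nu}}$ coincide with those defining $\llangle\cdot\rrangle_{\nu}$, the relations already hold after applying (a)--(i); what must be checked is that the extra reductions (j) and (k), i.e. the $S_{\beta}$-reduction $R\Sigma_{\nu}\to R\Sigma^{\prime}_{\nu}$, do not destroy them. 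Because that reduction is $R$-linear and $\llangle\cdot\rrangle_{\Sigma^{\prime}_{\nu}}$ factors as this reduction applied to $\llangle\cdot\rrangle_{\nu}$, the relations descend: applying the $R$-linear $S_{\beta}$-reduction to both sides of the $\llangle\cdot\rrangle_{\nu}$-versions of \eqref{eqn:xm_rel_1} and \eqref{eqn:xm_rel_2} yields their $\llangle\cdot\rrangle_{\Sigma^{\prime}_{\nu}}$-versions, and this is the same mechanism that underlies the already-stated Lemma~\ref{lem:rel_xm_bracket_2_beta}. I expect this compatibility check to be the only delicate step; after it, the four reduction identities hold for $\llangle\cdot\rrangle_{\Sigma^{\prime}_{\nu}}$ and the remainder of the proof is an exact transcription of the inductions in Lemma~\ref{lem:annulus_for_any_kn}.
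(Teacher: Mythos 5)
Your proposal is correct and takes essentially the same approach as the paper: the paper states this lemma without proof, as an analogue of Lemma~\ref{lem:annulus_for_any_kn} whose proof is to be transported verbatim, which is precisely what you do --- the $\Theta_{t}^{\pm}$ reductions need only the polynomial identities \eqref{eqn:Pn}, \eqref{eqn:Pnk}, \eqref{eqn:Pnk_1} together with linearity, and the $\Theta_{x}^{\pm}$ reductions need the $\Sigma^{\prime}_{\nu}$-versions of \eqref{eqn:xm_rel_1} and \eqref{eqn:xm_rel_2}. Your factorization argument, that $\llangle\cdot\rrangle_{\Sigma^{\prime}_{\nu}}$ is the $R$-linear reduction $R\Sigma_{\nu}\to R\Sigma^{\prime}_{\nu}$ composed with $\llangle\cdot\rrangle_{\nu}$ (since items (a)--(i) coincide and (j), (k) only act on words where $\llangle\cdot\rrangle_{\nu}$ terminates), supplies the justification the paper leaves implicit and is consistent with how the paper itself derives Lemma~\ref{lem:lemma_for_h4} from Lemma~\ref{lem:main_lemma_for_h}.
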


We will show that for arrow diagrams $D$ and $D'$ in ${\bf D}^{2}_{\beta}$ related  by an $S_{\beta}$-move, $\phi_{\beta}(D-D') = 0$. As the first step we consider special cases of such arrow diagrams $D = uw$ and $D'= W'w$, where $W'$ is obtained from $u\in \{t_{m,n},\lambda^{n}x_{m}\}$ by an $S_{\beta}$-move and $w\in \Gamma$. For such diagrams
\begin{equation}
\label{eqn:gDD'}
\phi_{\beta}(D-D') = \phi_{\beta}((u - W')w) = \llangle \llangle (u - A W'_{+} - A^{-1} W'_{-})w \rrangle_{\Gamma} \rrangle_{\Sigma_{\nu}^{\prime}},  
\end{equation}
where $W'_{+},W'_{-}$ are arrow diagrams obtained from $W'$ by smoothing its crossing according to positive and negative markers.

\begin{figure}[ht]
\centering
\includegraphics[scale=0.9]{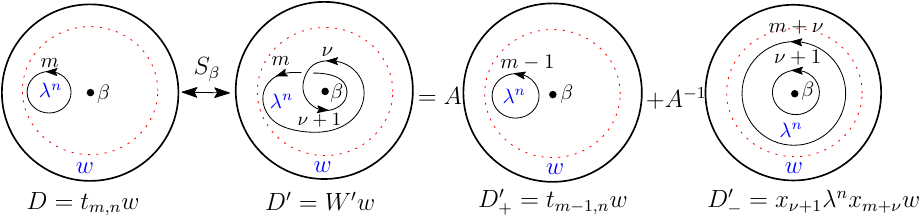}
\caption{Arrow diagrams $D$ and $D'$ in ${\bf D}^{2}_{\beta}$ related by an $S_{\beta}$-move}
\label{fig:V_beta_2_case1}
\end{figure}

For arrow diagrams $D$, $D'$ in Figure~\ref{fig:V_beta_2_case1}, we see that $u = t_{m,n}$, $W'_{+} = t_{m-1,n}$, and $W'_{-} = x_{\nu+1}\lambda^{n}x_{m+\nu}$. Hence, by \eqref{eqn:gDD'}, $\phi_{\beta}(D-D') = 0$ is equivalent to \eqref{eqn:Prop_h4_1} in the following lemma.

\begin{lemma}
\label{lem:V_beta_2_case1}
For $w\in \Gamma$, $m\in \mathbb{Z}$, and $n\geq 0$,
\begin{equation}
\label{eqn:Prop_h4_1}
\llangle P_{m,n}w - A P_{m-1,n}w - A^{-1}x_{\nu+1}\lambda^{n}x_{m+\nu}w \rrangle_{\Sigma^{\prime}_{\nu}} = 0.
\end{equation}
\end{lemma}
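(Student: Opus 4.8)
The plan is to reduce \eqref{eqn:Prop_h4_1}, for all $m\in\mathbb{Z}$ and $n\geq 0$, to the single slice $n=0$ by means of Lemma~\ref{lem:annulus_for_any_kn_bracket_2_beta}. First I would identify the left-hand side of \eqref{eqn:Prop_h4_1} with $\Theta_{t,x}$ of that lemma, setting its power-variable equal to our $n$ and its shift-variable equal to our $m$. Indeed, $\llangle P_{m,n}w\rrangle_{\Sigma^{\prime}_{\nu}}$ and $-A\llangle P_{m-1,n}w\rrangle_{\Sigma^{\prime}_{\nu}}$ are the two $\Theta_t^+$-contributions coming from the data $(1,\emptyset,w,0)$ and $(-A,\emptyset,w,-1)$ in $\Delta_t^+$, while $-A^{-1}\llangle x_{\nu+1}\lambda^n x_{m+\nu}w\rrangle_{\Sigma^{\prime}_{\nu}}$ is the $\Theta_x^+$-contribution from $(-A^{-1},x_{\nu+1},w,\nu)$ in $\Delta_x^+$ (and $\Delta_t^-=\Delta_x^-=\emptyset$). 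By part (1) of Lemma~\ref{lem:annulus_for_any_kn_bracket_2_beta}, it then suffices to verify \eqref{eqn:Prop_h4_1} when $n=0$, for every $m\in\mathbb{Z}$; I prefer this over part (2) because it pins the power index of the $P$-curves to $0$.

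For the base case I must show, since $P_{m,0}=P_m$ and $\lambda^0$ is empty, that
\begin{equation*}
\llangle P_m w - A P_{m-1}w - A^{-1}x_{\nu+1}x_{m+\nu}w\rrangle_{\Sigma^{\prime}_{\nu}} = 0.
\end{equation*}
The idea is to bring all three terms into the same normal form. Applying Lemma~\ref{lem:rel_xm_bracket_2_beta} with $k=\nu$ rewrites $\llangle x_{\nu+1}x_{m+\nu}w\rrangle_{\Sigma^{\prime}_{\nu}}$ as an $R[\lambda]$-combination of $\llangle x_{\nu+1}x_{\nu}\,Q_{m-1}\,w\rrangle_{\Sigma^{\prime}_{\nu}}$ and $\llangle x_{\nu+1}x_{\nu+1}\,Q_{m}\,w\rrangle_{\Sigma^{\prime}_{\nu}}$, confining the two adjacent $x$-curves to the indices $\nu$ and $\nu+1$, and in parallel I would expand $P_m$ and $P_{m-1}$ by \eqref{eqn:rel_Pn} into $Q$-combinations. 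The pair $x_{\nu+1}x_{\nu}$ is collapsed at once by the $S_\beta$-reductions built into items (j) and (k) of the definition of $\llangle\cdot\rrangle_{\Sigma^{\prime}_{\nu}}$ (equivalently, Lemma~\ref{lem:Sigma_2_beta}): the innermost $x_{\nu+1}$ becomes $-A^{3}x_\nu$ and the square $(x_\nu)^2$ passes to $P_{-1}$ and $P_0$ curves. After these substitutions both sides should become the same $R[\lambda]$-combination of elements of $\Sigma^{\prime}_{\nu}$, and the identity collapses to recursions among the $Q_n$ of Definition~\ref{def:PolynomialsQ_n} and the $P_n$ of \eqref{eqn:Pn} and \eqref{eqn:Pnk}.

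The hard part will be this base case, and within it the pair $x_{\nu+1}x_{\nu+1}$, since items (j) and (k) apply verbatim only to words of the exact shapes $x_{\nu+1}\lambda^n(x_\nu)^k$ and $x_\nu\lambda^n(x_\nu)^k$, whereas here the outer $x_{\nu+1}$ is flanked by the $\lambda$-curves of $Q_m$ and by the tail $w$. To route it into such a template I expect to first normalize $w$ to the basis form $\lambda^{n'}(x_{\nu+1})^{\varepsilon}(x_\nu)^{k'}$ with $\varepsilon\in\{0,1\}$, as in the proof of Lemma~\ref{lem:loc_prop_bracket_c}, so that item (f) or item (j) applies to the resulting tail, then re-invoke Lemma~\ref{lem:annulus_for_any_kn_bracket_2_beta} to dispose of the auxiliary parameters $n'$ and $k'$, and finally reduce to a finite check for small $m$. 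Throughout, the bookkeeping of the $\lambda$-positions produced by the $Q$-polynomials, handled via \eqref{eqn:Pn}, \eqref{eqn:Pnk}, and Lemma~\ref{lem:rel_Pn}, is what carries the weight of the argument.
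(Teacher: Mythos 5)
Your first reduction is correct: with $\Delta_{t}^{+}=\{(1,\emptyset,w,0),(-A,\emptyset,w,-1)\}$ and $\Delta_{x}^{+}=\{(-A^{-1},x_{\nu+1},w,\nu)\}$ the left side of \eqref{eqn:Prop_h4_1} is a $\Theta_{t,x}(n,m)$, and part (1) of Lemma~\ref{lem:annulus_for_any_kn_bracket_2_beta} validly reduces the claim to the slice $n=0$, for all $m\in\mathbb{Z}$ and all $w$. The first genuine gap is in how you settle that base case. Collapsing the pair $x_{\nu+1}x_{\nu}$ inside $\llangle x_{\nu+1}x_{\nu}Q_{m-1}w\rrangle_{\Sigma^{\prime}_{\nu}}$ by items (j), (k) is not a legal move: these items are clauses of a \emph{definition} and apply only to words that are exactly of the form $x_{\nu+1}\lambda^{n}(x_{\nu})^{k}$ or $x_{\nu}\lambda^{n}(x_{\nu})^{k}$, and here the pair is flanked by the $\lambda$'s of $Q_{m-1}$ and by $w$ (the objection you raise for $x_{\nu+1}x_{\nu+1}$ applies verbatim to $x_{\nu+1}x_{\nu}$ as well). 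Your parenthetical ``equivalently, Lemma~\ref{lem:Sigma_2_beta}'' is the tell: that lemma is a relation in the quotient $S\mathcal{D}({\bf D}^{2}_{\beta})$, whereas you must compute with the rewriting map $\llangle\cdot\rrangle_{\Sigma^{\prime}_{\nu}}$ on $R\Gamma$, and whether this map respects the $S_{\beta}$-relation in arbitrary position is precisely what Lemmas~\ref{lem:V_beta_2_case1}--\ref{lem:Main_Lemma_For_VB2} exist to prove. Concretely, ``$\llangle x_{\nu+1}u\rrangle_{\Sigma^{\prime}_{\nu}}=-A^{3}\llangle x_{\nu}u\rrangle_{\Sigma^{\prime}_{\nu}}$ for arbitrary $u$'' is the case $n=0$, $m=\nu$ of \eqref{eqn:Prop_h4_2}, proved in the paper \emph{after}, and with as much effort as, the present lemma; and item (k) in context follows from the $m=0$ case of the very identity you are proving. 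Invoking either is circular.

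Second, your clean-up cannot end in ``a finite check for small $m$.'' Lemma~\ref{lem:annulus_for_any_kn_bracket_2_beta} moves exactly two kinds of parameters: a $\lambda$-exponent (or second $P$-index) and a shift of an $x$- or first $P$-index; the tail exponent $k'$ is neither, so the lemma cannot dispose of it (the paper simply carries $(x_{\nu})^{k'}$ inertly through every verification). Worse, having already pinned $n=0$, the only $\lambda$-parameter left after normalizing $w$ is $n'$, and you can spend it only once: part (1) removes $n'$ but then demands base cases for \emph{all} $m\in\mathbb{Z}$, while part (2) pins $m$ to two consecutive values but demands base cases for \emph{all} $n'\geq 0$. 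Either way an infinite family of direct verifications remains, and because $n$ was erased first, its members no longer sit on the templates of (j), (k), (f), so checking them uniformly drives you back into the circular step above. This is exactly why the paper runs the reduction in the transposed order: it keeps $n$ free and uses part (2) to pin $m\in\{0,1\}$ (and the tail index $m'\in\{\nu,\nu+1\}$), because the $S_{\beta}$-clauses carry $\lambda^{n}$ inside their templates. Its base cases, such as $\llangle P_{0,n}\rrangle_{\Sigma^{\prime}_{\nu}}=A\llangle P_{-1,n}\rrangle_{\Sigma^{\prime}_{\nu}}+A^{-1}\llangle x_{\nu+1}\lambda^{n}x_{\nu}\rrangle_{\Sigma^{\prime}_{\nu}}$, then land verbatim on (j) and (k), with the harder cases handled via $\Omega_{2}$, $\Omega_{5}$ moves through Lemma~\ref{lem:lemma_for_h4}. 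Your stated preference for part (1) ``because it pins the power index of the $P$-curves to $0$'' is therefore exactly backwards: $n$ is the one parameter the definition of $\llangle\cdot\rrangle_{\Sigma^{\prime}_{\nu}}$ can see, and it should be the last parameter standing, not the first one killed.
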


\begin{proof}
To prove \eqref{eqn:Prop_h4_1}, it suffices to show that for any fixed $m$,
\begin{equation}
\label{eqn:V_beta_2_case1}
\llangle P_{m,n}w \rrangle_{\Sigma^{\prime}_{\nu}} = A\llangle P_{m-1,n}w \rrangle_{\Sigma^{\prime}_{\nu}} + A^{-1}\llangle x_{\nu+1}\lambda^{n}x_{m+\nu}w \rrangle_{\Sigma^{\prime}_{\nu}}.
\end{equation}
We may assume that $w = \lambda^{n'}$ or $w = \lambda^{n'}x_{m'}(x_{\nu})^{k'}$ by using similar arguments as at the beginning of our proof for Lemma~\ref{lem:loc_prop_bracket_c}. 

When $w = \lambda^{n'}$, \eqref{eqn:V_beta_2_case1} becomes
\begin{equation}
\label{eqn:V_beta_2_case1a}
\llangle P_{m,n}\lambda^{n'} \rrangle_{\Sigma^{\prime}_{\nu}} = A\llangle P_{m-1,n}\lambda^{n'} \rrangle_{\Sigma^{\prime}_{\nu}} + A^{-1}\llangle x_{\nu+1}\lambda^{n}x_{m+\nu}\lambda^{n'} \rrangle_{\Sigma^{\prime}_{\nu}}.
\end{equation}
By Lemma~\ref{lem:annulus_for_any_kn_bracket_2_beta}, it suffices to check \eqref{eqn:V_beta_2_case1a} for $n' = 0$ and $m = 0$ or $m = 1$. Indeed, when $n' = 0$ and $m = 0$, by part k) and j) in the definition of $\llangle \cdot \rrangle_{\Sigma^{\prime}_{\nu}}$,
\begin{equation*}
\llangle P_{0,n} \rrangle_{\Sigma^{\prime}_{\nu}} = A\llangle P_{-1,n} \rrangle_{\Sigma^{\prime}_{\nu}} - A^{2} \llangle x_{\nu}\lambda^{n}x_{\nu} \rrangle_{\Sigma^{\prime}_{\nu}} = A\llangle P_{-1,n} \rrangle_{\Sigma^{\prime}_{\nu}}+ A^{-1}\llangle x_{\nu+1}\lambda^{n}x_{\nu} \rrangle_{\Sigma^{\prime}_{\nu}}.
\end{equation*}
Hence, \eqref{eqn:V_beta_2_case1a} holds in this case. When $n' = 0$ and $m = 1$, by part f) in the definition of $\llangle \cdot \rrangle_{\Sigma^{\prime}_{\nu}}$, we see that
\begin{equation*}
\llangle x_{\nu+1}\lambda^{n}x_{\nu+1} \rrangle_{\Sigma^{\prime}_{\nu}} 
= -A^{-1} \llangle \lambda P_{-1,n} \rrangle_{\Sigma^{\prime}_{\nu}} + 2 \llangle P_{-2,n} \rrangle_{\Sigma^{\prime}_{\nu}} + A^{-2} \llangle x_{\nu+2}\lambda^{n}x_{\nu} \rrangle_{\Sigma^{\prime}_{\nu}}.
\end{equation*}
Moreover, by part h) in the definition of $\llangle \cdot \rrangle_{\Sigma^{\prime}_{\nu}}$,
\begin{equation*}
A^{-2}\llangle x_{\nu+2}\lambda^{n}x_{\nu} \rrangle_{\Sigma^{\prime}_{\nu}} = A^{-1}\llangle x_{\nu+1}\lambda^{n+1}x_{\nu} \rrangle_{\Sigma^{\prime}_{\nu}} - \llangle x_{\nu}\lambda^{n}x_{\nu} \rrangle_{\Sigma^{\prime}_{\nu}}.
\end{equation*}
Since 
\begin{equation*}
A^{-1}\llangle x_{\nu+1}\lambda^{n+1}x_{\nu} \rrangle_{\Sigma^{\prime}_{\nu}} =
-A^{2}\llangle x_{\nu}\lambda^{n+1}x_{\nu} \rrangle_{\Sigma^{\prime}_{\nu}} = -A\llangle P_{-1,n+1} \rrangle_{\Sigma^{\prime}_{\nu}} + \llangle P_{0,n+1} \rrangle_{\Sigma^{\prime}_{\nu}}
\end{equation*}
by part j) and k) in the definition of $\llangle \cdot \rrangle_{\Sigma^{\prime}_{\nu}}$,
and 
\begin{equation*}
\llangle x_{\nu}\lambda^{n}x_{\nu} \rrangle_{\Sigma^{\prime}_{\nu}} = A^{-1}\llangle P_{-1,n} \rrangle_{\Sigma^{\prime}_{\nu}} - A^{-2}\llangle P_{0,n} \rrangle_{\Sigma^{\prime}_{\nu}}
\end{equation*}
by part k) in the definition of $\llangle \cdot \rrangle_{\Sigma^{\prime}_{\nu}}$. Finally, using \eqref{eqn:Pnk} and \eqref{eqn:Pnk_1},
\begin{eqnarray*}
\llangle x_{\nu+1}\lambda^{n}x_{\nu+1} \rrangle_{\Sigma^{\prime}_{\nu}} 
&=& -A^{-1} \llangle \lambda P_{-1,n} \rrangle_{\Sigma^{\prime}_{\nu}} + 2 \llangle P_{-2,n} \rrangle_{\Sigma^{\prime}_{\nu}} - A\llangle P_{-1,n+1} \rrangle_{\Sigma^{\prime}_{\nu}} \\
&+& \llangle P_{0,n+1} \rrangle_{\Sigma^{\prime}_{\nu}} - (A^{-1}\llangle P_{-1,n} \rrangle_{\Sigma^{\prime}_{\nu}} - A^{-2}\llangle P_{0,n} \rrangle_{\Sigma^{\prime}_{\nu}}) \\
&=& -A^{-2} \llangle P_{0,n} \rrangle_{\Sigma^{\prime}_{\nu}} - \llangle P_{-2,n} \rrangle_{\Sigma^{\prime}_{\nu}} + 2 \llangle P_{-2,n} \rrangle_{\Sigma^{\prime}_{\nu}} - A^{2}\llangle P_{0,n} \rrangle_{\Sigma^{\prime}_{\nu}} - \llangle P_{-2,n} \rrangle_{\Sigma^{\prime}_{\nu}} \\
&+& A\llangle P_{1,n} \rrangle_{\Sigma^{\prime}_{\nu}} + A^{-1}\llangle P_{-1,n} \rrangle_{\Sigma^{\prime}_{\nu}} 
- A^{-1}\llangle P_{-1,n} \rrangle_{\Sigma^{\prime}_{\nu}} + A^{-2}\llangle P_{0,n} \rrangle_{\Sigma^{\prime}_{\nu}} \\
&=& A\llangle P_{1,n} \rrangle_{\Sigma^{\prime}_{\nu}} - A^{2}\llangle P_{0,n} \rrangle_{\Sigma^{\prime}_{\nu}},
\end{eqnarray*}
it follows that \eqref{eqn:V_beta_2_case1a} holds in this case.

When $w = \lambda^{n'}x_{m'}(x_{\nu})^{k'}$, then \eqref{eqn:V_beta_2_case1} becomes
\begin{equation}
\label{eqn:V_beta_2_case1b}
\llangle P_{m,n}\lambda^{n'}x_{m'}(x_{\nu})^{k'} \rrangle_{\Sigma^{\prime}_{\nu}} = A\llangle P_{m-1,n}\lambda^{n'}x_{m'}(x_{\nu})^{k'} \rrangle_{\Sigma^{\prime}_{\nu}} + A^{-1}\llangle x_{\nu+1}\lambda^{n}x_{m+\nu}\lambda^{n'}x_{m'}(x_{\nu})^{k'} \rrangle_{\Sigma^{\prime}_{\nu}}.
\end{equation}
By Lemma~\ref{lem:annulus_for_any_kn_bracket_2_beta}, it suffices to check \eqref{eqn:V_beta_2_case1b} for $n' = 0$ and $(m,m') = (0,\nu),(1,\nu),(0,\nu+1),(1,\nu+1)$.

For $n' = 0$ and $(m,m') = (0,\nu)$, \eqref{eqn:V_beta_2_case1b} becomes
\begin{equation}
\label{eqn:V_beta_2_case1b_1}
\llangle P_{0,n}x_{\nu}(x_{\nu})^{k'} \rrangle_{\Sigma^{\prime}_{\nu}} = A\llangle P_{-1,n}x_{\nu}(x_{\nu})^{k'} \rrangle_{\Sigma^{\prime}_{\nu}} + A^{-1}\llangle x_{\nu+1}\lambda^{n}x_{\nu}x_{\nu}(x_{\nu})^{k'} \rrangle_{\Sigma^{\prime}_{\nu}}.
\end{equation}
By part j) and k) in the definition of $\llangle \cdot \rrangle_{\Sigma^{\prime}_{\nu}}$,
\begin{equation*}
\llangle x_{\nu+1}\lambda^{n}x_{\nu}(x_{\nu})^{k'+1} \rrangle_{\Sigma^{\prime}_{\nu}}
= -A^{3}\llangle x_{\nu}\lambda^{n}x_{\nu}(x_{\nu})^{k'+1} \rrangle_{\Sigma^{\prime}_{\nu}} 
= -A^{2}\llangle P_{-1,n}(x_{\nu})^{k'+1} \rrangle_{\Sigma^{\prime}_{\nu}} + A\llangle P_{0,n}(x_{\nu})^{k'+1} \rrangle_{\Sigma^{\prime}_{\nu}},
\end{equation*}
which gives \eqref{eqn:V_beta_2_case1b_1}.

For $n' = 0$ and $(m,m') = (1,\nu)$, \eqref{eqn:V_beta_2_case1b} becomes
\begin{equation}
\label{eqn:V_beta_2_case1b_2}
\llangle P_{1,n}x_{\nu}(x_{\nu})^{k'} \rrangle_{\Sigma^{\prime}_{\nu}} = A\llangle P_{0,n}x_{\nu}(x_{\nu})^{k'} \rrangle_{\Sigma^{\prime}_{\nu}} + A^{-1}\llangle x_{\nu+1}\lambda^{n}x_{\nu+1}x_{\nu}(x_{\nu})^{k'} \rrangle_{\Sigma^{\prime}_{\nu}}.
\end{equation}
By part f) in the definition of $\llangle \cdot \rrangle_{\Sigma^{\prime}_{\nu}}$,
\begin{eqnarray*}
\llangle x_{\nu+1}\lambda^{n}x_{\nu+1}(x_{\nu})^{k'+1} \rrangle_{\Sigma^{\prime}_{\nu}}
&=& -A^{-1} \llangle \lambda P_{-1,n}(x_{\nu})^{k'+1} \rrangle_{\Sigma^{\prime}_{\nu}} + 2 \llangle P_{-2,n}(x_{\nu})^{k'+1} \rrangle_{\Sigma^{\prime}_{\nu}} \\
&+& A^{-2} \llangle x_{\nu+2}\lambda^{n}(x_{\nu})^{k'+2} \rrangle_{\Sigma^{\prime}_{\nu}}.
\end{eqnarray*}
Moreover, by part h) in the definition of $\llangle \cdot \rrangle_{\Sigma^{\prime}_{\nu}}$,
\begin{equation*}
A^{-2} \llangle x_{\nu+2}\lambda^{n}(x_{\nu})^{k'+2} \rrangle_{\Sigma^{\prime}_{\nu}}
= A^{-1} \llangle x_{\nu+1}\lambda^{n+1}(x_{\nu})^{k'+2} \rrangle_{\Sigma^{\prime}_{\nu}} - \llangle x_{\nu}\lambda^{n}(x_{\nu})^{k'+2} \rrangle_{\Sigma^{\prime}_{\nu}}.
\end{equation*}
Since 
\begin{equation*}
A^{-1} \llangle x_{\nu+1}\lambda^{n+1}(x_{\nu})^{k'+2} \rrangle_{\Sigma^{\prime}_{\nu}} = \llangle P_{0,n+1}(x_{\nu})^{k'+1} \rrangle_{\Sigma^{\prime}_{\nu}} - A\llangle P_{-1,n+1}(x_{\nu})^{k'+1} \rrangle_{\Sigma^{\prime}_{\nu}} 
\end{equation*}
by \eqref{eqn:V_beta_2_case1b_1} and
\begin{equation*}
- \llangle x_{\nu}\lambda^{n}(x_{\nu})^{k'+2} \rrangle_{\Sigma^{\prime}_{\nu}} = -A^{-1} \llangle P_{-1,n}(x_{\nu})^{k'+1} \rrangle_{\Sigma^{\prime}_{\nu}} + A^{-2} \llangle P_{0,n}(x_{\nu})^{k'+1} \rrangle_{\Sigma^{\prime}_{\nu}} 
\end{equation*}
by part k) in the definition of $\llangle \cdot \rrangle_{\Sigma^{\prime}_{\nu}}$, it follows that
\begin{eqnarray*}
& &\llangle x_{\nu+1}\lambda^{n}x_{\nu+1}(x_{\nu})^{k'+1} \rrangle_{\Sigma^{\prime}_{\nu}} \\
&=& -A^{-1} \llangle \lambda P_{-1,n}(x_{\nu})^{k'+1} \rrangle_{\Sigma^{\prime}_{\nu}} + 2 \llangle P_{-2,n}(x_{\nu})^{k'+1} \rrangle_{\Sigma^{\prime}_{\nu}} + \llangle P_{0,n+1}(x_{\nu})^{k'+1} \rrangle_{\Sigma^{\prime}_{\nu}} \\
&-& A\llangle P_{-1,n+1}(x_{\nu})^{k'+1} \rrangle_{\Sigma^{\prime}_{\nu}} -A^{-1} \llangle P_{-1,n}(x_{\nu})^{k'+1} \rrangle_{\Sigma^{\prime}_{\nu}} + A^{-2} \llangle P_{0,n}(x_{\nu})^{k'+1} \rrangle_{\Sigma^{\prime}_{\nu}}.
\end{eqnarray*}
Applying \eqref{eqn:Pnk} and \eqref{eqn:Pnk_1},
\begin{equation*}
-A^{-1} \llangle \lambda P_{-1,n}(x_{\nu})^{k'+1} \rrangle_{\Sigma^{\prime}_{\nu}} = -A^{-2} \llangle P_{0,n}(x_{\nu})^{k'+1} \rrangle_{\Sigma^{\prime}_{\nu}} - \llangle P_{-2,n}(x_{\nu})^{k'+1} \rrangle_{\Sigma^{\prime}_{\nu}},
\end{equation*}
\begin{equation*}
\llangle P_{0,n+1}(x_{\nu})^{k'+1} \rrangle_{\Sigma^{\prime}_{\nu}} = A \llangle P_{1,n}(x_{\nu})^{k'+1} \rrangle_{\Sigma^{\prime}_{\nu}} + A^{-1} \llangle P_{-1,n}(x_{\nu})^{k'+1} \rrangle_{\Sigma^{\prime}_{\nu}},
\end{equation*}
and
\begin{equation*}
-A\llangle P_{-1,n+1}(x_{\nu})^{k'+1} \rrangle_{\Sigma^{\prime}_{\nu}} = - A^{2} \llangle P_{0,n}(x_{\nu})^{k'+1} \rrangle_{\Sigma^{\prime}_{\nu}} - \llangle P_{-2,n}(x_{\nu})^{k'+1} \rrangle_{\Sigma^{\prime}_{\nu}}.
\end{equation*}
Therefore, after cancelling alike terms,
\begin{equation*}
\llangle x_{\nu+1}\lambda^{n}x_{\nu+1}(x_{\nu})^{k'+1} \rrangle_{\Sigma^{\prime}_{\nu}} = -A^{2} \llangle P_{0,n}(x_{\nu})^{k'+1} \rrangle_{\Sigma^{\prime}_{\nu}} + A \llangle P_{1,n}(x_{\nu})^{k'+1} \rrangle_{\Sigma^{\prime}_{\nu}}.
\end{equation*}
This finishes our proof for \eqref{eqn:V_beta_2_case1b_2}.

\begin{figure}[ht]
\centering
\includegraphics[scale=0.8]{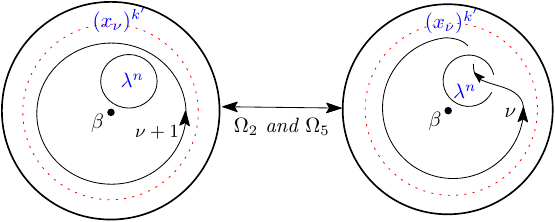}
\caption{$\Omega_{2}$ and $\Omega_{5}$ applied on $D = t_{0,n}x_{\nu+1}(x_{\nu})^{k'}$}
\label{fig:V_beta_2_case1b_31}
\end{figure}

For $n' = 0$ and $(m,m') = (0,\nu+1)$, \eqref{eqn:V_beta_2_case1b} becomes
\begin{equation}
\label{eqn:V_beta_2_case1b_3}
\llangle P_{0,n}x_{\nu+1}(x_{\nu})^{k'} \rrangle_{\Sigma^{\prime}_{\nu}} = A\llangle P_{-1,n}x_{\nu+1}(x_{\nu})^{k'} \rrangle_{\Sigma^{\prime}_{\nu}} + A^{-1}\llangle x_{\nu+1}\lambda^{n}x_{\nu}x_{\nu+1}(x_{\nu})^{k'} \rrangle_{\Sigma^{\prime}_{\nu}}.
\end{equation}
We notice that applying $\Omega_{2}$ and  $\Omega_{5}$ on $D = t_{0,n}x_{\nu+1}(x_{\nu})^{k'}$ yields the diagram $D''$ as shown on the right of Figure~\ref{fig:V_beta_2_case1b_31}. Since by Lemma~\ref{lem:lemma_for_h4},
\begin{eqnarray*}
\llangle P_{0,n}x_{\nu+1}(x_{\nu})^{k'} \rrangle_{\Sigma^{\prime}_{\nu}} &=& \phi_{\beta}(D)=\phi_{\beta}(D'') \\
&=& A^{2} \llangle x_{\nu} \lambda^{n} P_{-1} (x_{\nu})^{k'} \rrangle_{\Sigma^{\prime}_{\nu}} + 2 \llangle x_{\nu-1} \lambda^{n} (x_{\nu})^{k'} \rrangle_{\Sigma^{\prime}_{\nu}} + A^{-2} \llangle P_{1,n} (x_{\nu})^{k'+1} \rrangle_{\Sigma^{\prime}_{\nu}}.
\end{eqnarray*}
Since $P_{-1} = -A^{-3}\lambda$, using part i) in the definition of $\llangle \cdot \rrangle_{\Sigma^{\prime}_{\nu}}$, it follows that
\begin{eqnarray*}
\llangle P_{0,n}x_{\nu+1}(x_{\nu})^{k'} \rrangle_{\Sigma^{\prime}_{\nu}} 
&=& -A^{-1} \llangle x_{\nu} \lambda^{n+1} (x_{\nu})^{k'} \rrangle_{\Sigma^{\prime}_{\nu}} + 2A^{-1} \llangle x_{\nu} \lambda^{n+1} (x_{\nu})^{k'} \rrangle_{\Sigma^{\prime}_{\nu}} \\
&-& 2A^{-2} \llangle x_{\nu+1} \lambda^{n} (x_{\nu})^{k'} \rrangle_{\Sigma^{\prime}_{\nu}} + A^{-2} \llangle P_{1,n} (x_{\nu})^{k'+1} \rrangle_{\Sigma^{\prime}_{\nu}}.
\end{eqnarray*}
Therefore, by part j) in the definition $\llangle \cdot \rrangle_{\Sigma^{\prime}_{\nu}}$,
\begin{equation}
\label{eqn:V_beta_2_case1b_30}
\llangle P_{0,n}x_{\nu+1}(x_{\nu})^{k'} \rrangle_{\Sigma^{\prime}_{\nu}} = A^{-1} \llangle x_{\nu} \lambda^{n+1} (x_{\nu})^{k'} \rrangle_{\Sigma^{\prime}_{\nu}} + 2A \llangle x_{\nu} \lambda^{n} (x_{\nu})^{k'} \rrangle_{\Sigma^{\prime}_{\nu}} + A^{-2} \llangle P_{1,n} (x_{\nu})^{k'+1} \rrangle_{\Sigma^{\prime}_{\nu}}.
\end{equation}

\begin{figure}[ht]
\centering
\includegraphics[scale=0.8]{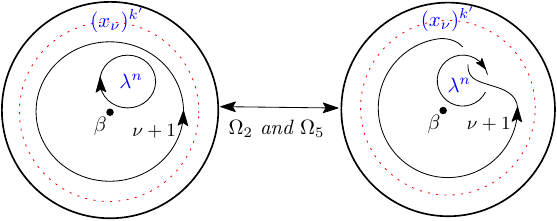}
\caption{$\Omega_{2}$ and $\Omega_{5}$ applied on $D = t_{-1,n}x_{\nu+1}(x_{\nu})^{k'}$}
\label{fig:V_beta_2_case1b_32}
\end{figure}

Applying $\Omega_{2}$ and $\Omega_{5}$ on arrow diagram $D = t_{-1,n}x_{\nu+1}(x_{\nu})^{k'}$  yields the arrow diagram $D''$ on the right of Figure~\ref{fig:V_beta_2_case1b_32}. Since by Lemma~\ref{lem:lemma_for_h4},
\begin{eqnarray*}
\llangle P_{-1,n}x_{\nu+1}(x_{\nu})^{k'} \rrangle_{\Sigma^{\prime}_{\nu}} &=& \phi_{\beta}(D) = \phi_{\beta}(D'') \\
&=& A^{2} \llangle x_{\nu+1} \lambda^{n} P_{-1} (x_{\nu})^{k'} \rrangle_{\Sigma^{\prime}_{\nu}} + 2 \llangle x_{\nu} \lambda^{n} (x_{\nu})^{k'} \rrangle_{\Sigma^{\prime}_{\nu}} + A^{-2} \llangle P_{0,n} (x_{\nu})^{k'+1} \rrangle_{\Sigma^{\prime}_{\nu}}.
\end{eqnarray*}
Since $P_{-1} = -A^{-3}\lambda$, using part j) in the definition of $\llangle \cdot \rrangle_{\Sigma^{\prime}_{\nu}}$, after multiplying by $A$ it follows that
\begin{equation}
\label{eqn:V_beta_2_case1b_31}
A\llangle P_{-1,n}x_{\nu+1}(x_{\nu})^{k'} \rrangle_{\Sigma^{\prime}_{\nu}} = A^{3} \llangle x_{\nu} \lambda^{n+1} (x_{\nu})^{k'} \rrangle_{\Sigma^{\prime}_{\nu}} + 2A \llangle x_{\nu} \lambda^{n} (x_{\nu})^{k'} \rrangle_{\Sigma^{\prime}_{\nu}} + A^{-1} \llangle P_{0,n} (x_{\nu})^{k'+1} \rrangle_{\Sigma^{\prime}_{\nu}}.
\end{equation}
Furthermore, by part f) in the definition of $\llangle \cdot \rrangle_{\Sigma^{\prime}_{\nu}}$ and then after using part j) in the definition of $\llangle \cdot \rrangle_{\Sigma^{\prime}_{\nu}}$,
\begin{eqnarray}
\label{eqn:V_beta_2_case1b_32}
& &A^{-1}\llangle x_{\nu+1}\lambda^{n}x_{\nu}x_{\nu+1}(x_{\nu})^{k'} \rrangle_{\Sigma^{\prime}_{\nu}} \notag\\
&=& -A^{-2}\llangle x_{\nu+1}\lambda^{n+1}P_{0}(x_{\nu})^{k'} \rrangle_{\Sigma^{\prime}_{\nu}} + 2A^{-1} \llangle x_{\nu+1}\lambda^{n}P_{-1}(x_{\nu})^{k'} \rrangle_{\Sigma^{\prime}_{\nu}} 
+ A^{-3} \llangle x_{\nu+1}\lambda^{n}x_{\nu+1}(x_{\nu})^{k'+1} \rrangle_{\Sigma^{\prime}_{\nu}} \notag\\ 
&=& -A^{3}\llangle x_{\nu}\lambda^{n+1}(x_{\nu})^{k'} \rrangle_{\Sigma^{\prime}_{\nu}} + A^{-1} \llangle x_{\nu}\lambda^{n+1}(x_{\nu})^{k'} \rrangle_{\Sigma^{\prime}_{\nu}} 
+ A^{-3} \llangle x_{\nu+1}\lambda^{n}x_{\nu+1}(x_{\nu})^{k'+1} \rrangle_{\Sigma^{\prime}_{\nu}}.
\end{eqnarray}
Therefore, \eqref{eqn:V_beta_2_case1b_3} follows from \eqref{eqn:V_beta_2_case1b_30}--\eqref{eqn:V_beta_2_case1b_32} and \eqref{eqn:V_beta_2_case1b_2}.

For $n' = 0$ and $(m,m') = (1,\nu+1)$, \eqref{eqn:V_beta_2_case1b} becomes
\begin{equation}
\label{eqn:V_beta_2_case1b_4}
\llangle P_{1,n}x_{\nu+1}(x_{\nu})^{k'} \rrangle_{\Sigma^{\prime}_{\nu}} = A\llangle P_{0,n}x_{\nu+1}(x_{\nu})^{k'} \rrangle_{\Sigma^{\prime}_{\nu}} + A^{-1}\llangle x_{\nu+1}\lambda^{n}x_{\nu+1}x_{\nu+1}(x_{\nu})^{k'} \rrangle_{\Sigma^{\prime}_{\nu}}.
\end{equation}

\begin{figure}[ht]
\centering
\includegraphics[scale=0.8]{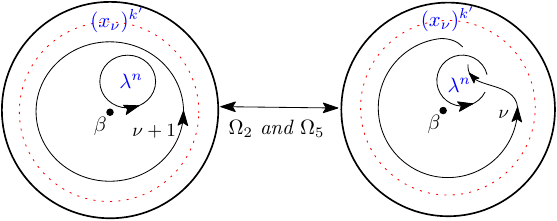}
\caption{$\Omega_{2}$ and $\Omega_{5}$ applied on $D = t_{1,n}x_{\nu+1}(x_{\nu})^{k'}$}
\label{fig:V_beta_2_case1b_41}
\end{figure}

Let $D$ and $D''$ be arrow diagrams in Figure~\ref{fig:V_beta_2_case1b_41}. Since $D$ and $D''$ differ by $\Omega_{2}$ and $\Omega_{5}$-moves, by Lemma~\ref{lem:lemma_for_h4},
\begin{eqnarray*}
\llangle P_{1,n}x_{\nu+1}(x_{\nu})^{k'} \rrangle_{\Sigma^{\prime}_{\nu}} &=& \phi_{\beta}(D) = \phi_{\beta}(D'') \\
&=& A^{2} \llangle x_{\nu-1} \lambda^{n} P_{-1} (x_{\nu})^{k'} \rrangle_{\Sigma^{\prime}_{\nu}} + 2 \llangle x_{\nu-2} \lambda^{n} (x_{\nu})^{k'} \rrangle_{\Sigma^{\prime}_{\nu}} + A^{-2} \llangle P_{2,n} (x_{\nu})^{k'+1} \rrangle_{\Sigma^{\prime}_{\nu}}.
\end{eqnarray*}
Since $P_{-1} = -A^{-3}\lambda$, using part i) in the definition of $\llangle \cdot \rrangle_{\Sigma^{\prime}_{\nu}}$, it follows that
\begin{eqnarray*}
\llangle P_{1,n}x_{\nu+1}(x_{\nu})^{k'} \rrangle_{\Sigma^{\prime}_{\nu}} 
&=& -A^{-1} \llangle x_{\nu-1} \lambda^{n+1} (x_{\nu})^{k'} \rrangle_{\Sigma^{\prime}_{\nu}} + 2A^{-1} \llangle x_{\nu-1} \lambda^{n+1} (x_{\nu})^{k'} \rrangle_{\Sigma^{\prime}_{\nu}} \\
&-& 2A^{-2} \llangle x_{\nu} \lambda^{n} (x_{\nu})^{k'} \rrangle_{\Sigma^{\prime}_{\nu}} + A^{-2} \llangle P_{2,n} (x_{\nu})^{k'+1} \rrangle_{\Sigma^{\prime}_{\nu}} \\ 
&=& A^{-2} \llangle x_{\nu} \lambda^{n+2} (x_{\nu})^{k'} \rrangle_{\Sigma^{\prime}_{\nu}} - A^{-3} \llangle x_{\nu+1} \lambda^{n+1} (x_{\nu})^{k'} \rrangle_{\Sigma^{\prime}_{\nu}}  \\
&-& 2A^{-2} \llangle x_{\nu} \lambda^{n} (x_{\nu})^{k'} \rrangle_{\Sigma^{\prime}_{\nu}} + A^{-2} \llangle P_{2,n} (x_{\nu})^{k'+1} \rrangle_{\Sigma^{\prime}_{\nu}}. 
\end{eqnarray*}
Therefore, by part j) in the definition $\llangle \cdot \rrangle_{\Sigma^{\prime}_{\nu}}$,
\begin{eqnarray}
\label{eqn:V_beta_2_case1b_40}
\llangle P_{1,n}x_{\nu+1}(x_{\nu})^{k'} \rrangle_{\Sigma^{\prime}_{\nu}} &=& A^{-2} \llangle x_{\nu} \lambda^{n+2} (x_{\nu})^{k'} \rrangle_{\Sigma^{\prime}_{\nu}} + \llangle x_{\nu} \lambda^{n+1} (x_{\nu})^{k'} \rrangle_{\Sigma^{\prime}_{\nu}} \notag\\
&-& 2A^{-2} \llangle x_{\nu} \lambda^{n} (x_{\nu})^{k'} \rrangle_{\Sigma^{\prime}_{\nu}} + A^{-2} \llangle P_{2,n} (x_{\nu})^{k'+1} \rrangle_{\Sigma^{\prime}_{\nu}}.
\end{eqnarray}

Furthermore, by part f) in the definition of $\llangle \cdot \rrangle_{\Sigma^{\prime}_{\nu}}$, the second term of the right hand side of \eqref{eqn:V_beta_2_case1b_4} becomes
\begin{eqnarray}
\label{eqn:V_beta_2_case1b_41}
A^{-1}\llangle x_{\nu+1}\lambda^{n}x_{\nu+1}x_{\nu+1}(x_{\nu})^{k'} \rrangle_{\Sigma^{\prime}_{\nu}}
&=& -A^{-2}\llangle x_{\nu+1}\lambda^{n+1}P_{-1}(x_{\nu})^{k'} \rrangle_{\Sigma^{\prime}_{\nu}} + 2A^{-1} \llangle x_{\nu+1}\lambda^{n}P_{-2}(x_{\nu})^{k'} \rrangle_{\Sigma^{\prime}_{\nu}} \notag\\
&+& A^{-3} \llangle x_{\nu+1}\lambda^{n}x_{\nu+2}(x_{\nu})^{k'+1} \rrangle_{\Sigma^{\prime}_{\nu}}. 
\end{eqnarray}
Since $P_{-1} = -A^{-3}\lambda$ and $P_{-2} = - Q_{-1} + A^{-4}Q_{-3} = 1 + A^{-4} - A^{-4}\lambda^{2}$, using part j) in the definition $\llangle \cdot \rrangle_{\Sigma^{\prime}_{\nu}}$,
\begin{equation*}
-A^{-2}\llangle x_{\nu+1}\lambda^{n+1}P_{-1}(x_{\nu})^{k'} \rrangle_{\Sigma^{\prime}_{\nu}} = -A^{-2}\llangle x_{\nu}\lambda^{n+2}(x_{\nu})^{k'} \rrangle_{\Sigma^{\prime}_{\nu}}  
\end{equation*}
and
\begin{eqnarray*}
2A^{-1} \llangle x_{\nu+1}\lambda^{n}P_{-2}(x_{\nu})^{k'} \rrangle_{\Sigma^{\prime}_{\nu}} &=& - 2A^{2} \llangle x_{\nu}\lambda^{n}(x_{\nu})^{k'} \rrangle_{\Sigma^{\prime}_{\nu}} - 2A^{-2} \llangle x_{\nu}\lambda^{n}(x_{\nu})^{k'} \rrangle_{\Sigma^{\prime}_{\nu}} \notag\\
&+& 2A^{-2} \llangle x_{\nu}\lambda^{n+2}(x_{\nu})^{k'} \rrangle_{\Sigma^{\prime}_{\nu}}.
\end{eqnarray*}
For the third term on the right hand side of \eqref{eqn:V_beta_2_case1b_41}, using part d) and j) in the definition of $\llangle \cdot \rrangle_{\Sigma^{\prime}_{\nu}}$,
\begin{eqnarray*}
A^{-3} \llangle x_{\nu+1}\lambda^{n}x_{\nu+2}(x_{\nu})^{k'+1} \rrangle_{\Sigma^{\prime}_{\nu}} 
&=& A^{-4} \llangle x_{\nu+1}\lambda^{n+1}x_{\nu+1}(x_{\nu})^{k'+1} \rrangle_{\Sigma^{\prime}_{\nu}} - A^{-5} \llangle x_{\nu+1}\lambda^{n}x_{\nu}(x_{\nu})^{k'+1} \rrangle_{\Sigma^{\prime}_{\nu}} \notag\\ 
&=& A^{-4} \llangle x_{\nu+1}\lambda^{n+1}x_{\nu+1}(x_{\nu})^{k'+1} \rrangle_{\Sigma^{\prime}_{\nu}} + A^{-2} \llangle x_{\nu}\lambda^{n}x_{\nu}(x_{\nu})^{k'+1} \rrangle_{\Sigma^{\prime}_{\nu}}.
\end{eqnarray*}
Therefore, by the above equations and \eqref{eqn:V_beta_2_case1b_2}, 
\begin{eqnarray}
\label{eqn:V_beta_2_case1b_45}
& &A^{-1}\llangle x_{\nu+1}\lambda^{n}x_{\nu+1}x_{\nu+1}(x_{\nu})^{k'} \rrangle_{\Sigma^{\prime}_{\nu}} \notag\\
&=& - 2A^{2} \llangle x_{\nu}\lambda^{n}(x_{\nu})^{k'} \rrangle_{\Sigma^{\prime}_{\nu}}
- 2A^{-2} \llangle x_{\nu}\lambda^{n}(x_{\nu})^{k'} \rrangle_{\Sigma^{\prime}_{\nu}}
+ A^{-2} \llangle x_{\nu}\lambda^{n+2}(x_{\nu})^{k'} \rrangle_{\Sigma^{\prime}_{\nu}} \notag\\
&+& A^{-3} \llangle P_{1,n+1}(x_{\nu})^{k'+1} \rrangle_{\Sigma^{\prime}_{\nu}} - A^{-2} \llangle P_{0,n+1}(x_{\nu})^{k'+1} \rrangle_{\Sigma^{\prime}_{\nu}} + A^{-2} \llangle x_{\nu}\lambda^{n}(x_{\nu})^{k'+2} \rrangle_{\Sigma^{\prime}_{\nu}}.
\end{eqnarray}
Hence, by \eqref{eqn:V_beta_2_case1b_30}, \eqref{eqn:V_beta_2_case1b_45}, and part k) in the definition $\llangle \cdot \rrangle_{\Sigma^{\prime}_{\nu}}$, the right hand side of \eqref{eqn:V_beta_2_case1b_4} becomes
\begin{eqnarray*}
& &A\llangle P_{0,n}x_{\nu+1}(x_{\nu})^{k'} \rrangle_{\Sigma^{\prime}_{\nu}} + A^{-1}\llangle x_{\nu+1}\lambda^{n}x_{\nu+1}x_{\nu+1}(x_{\nu})^{k'} \rrangle_{\Sigma^{\prime}_{\nu}} \\
&=& \llangle x_{\nu} \lambda^{n+1} (x_{\nu})^{k'} \rrangle_{\Sigma^{\prime}_{\nu}} + 2A^{2} \llangle x_{\nu} \lambda^{n} (x_{\nu})^{k'} \rrangle_{\Sigma^{\prime}_{\nu}} + A^{-1} \llangle P_{1,n} (x_{\nu})^{k'+1} \rrangle_{\Sigma^{\prime}_{\nu}} \\
&-& 2A^{2} \llangle x_{\nu}\lambda^{n}(x_{\nu})^{k'} \rrangle_{\Sigma^{\prime}_{\nu}}
- 2A^{-2} \llangle x_{\nu}\lambda^{n}(x_{\nu})^{k'} \rrangle_{\Sigma^{\prime}_{\nu}}
+ A^{-2} \llangle x_{\nu}\lambda^{n+2}(x_{\nu})^{k'} \rrangle_{\Sigma^{\prime}_{\nu}} \notag\\
&+& A^{-3} \llangle P_{1,n+1}(x_{\nu})^{k'+1} \rrangle_{\Sigma^{\prime}_{\nu}} - A^{-2} \llangle P_{0,n+1}(x_{\nu})^{k'+1} \rrangle_{\Sigma^{\prime}_{\nu}} + A^{-2} \llangle x_{\nu}\lambda^{n}(x_{\nu})^{k'+2} \rrangle_{\Sigma^{\prime}_{\nu}} \\
&=& A^{-2} \llangle x_{\nu}\lambda^{n+2}(x_{\nu})^{k'} \rrangle_{\Sigma^{\prime}_{\nu}} + \llangle x_{\nu} \lambda^{n+1} (x_{\nu})^{k'} \rrangle_{\Sigma^{\prime}_{\nu}} - 2A^{-2} \llangle x_{\nu}\lambda^{n}(x_{\nu})^{k'} \rrangle_{\Sigma^{\prime}_{\nu}} \\
&+& A^{-1} \llangle P_{1,n} (x_{\nu})^{k'+1} \rrangle_{\Sigma^{\prime}_{\nu}} + A^{-3} \llangle P_{1,n+1}(x_{\nu})^{k'+1} \rrangle_{\Sigma^{\prime}_{\nu}} - A^{-2} \llangle P_{0,n+1}(x_{\nu})^{k'+1} \rrangle_{\Sigma^{\prime}_{\nu}} \\
&+& A^{-3} \llangle P_{-1,n}(x_{\nu})^{k'+1} \rrangle_{\Sigma^{\prime}_{\nu}} - A^{-4} \llangle P_{0,n}(x_{\nu})^{k'+1} \rrangle_{\Sigma^{\prime}_{\nu}}.
\end{eqnarray*}
Since by \eqref{eqn:Pnk}
\begin{equation*}
A^{-3} \llangle P_{1,n+1}(x_{\nu})^{k'+1} \rrangle_{\Sigma^{\prime}_{\nu}} = A^{-2} \llangle P_{2,n}(x_{\nu})^{k'+1} \rrangle_{\Sigma^{\prime}_{\nu}} + A^{-4} \llangle P_{0,n}(x_{\nu})^{k'+1} \rrangle_{\Sigma^{\prime}_{\nu}}
\end{equation*}
and
\begin{equation*}
- A^{-2} \llangle P_{0,n+1}(x_{\nu})^{k'+1} \rrangle_{\Sigma^{\prime}_{\nu}} = - A^{-1} \llangle P_{1,n}(x_{\nu})^{k'+1} \rrangle_{\Sigma^{\prime}_{\nu}} - A^{-3} \llangle P_{-1,n}(x_{\nu})^{k'+1} \rrangle_{\Sigma^{\prime}_{\nu}},
\end{equation*}
it follows that the right hand side of \eqref{eqn:V_beta_2_case1b_4} is same as the right hand side of \eqref{eqn:V_beta_2_case1b_40}. This completes the proof of the case $n' = 0$ and $(m,m') = (1,\nu+1)$.
\end{proof}

The following lemma gives us an important relation that will be used in the proof of Lemma~\ref{lem:V_beta_2_case2} to make it significantly shorter.

\begin{lemma}
\label{lem:V_beta_2_case2b_lem}
For any $n,k \geq 0$,
\begin{equation}
\label{eqn:V_beta_2_case2b_lem}
\llangle x_{\nu+1}\lambda^{n}x_{\nu+1}(x_{\nu})^{k} \rrangle_{\Sigma^{\prime}_{\nu}} = -A^{3} \llangle x_{\nu}\lambda^{n}x_{\nu+1}(x_{\nu})^{k} \rrangle_{\Sigma^{\prime}_{\nu}}.
\end{equation}
\end{lemma}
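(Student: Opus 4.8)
The plan is to reduce both sides of \eqref{eqn:V_beta_2_case2b_lem} to one and the same $R$-linear combination of the two reduced expressions $\llangle P_{1,n}(x_{\nu})^{k} \rrangle_{\Sigma^{\prime}_{\nu}}$ and $\llangle P_{0,n}(x_{\nu})^{k} \rrangle_{\Sigma^{\prime}_{\nu}}$. Concretely, I will show that each side equals $A\llangle P_{1,n}(x_{\nu})^{k} \rrangle_{\Sigma^{\prime}_{\nu}} - A^{2}\llangle P_{0,n}(x_{\nu})^{k} \rrangle_{\Sigma^{\prime}_{\nu}}$, which makes the asserted proportionality transparent.

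For the left-hand side I would invoke Lemma~\ref{lem:V_beta_2_case1} directly. Taking $m = 1$ and $w = (x_{\nu})^{k} \in \Gamma$ in \eqref{eqn:Prop_h4_1}, so that $x_{m+\nu} = x_{\nu+1}$, gives at once
\begin{equation*}
\llangle x_{\nu+1}\lambda^{n}x_{\nu+1}(x_{\nu})^{k} \rrangle_{\Sigma^{\prime}_{\nu}} = A\llangle P_{1,n}(x_{\nu})^{k} \rrangle_{\Sigma^{\prime}_{\nu}} - A^{2}\llangle P_{0,n}(x_{\nu})^{k} \rrangle_{\Sigma^{\prime}_{\nu}},
\end{equation*}
so no further work is needed on this side.

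For the right-hand side I would compute $\llangle x_{\nu}\lambda^{n}x_{\nu+1}(x_{\nu})^{k} \rrangle_{\Sigma^{\prime}_{\nu}}$ from scratch. Applying part f) of the definition of $\llangle \cdot \rrangle_{\Sigma^{\prime}_{\nu}}$ with empty $w'$ and $m = \nu$ expresses it as
\begin{equation*}
-A^{-1}\llangle \lambda P_{0,n}(x_{\nu})^{k} \rrangle_{\Sigma^{\prime}_{\nu}} + 2\llangle P_{-1,n}(x_{\nu})^{k} \rrangle_{\Sigma^{\prime}_{\nu}} + A^{-2}\llangle x_{\nu+1}\lambda^{n}(x_{\nu})^{k+1} \rrangle_{\Sigma^{\prime}_{\nu}}.
\end{equation*}
I would then collapse the last summand by applying part j) followed by part k), turning $\llangle x_{\nu+1}\lambda^{n}(x_{\nu})^{k+1} \rrangle_{\Sigma^{\prime}_{\nu}}$ into $-A^{2}\llangle P_{-1,n}(x_{\nu})^{k} \rrangle_{\Sigma^{\prime}_{\nu}} + A\llangle P_{0,n}(x_{\nu})^{k} \rrangle_{\Sigma^{\prime}_{\nu}}$, and eliminate the $\lambda P_{0,n}$ term using \eqref{eqn:Pnk_1} in the form $\lambda P_{0,n} = A^{-1}P_{1,n} + A P_{-1,n}$. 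After cancelling the $P_{-1,n}$ contributions this yields
\begin{equation*}
\llangle x_{\nu}\lambda^{n}x_{\nu+1}(x_{\nu})^{k} \rrangle_{\Sigma^{\prime}_{\nu}} = -A^{-2}\llangle P_{1,n}(x_{\nu})^{k} \rrangle_{\Sigma^{\prime}_{\nu}} + A^{-1}\llangle P_{0,n}(x_{\nu})^{k} \rrangle_{\Sigma^{\prime}_{\nu}},
\end{equation*}
and multiplying by $-A^{3}$ reproduces exactly the combination obtained for the left-hand side.

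The two reductions therefore agree and \eqref{eqn:V_beta_2_case2b_lem} follows. The only delicate point is the bookkeeping in the right-hand computation: one must track the powers of $A$ and the index shifts in $P_{j,n}$ carefully while alternately applying parts f), j), k) and the recursions \eqref{eqn:Pnk} and \eqref{eqn:Pnk_1}. As a consistency check, one can also expand the left-hand side directly via part f) with $m = \nu+1$ (which produces $P_{-1,n}$, $P_{-2,n}$, and an $x_{\nu+2}\lambda^{n}(x_{\nu})^{k+1}$ term, the latter reduced through part h) and then j), k)) and verify that it collapses to the same expression, independently of Lemma~\ref{lem:V_beta_2_case1}.
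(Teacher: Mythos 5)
Your proof is correct, but it takes a partially different route from the paper, and the difference is worth recording. Both you and the paper reduce the two sides of \eqref{eqn:V_beta_2_case2b_lem} to the common expression $A\llangle P_{1,n}(x_{\nu})^{k} \rrangle_{\Sigma^{\prime}_{\nu}} - A^{2}\llangle P_{0,n}(x_{\nu})^{k} \rrangle_{\Sigma^{\prime}_{\nu}}$, and your treatment of the right-hand side (part f) with $m=\nu$, collapsing $x_{\nu+1}\lambda^{n}(x_{\nu})^{k+1}$ via parts j) and k), eliminating $\lambda P_{0,n}$ via \eqref{eqn:Pnk_1}, and cancelling the $P_{-1,n}$ contributions) is essentially identical to the paper's. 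Where you diverge is the left-hand side: you obtain it in one line by specializing Lemma~\ref{lem:V_beta_2_case1}, i.e.\ taking $m=1$ and $w=(x_{\nu})^{k}$ in \eqref{eqn:Prop_h4_1}, whereas the paper recomputes it from scratch (part f) with $m=\nu+1$, then parts h), j), k) together with \eqref{eqn:Pnk} and \eqref{eqn:Pnk_1}, arriving at its equation \eqref{eqn:V_beta_2_case2b_lem_1}). Your shortcut is legitimate: Lemma~\ref{lem:V_beta_2_case1} is stated and proved before this lemma and its proof nowhere uses it, so there is no circularity; in fact the identity you cite is already established inside the paper's proof of Lemma~\ref{lem:V_beta_2_case1} (it is the case $n'=0$, $m=1$ for $w=\lambda^{0}$, and \eqref{eqn:V_beta_2_case1b_2} for $w=(x_{\nu})^{k}$ with $k\geq 1$), so your argument eliminates a genuine redundancy and is shorter. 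What the paper's version buys in exchange is a proof that is self-contained at the level of the defining relations a)--k), displaying the symmetric way both sides collapse without leaning on the (long) proof of Lemma~\ref{lem:V_beta_2_case1}; your closing remark that the left-hand side can also be expanded directly and checked to agree is precisely that computation.
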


\begin{proof}
By part f) in the definition of $\llangle \cdot \rrangle_{\Sigma^{\prime}_{\nu}}$,
\begin{equation*}
\llangle x_{\nu+1}\lambda^{n}x_{\nu+1}(x_{\nu})^{k} \rrangle_{\Sigma^{\prime}_{\nu}} = -A^{-1} \llangle \lambda P_{-1,n}(x_{\nu})^{k} \rrangle_{\Sigma^{\prime}_{\nu}} + 2 \llangle P_{-2,n}(x_{\nu})^{k} \rrangle_{\Sigma^{\prime}_{\nu}} + A^{-2} \llangle x_{\nu+2}\lambda^{n}(x_{\nu})^{k+1} \rrangle_{\Sigma^{\prime}_{\nu}},
\end{equation*}
and then using \eqref{eqn:Pnk_1} and part h) in the definition of $\llangle \cdot \rrangle_{\Sigma^{\prime}_{\nu}}$,
\begin{eqnarray*}
\llangle x_{\nu+1}\lambda^{n}x_{\nu+1}(x_{\nu})^{k} \rrangle_{\Sigma^{\prime}_{\nu}} 
&=& -A^{-2} \llangle P_{0,n}(x_{\nu})^{k} \rrangle_{\Sigma^{\prime}_{\nu}} + \llangle P_{-2,n}(x_{\nu})^{k} \rrangle_{\Sigma^{\prime}_{\nu}} \\ 
&-& \llangle x_{\nu}\lambda^{n}(x_{\nu})^{k+1} \rrangle_{\Sigma^{\prime}_{\nu}} + A^{-1} \llangle x_{\nu+1}\lambda^{n+1}(x_{\nu})^{k+1} \rrangle_{\Sigma^{\prime}_{\nu}}.
\end{eqnarray*}
Since by part k) and j) in the definition $\llangle \cdot \rrangle_{\Sigma^{\prime}_{\nu}}$, 
\begin{equation*}
-\llangle x_{\nu}\lambda^{n}(x_{\nu})^{k+1} \rrangle_{\Sigma^{\prime}_{\nu}} = -A^{-1} \llangle P_{-1,n}(x_{\nu})^{k} \rrangle_{\Sigma^{\prime}_{\nu}} + A^{-2} \llangle P_{0,n}(x_{\nu})^{k} \rrangle_{\Sigma^{\prime}_{\nu}}
\end{equation*}
and
\begin{equation*}
A^{-1}\llangle x_{\nu+1}\lambda^{n+1}(x_{\nu})^{k+1} \rrangle_{\Sigma^{\prime}_{\nu}} 
= -A^{2}\llangle x_{\nu}\lambda^{n+1}(x_{\nu})^{k+1} \rrangle_{\Sigma^{\prime}_{\nu}}
= -A \llangle P_{-1,n+1}(x_{\nu})^{k} \rrangle_{\Sigma^{\prime}_{\nu}} + \llangle P_{0,n+1}(x_{\nu})^{k} \rrangle_{\Sigma^{\prime}_{\nu}}.
\end{equation*}
Furthermore, using \eqref{eqn:Pnk},
\begin{equation*}
-A \llangle P_{-1,n+1}(x_{\nu})^{k} \rrangle_{\Sigma^{\prime}_{\nu}} = -A^{2} \llangle P_{0,n}(x_{\nu})^{k} \rrangle_{\Sigma^{\prime}_{\nu}} - \llangle P_{-2,n}(x_{\nu})^{k} \rrangle_{\Sigma^{\prime}_{\nu}}
\end{equation*}
and
\begin{equation*}
\llangle P_{0,n+1}(x_{\nu})^{k} \rrangle_{\Sigma^{\prime}_{\nu}} = A \llangle P_{1,n}(x_{\nu})^{k} \rrangle_{\Sigma^{\prime}_{\nu}} + A^{-1} \llangle P_{-1,n}(x_{\nu})^{k} \rrangle_{\Sigma^{\prime}_{\nu}}.
\end{equation*}
Therefore,
\begin{equation}
\label{eqn:V_beta_2_case2b_lem_1}
\llangle x_{\nu+1}\lambda^{n}x_{\nu+1}(x_{\nu})^{k} \rrangle_{\Sigma^{\prime}_{\nu}} = -A^{2} \llangle P_{0,n}(x_{\nu})^{k} \rrangle_{\Sigma^{\prime}_{\nu}} + A \llangle P_{1,n}(x_{\nu})^{k} \rrangle_{\Sigma^{\prime}_{\nu}}.
\end{equation}

Analogously, by part f) in the definition of $\llangle \cdot \rrangle_{\Sigma^{\prime}_{\nu}}$,
\begin{equation*}
-A^{3}\llangle x_{\nu}\lambda^{n}x_{\nu+1}(x_{\nu})^{k} \rrangle_{\Sigma^{\prime}_{\nu}} = A^{2} \llangle \lambda P_{0,n}(x_{\nu})^{k} \rrangle_{\Sigma^{\prime}_{\nu}} - 2A^{3} \llangle P_{-1,n}(x_{\nu})^{k} \rrangle_{\Sigma^{\prime}_{\nu}} - A \llangle x_{\nu+1}\lambda^{n}(x_{\nu})^{k+1} \rrangle_{\Sigma^{\prime}_{\nu}},
\end{equation*}
and then using \eqref{eqn:Pnk_1},
\begin{equation*}
-A^{3}\llangle x_{\nu}\lambda^{n}x_{\nu+1}(x_{\nu})^{k} \rrangle_{\Sigma^{\prime}_{\nu}} = A \llangle P_{1,n}(x_{\nu})^{k} \rrangle_{\Sigma^{\prime}_{\nu}} - A^{3} \llangle P_{-1,n}(x_{\nu})^{k} \rrangle_{\Sigma^{\prime}_{\nu}} - A \llangle x_{\nu+1}\lambda^{n}(x_{\nu})^{k+1} \rrangle_{\Sigma^{\prime}_{\nu}}.
\end{equation*}
Since by part j) and k) in the definition $\llangle \cdot \rrangle_{\Sigma^{\prime}_{\nu}}$, 
\begin{equation*}
- A \llangle x_{\nu+1}\lambda^{n}(x_{\nu})^{k+1} \rrangle_{\Sigma^{\prime}_{\nu}}
= A^{4} \llangle x_{\nu}\lambda^{n}(x_{\nu})^{k+1} \rrangle_{\Sigma^{\prime}_{\nu}}
= A^{3} \llangle P_{-1,n}(x_{\nu})^{k} \rrangle_{\Sigma^{\prime}_{\nu}} - A^{2} \llangle P_{0,n}(x_{\nu})^{k} \rrangle_{\Sigma^{\prime}_{\nu}}.
\end{equation*}
Therefore,
\begin{equation}
\label{eqn:V_beta_2_case2b_lem_2}
-A^{3}\llangle x_{\nu}\lambda^{n}x_{\nu+1}(x_{\nu})^{k} \rrangle_{\Sigma^{\prime}_{\nu}} = -A^{2} \llangle P_{0,n}(x_{\nu})^{k} \rrangle_{\Sigma^{\prime}_{\nu}} + A \llangle P_{1,n}(x_{\nu})^{k} \rrangle_{\Sigma^{\prime}_{\nu}}.
\end{equation}
The equation \eqref{eqn:V_beta_2_case2b_lem} follows from \eqref{eqn:V_beta_2_case2b_lem_1} and \eqref{eqn:V_beta_2_case2b_lem_2}.
\end{proof}

\begin{figure}[ht]
\centering
\includegraphics[scale=0.85]{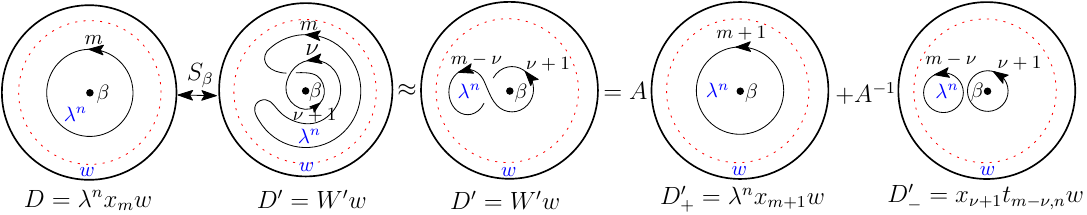}
\caption{Arrow diagrams $D$ and $D'$ in ${\bf D}^{2}_{\beta}$ related by an $S_{\beta}$-move}
\label{fig:V_beta_2_case2}
\end{figure}

For arrow diagrams $D$, $D'$ in Figure~\ref{fig:V_beta_2_case2}, we see that $u = \lambda^{n}x_{m}$, and $W'_{+} = \lambda^{n}x_{m+1}$, $W'_{-} = x_{\nu+1}t_{m-\nu,n}$ are obtained by smoothing crossing of $W'$ according to positive and negative markers. Hence, by \eqref{eqn:gDD'}, $\phi_{\beta}(D-D') = 0$ is equivalent to \eqref{eqn:Prop_h4_2} in the following lemma.

\begin{lemma}
\label{lem:V_beta_2_case2}
For $w\in \Gamma$, $m\in \mathbb{Z}$, and $n\geq 0$,
\begin{equation}
\label{eqn:Prop_h4_2}
\llangle \lambda^{n}x_{m}w - A\lambda^{n}x_{m+1}w - A^{-1}x_{\nu+1}P_{m-\nu,n}w \rrangle_{\Sigma^{\prime}_{\nu}} = 0.
\end{equation}
\end{lemma}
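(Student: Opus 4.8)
The plan is to follow the same strategy as in the proof of Lemma~\ref{lem:V_beta_2_case1}, treating \eqref{eqn:Prop_h4_2} in its equivalent form
\begin{equation*}
\llangle \lambda^{n}x_{m}w \rrangle_{\Sigma^{\prime}_{\nu}} = A\llangle \lambda^{n}x_{m+1}w \rrangle_{\Sigma^{\prime}_{\nu}} + A^{-1}\llangle x_{\nu+1}P_{m-\nu,n}w \rrangle_{\Sigma^{\prime}_{\nu}}.
\end{equation*}
First, repeating the reduction used at the beginning of the proof of Lemma~\ref{lem:loc_prop_bracket_c}, after applying the relevant parts of the definition of $\llangle \cdot \rrangle_{\Sigma^{\prime}_{\nu}}$ sufficiently many times one may assume that $w = \lambda^{n'}$ or $w = \lambda^{n'}x_{m'}(x_{\nu})^{k'}$, since every $w \in \Gamma$ can be brought into one of these two shapes.

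Second, I would invoke Lemma~\ref{lem:annulus_for_any_kn_bracket_2_beta}. Expanding $P_{m-\nu,n}$ by \eqref{eqn:Pnk} and $\lambda^{n}x_{m}$ by the binomial form of part (g) rewrites each of the three brackets as a sum of the $\Theta_{t}^{-}$- and $\Theta_{x}^{\pm}$-type terms of that lemma, in which the trailing exponent $n'$ plays the role of $k$, the index $m$ plays the role of $n$, and the fixed exponent $n$ is absorbed into the shift parameters $v$. Lemma~\ref{lem:annulus_for_any_kn_bracket_2_beta} then reduces the identity, for each fixed $n$, to the base values $n' = 0$ with $m \in \{\nu,\nu+1\}$ in the first family, and $n' = 0$ with $(m,m') \in \{\nu,\nu+1\}^{2}$ in the second family---precisely the finite list on which both $x_{m}$ and the first index of $P_{m-\nu,n}$ already sit at the bottom of their recursions.

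Third, I would settle these finitely many base cases by direct computation, pushing every occurrence of $x_{\nu+1}$ toward the $\Sigma^{\prime}_{\nu}$-normal forms $x_{\nu}\lambda^{n}$ and $\lambda^{n}$ by means of parts (f), (j), (k) of the definition of $\llangle \cdot \rrangle_{\Sigma^{\prime}_{\nu}}$ together with the recursions \eqref{eqn:Pn}, \eqref{eqn:Pnk}, \eqref{eqn:Pnk_1}; where two diagrams need to be identified I would use the $\Omega$-move invariance recorded in Lemma~\ref{lem:lemma_for_h4}, exactly as in the $(0,\nu+1)$ and $(1,\nu+1)$ subcases of Lemma~\ref{lem:V_beta_2_case1}. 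The decisive simplification, however, comes from the collapse \eqref{eqn:V_beta_2_case2b_lem} of Lemma~\ref{lem:V_beta_2_case2b_lem}.

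The main obstacle is the base case $(m,m') = (\nu+1,\nu+1)$, and to a lesser degree the cases with $m = \nu+1$ or $m' = \nu+1$: there the smoothings place two $x_{\nu+1}$-curves adjacent to one another, so a single application of part (f) generates a cascade of $P_{\cdot,n}$- and $P_{\cdot,n+1}$-terms on both sides. This is exactly the configuration that \eqref{eqn:V_beta_2_case2b_lem} is designed to resolve---it converts $x_{\nu+1}\lambda^{n}x_{\nu+1}(x_{\nu})^{k}$ into $-A^{3}x_{\nu}\lambda^{n}x_{\nu+1}(x_{\nu})^{k}$---so the genuine effort is the bookkeeping required to match the two sides after repeated use of \eqref{eqn:Pnk} and \eqref{eqn:Pnk_1} and the cancellation of the alike $P_{0,n}$, $P_{1,n}$, $P_{-1,n}$, and $P_{2,n}$ contributions, entirely parallel to the longest computation \eqref{eqn:V_beta_2_case1b_4} in the proof of Lemma~\ref{lem:V_beta_2_case1}.
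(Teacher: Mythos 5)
Your high-level skeleton (tail reduction as in Lemma~\ref{lem:loc_prop_bracket_c}, an appeal to Lemma~\ref{lem:annulus_for_any_kn_bracket_2_beta}, and Lemma~\ref{lem:V_beta_2_case2b_lem} for the adjacent-$x_{\nu+1}$ configuration) is the right one, but the step where you pin $m$ and $m'$ to $\{\nu,\nu+1\}$ opens a gap that your toolkit cannot close. Lemma~\ref{lem:annulus_for_any_kn_bracket_2_beta} never produces a finite list of base cases: its hypothesis (1) demands the identity at $k=0$ for \emph{all} $n\in\mathbb{Z}$, and its hypothesis (2) demands it at two consecutive values of $n$ for \emph{all} $k\geq 0$. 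So forcing $m$ into $\{\nu,\nu+1\}$ (hypothesis (2), with the lemma's $k$ playing the role of the exponent $n$ of $\lambda^{n}$ and of the second index of $P_{m-\nu,n}$) obliges you to verify, for \emph{every} $n\geq 0$, identities such as $\llangle \lambda^{n}x_{\nu}w\rrangle_{\Sigma^{\prime}_{\nu}}=A\llangle \lambda^{n}x_{\nu+1}w\rrangle_{\Sigma^{\prime}_{\nu}}+A^{-1}\llangle x_{\nu+1}P_{0,n}w\rrangle_{\Sigma^{\prime}_{\nu}}$. These are \emph{not} parallel to the base cases of Lemma~\ref{lem:V_beta_2_case1}: there the $\lambda^{n}$-block sits inside the scalar polynomial $P_{m,n}$ or between two $x$-curves (as in $x_{\nu+1}\lambda^{n}x_{m+\nu}$), where parts f), j), k) apply directly; here $\lambda^{n}$ heads the word $\lambda^{n}x_{m}w$, and the only rule that touches a leading $\lambda$ is part g), which shifts $m$ by $\pm 1$ and immediately exits $\{\nu,\nu+1\}$. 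Concretely, if you induct on $n$ at $m=\nu$: applying part g) and \eqref{eqn:Pnk} at level $n+1$ and subtracting the $m=\nu+1$ identity at level $n$ leaves exactly $A^{-1}$ times the identity at $m=\nu-1$, level $n$ --- outside your base set. The induction never closes, and the only way out is to prove the identity at $n=0$ for all $m$, at which point the finite-$m$ reduction you performed is superfluous.

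That all-$m$ statement is precisely how the paper proceeds, by making the opposite choice of which parameter stays free. It reduces the tail to $\lambda^{n'}(x_{\nu+1})^{\varepsilon}(x_{\nu})^{k'}$, uses Lemma~\ref{lem:annulus_for_any_kn_bracket_2_beta} only to set $n=n'=0$, and then verifies \eqref{eqn:V_beta_2_case2} for all $m\in\mathbb{Z}$ in one stroke: $x_{m}$ and $x_{m+1}$ are expanded by \eqref{eqn:rel_xm_bracket_2_beta_1}, $P_{m-\nu}$ by \eqref{eqn:rel_Pn}, and the conversion between $x_{\nu}$-headed and $x_{\nu+1}$-headed words is carried out by part j) when $\varepsilon=0$ and by Lemma~\ref{lem:V_beta_2_case2b_lem} when $\varepsilon=1$; both sides then collapse to the same combination of two brackets. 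The $Q$-polynomial identities --- Lemma~\ref{lem:rel_xm_bracket_2_beta} together with \eqref{eqn:rel_Pn} --- are the actual engine of the proof, and they appear nowhere in your plan; without them (or without the all-$m$ statement they deliver) your base cases cannot be settled, so the proposal as written cannot be completed.
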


\begin{proof}
To prove \eqref{eqn:Prop_h4_2}, it suffices to show that for any fixed $m$,
\begin{equation}
\label{eqn:V_beta_2_case2}
\llangle \lambda^{n}x_{m}w \rrangle_{\Sigma^{\prime}_{\nu}} = A\llangle \lambda^{n}x_{m+1}w \rrangle_{\Sigma^{\prime}_{\nu}} + A^{-1}\llangle x_{\nu+1}P_{m-\nu,n}w \rrangle_{\Sigma^{\prime}_{\nu}}.
\end{equation}
We may assume that $w = \lambda^{n'}(x_{\nu+1})^{\varepsilon}(x_{\nu})^{k'}$ with $\varepsilon = 0$ or $1$ by using similar arguments as at the beginning of our proof for Lemma~\ref{lem:loc_prop_bracket_c}. By Lemma~\ref{lem:annulus_for_any_kn_bracket_2_beta}, it suffices to show the case $n = n' = 0$. 

When $\varepsilon = 0$, \eqref{eqn:V_beta_2_case2} becomes 
\begin{equation}
\label{eqn:V_beta_2_case2a}
\llangle x_{m}(x_{\nu})^{k'} \rrangle_{\Sigma^{\prime}_{\nu}} = A\llangle x_{m+1}(x_{\nu})^{k'} \rrangle_{\Sigma^{\prime}_{\nu}} + A^{-1}\llangle x_{\nu+1}P_{m-\nu}(x_{\nu})^{k'} \rrangle_{\Sigma^{\prime}_{\nu}}.
\end{equation}
By \eqref{eqn:rel_xm_bracket_2_beta_1} and then by part j) in the definition of $\llangle \cdot \rrangle_{\Sigma^{\prime}_{\nu}}$,
\begin{eqnarray*}
\llangle x_{m}(x_{\nu})^{k'} \rrangle_{\Sigma^{\prime}_{\nu}} &=& -A^{m-\nu}\llangle x_{\nu}Q_{m-\nu-1}(x_{\nu})^{k'} \rrangle_{\Sigma^{\prime}_{\nu}} + A^{m-\nu-1}\llangle x_{\nu+1}Q_{m-\nu}(x_{\nu})^{k'} \rrangle_{\Sigma^{\prime}_{\nu}} \\
&=& A^{m-\nu-3}\llangle x_{\nu+1}Q_{m-\nu-1}(x_{\nu})^{k'} \rrangle_{\Sigma^{\prime}_{\nu}} - A^{m-\nu+2}\llangle x_{\nu}Q_{m-\nu}(x_{\nu})^{k'} \rrangle_{\Sigma^{\prime}_{\nu}}.
\end{eqnarray*}
Moreover, by \eqref{eqn:rel_xm_bracket_2_beta_1} and \eqref{eqn:rel_Pn}, respectively, we see that
\begin{equation*}
A\llangle x_{m+1}(x_{\nu})^{k'} \rrangle_{\Sigma^{\prime}_{\nu}} = -A^{m-\nu+2}\llangle x_{\nu}Q_{m-\nu}(x_{\nu})^{k'} \rrangle_{\Sigma^{\prime}_{\nu}} + A^{m-\nu+1}\llangle x_{\nu+1}Q_{m-\nu+1}(x_{\nu})^{k'} \rrangle_{\Sigma^{\prime}_{\nu}}
\end{equation*}
and
\begin{equation*}
A^{-1}\llangle x_{\nu+1}P_{m-\nu}(x_{\nu})^{k'} \rrangle_{\Sigma^{\prime}_{\nu}} = -A^{m-\nu+1}\llangle x_{\nu+1}Q_{m-\nu+1}(x_{\nu})^{k'} \rrangle_{\Sigma^{\prime}_{\nu}} + A^{m-\nu-3}\llangle x_{\nu+1}Q_{m-\nu-1}(x_{\nu})^{k'} \rrangle_{\Sigma^{\prime}_{\nu}}.
\end{equation*}
Therefore, \eqref{eqn:V_beta_2_case2a} follows from the above equations.

When $\varepsilon = 1$, \eqref{eqn:V_beta_2_case2} becomes 
\begin{equation}
\label{eqn:V_beta_2_case2b}
\llangle x_{m}x_{\nu+1}(x_{\nu})^{k'} \rrangle_{\Sigma^{\prime}_{\nu}} = A\llangle x_{m+1}x_{\nu+1}(x_{\nu})^{k'} \rrangle_{\Sigma^{\prime}_{\nu}} + A^{-1}\llangle x_{\nu+1}P_{m-\nu}x_{\nu+1}(x_{\nu})^{k'} \rrangle_{\Sigma^{\prime}_{\nu}}.
\end{equation}
By \eqref{eqn:rel_xm_bracket_2_beta_1} and then by using Lemma~\ref{lem:V_beta_2_case2b_lem},
\begin{eqnarray*}
\llangle x_{m}x_{\nu+1}(x_{\nu})^{k'} \rrangle_{\Sigma^{\prime}_{\nu}} &=& -A^{m-\nu} \llangle x_{\nu}Q_{m-\nu-1}x_{\nu+1}(x_{\nu})^{k'} \rrangle_{\Sigma^{\prime}_{\nu}} + A^{m-\nu-1} \llangle x_{\nu+1}Q_{m-\nu}x_{\nu+1}(x_{\nu})^{k'} \rrangle_{\Sigma^{\prime}_{\nu}} \\
&=& A^{m-\nu-3} \llangle x_{\nu+1}Q_{m-\nu-1}x_{\nu+1}(x_{\nu})^{k'} \rrangle_{\Sigma^{\prime}_{\nu}} - A^{m-\nu+2} \llangle x_{\nu}Q_{m-\nu}x_{\nu+1}(x_{\nu})^{k'} \rrangle_{\Sigma^{\prime}_{\nu}}.
\end{eqnarray*}
Moreover, by \eqref{eqn:rel_xm_bracket_2_beta_1} and \eqref{eqn:rel_Pn}, respectively, we see that
\begin{equation*}
A\llangle x_{m+1}x_{\nu+1}(x_{\nu})^{k'} \rrangle_{\Sigma^{\prime}_{\nu}} = -A^{m-\nu+2} \llangle x_{\nu}Q_{m-\nu}x_{\nu+1}(x_{\nu})^{k'} \rrangle_{\Sigma^{\prime}_{\nu}} + A^{m-\nu+1} \llangle x_{\nu+1}Q_{m-\nu+1}x_{\nu+1}(x_{\nu})^{k'} \rrangle_{\Sigma^{\prime}_{\nu}}
\end{equation*}
and
\begin{eqnarray*}
A^{-1}\llangle x_{\nu+1}P_{m-\nu}x_{\nu+1}(x_{\nu})^{k'} \rrangle_{\Sigma^{\prime}_{\nu}} 
&=& -A^{m-\nu+1} \llangle x_{\nu+1}Q_{m-\nu+1}x_{\nu+1}(x_{\nu})^{k'} \rrangle_{\Sigma^{\prime}_{\nu}} \\
&+& A^{m-\nu-3} \llangle x_{\nu+1}Q_{m-\nu-1}x_{\nu+1}(x_{\nu})^{k'} \rrangle_{\Sigma^{\prime}_{\nu}}.
\end{eqnarray*}
Therefore, \eqref{eqn:V_beta_2_case2b} follows from the above equations.
\end{proof}

\begin{figure}[ht]
\centering
\includegraphics[scale=0.8]{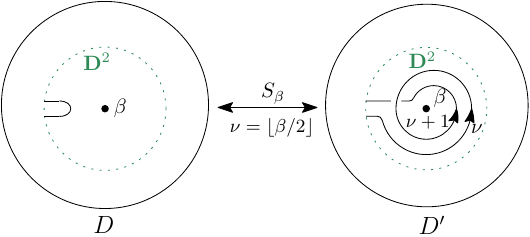}
\caption{$D$ and $D'$ in on ${\bf D}^{2}_{\beta}$ related by $S_{\beta}$-move.}
\label{fig:DiagramsBySBetaMove}
\end{figure}

\begin{lemma}
\label{lem:Main_Lemma_For_VB2}
If arrow diagrams $D$ and $D'$ in ${\bf D}^{2}_{\beta}$ are related by $S_{\beta}$-move then
\begin{equation*}
\phi_{\beta}(D-D') = 0.
\end{equation*}
\end{lemma}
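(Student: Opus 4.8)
The plan is to prove Lemma~\ref{lem:Main_Lemma_For_VB2} by the same strategy used for the $\Omega_5$-invariance in Lemma~\ref{lem:main_lemma_for_h}: resolve all crossings lying away from the region where the $S_{\beta}$-move is supported, thereby reducing a general $S_{\beta}$-move to the two elementary slides already treated in Lemmas~\ref{lem:V_beta_2_case1} and \ref{lem:V_beta_2_case2}, and then propagate the local vanishing through the surrounding crossingless words by multiplicativity of $\llangle\cdot\rrangle_{\Gamma}$ and $\llangle\cdot\rrangle_{\Sigma^{\prime}_{\nu}}$. Concretely, I would first fix the normal cylinder attached along the $(2,\beta)$-curve near the marked point where $D$ and $D'$ differ, and note that outside this cylinder the two diagrams coincide and share the same crossings $c_{1},\dots,c_{k}$. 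Grouping the state sum defining $\llangle\cdot\rrangle$ by first smoothing these $k$ common crossings according to a Kauffman state $s\in\{-1,1\}^{k}$, I get crossingless-away-from-the-handle diagrams $D_{s}$ and $D'_{s}$ that still differ by the very same $S_{\beta}$-move, and
\[
\llangle D - D' \rrangle = \sum_{s} A^{p(s)-n(s)} \langle D_{s} - D'_{s} \rangle,
\]
so it suffices to prove $\llangle \llangle D_{s} - D'_{s} \rrangle_{\Gamma} \rrangle_{\Sigma^{\prime}_{\nu}} = 0$ for each $s$.

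Next I would classify the local slid configuration. Because the $2$-handle is attached to the \emph{inner} boundary torus, the slide sits at the innermost position, so in each $D_{s}$ the piece entering the handle region is an innermost crossingless arc; after normalizing by $\Omega_{1}$--$\Omega_{5}$-moves (permissible by Lemma~\ref{lem:lemma_for_h4}) it becomes one of the two standard forms $u\in\{t_{m,n},\lambda^{n}x_{m}\}$, with the remaining part $w\in\Gamma$ lying outside it, i.e.\ $D_{s}=uw$ and $D'_{s}=W'w$ in the notation of \eqref{eqn:gDD'}. That identity then gives
\[
\phi_{\beta}(D_{s}-D'_{s}) = \llangle \llangle (u - A W'_{+} - A^{-1} W'_{-})\,w \rrangle_{\Gamma} \rrangle_{\Sigma^{\prime}_{\nu}},
\]
which is exactly the left-hand side of \eqref{eqn:Prop_h4_1} when $u=t_{m,n}$ and of \eqref{eqn:Prop_h4_2} when $u=\lambda^{n}x_{m}$, hence zero by Lemmas~\ref{lem:V_beta_2_case1} and \ref{lem:V_beta_2_case2}. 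Summing over $s$, and using that $\llangle\cdot\rrangle_{\Gamma}$ and $\llangle\cdot\rrangle_{\Sigma^{\prime}_{\nu}}$ factor through the surrounding words $w_{1}(s),w_{2}(s)$ exactly as in the triple-sum computation closing the proof of Lemma~\ref{lem:main_lemma_for_h}, yields $\phi_{\beta}(D-D')=0$.

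The hard part will be the classification step: verifying that once the off-handle crossings are smoothed, the innermost slid configuration is always, up to $\Omega$-moves, one of the two models $t_{m,n}$ or $\lambda^{n}x_{m}$, and that no part of the link sits between the slid arc and the marked point, so that the factorization $D_{s}=uw$ with $u$ innermost genuinely holds and \eqref{eqn:gDD'} applies verbatim. This is where the precise geometry of Figure~\ref{fig:DiagramsBySBetaMove} must be matched against the two elementary pictures of Lemmas~\ref{lem:V_beta_2_case1} and \ref{lem:V_beta_2_case2}; once this normal form is secured, the rest is the bookkeeping already packaged in the state sum and the two special-case lemmas.
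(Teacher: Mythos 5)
Your proposal follows essentially the same route as the paper's proof: a Kauffman state sum over the crossings common to $D$ and $D'$, reduction of each smoothed configuration to the two model slides of Lemmas~\ref{lem:V_beta_2_case1} and \ref{lem:V_beta_2_case2} via \eqref{eqn:gDD'}, and propagation through the surrounding words by linearity. The only real difference is that the paper bypasses your ``hard'' normalization step by expanding the inner crossingless part as $\langle D_{1,s}\rangle_{r}=\sum_{i}r_{1,i}\lambda^{i}$ and the outer part as $\llangle\llangle D_{2,s}\rrangle\rrangle_{\Gamma}=\sum_{j}r_{2,j}w_{j}(s)$, so that each resulting term is literally of the form required by \eqref{eqn:Prop_h4_1} or \eqref{eqn:Prop_h4_2}, with no separate $\Omega$-move classification needed.
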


\begin{proof}
Let $D$ and $D'$ be arrow diagrams in ${\bf D}^{2}_{\beta}$ related by $S_{\beta}$-move on a $2$-disk ${\bf D}^{2}$ centered at $\beta$ (see Figure~\ref{fig:DiagramsBySBetaMove}). We denote by $\mathcal{K}(D)$ and $\mathcal{K}(D')$ their corresponding sets of Kauffman states. As shown in Figure~\ref{fig:CasesDiagramsBySBetaMove} Kauffman states $s\in \mathcal{K}(D)$ are in bijection with pairs of Kauffman states $s_{+},s_{-}\in \mathcal{K}(D')$. Moreover, $s$ and $s_{+},s_{-}$ are related as follows
\begin{equation*}
p(s_{+})-n(s_{+}) = p(s)-n(s)+1 \quad \text{and} \quad p(s_{-})-n(s_{-}) = p(s)-n(s)-1,
\end{equation*}
and we denote by $D_{s}$, $D_{s_{+}}$, and $D_{s_{-}}$ the arrow diagrams corresponding $s$ and $s_{+},s_{-}$, respectively. Then,
\begin{equation*}
\llangle D-D'\rrangle = \sum_{s\in \mathcal{K}(D)}A^{p(s)-n(s)}\langle D_{s} - A D'_{s+} - A^{-1} D'_{s-}\rangle. 
\end{equation*}

\begin{figure}[ht]
\centering
\includegraphics[scale=0.8]{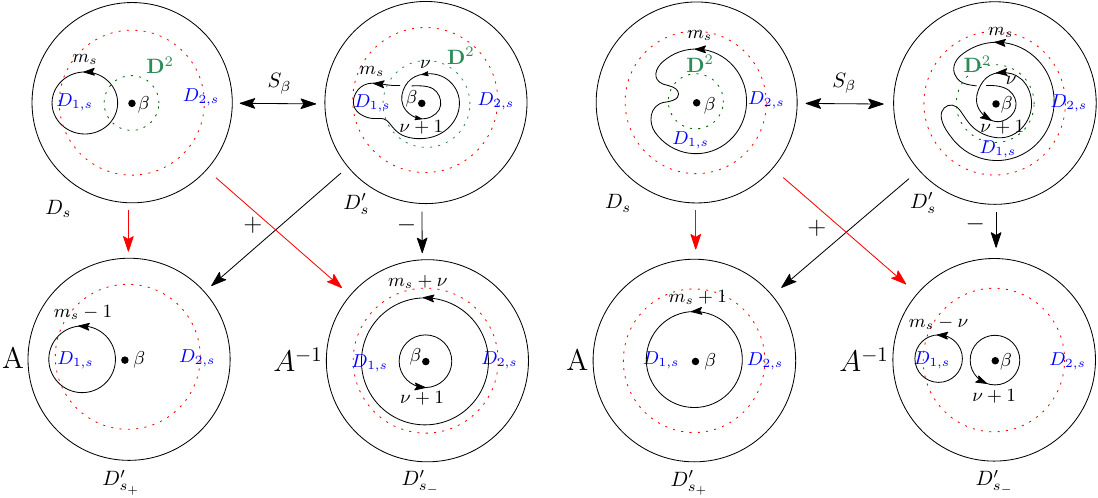}
\caption{$D_{s}$ and $D'_{s}$ related by an $S_{\beta}$-move on ${\bf D}^{2}_{\beta}$}
\label{fig:CasesDiagramsBySBetaMove}
\end{figure}

For $D_{1,s}$ and $D_{2,s}$ in Figure~\ref{fig:CasesDiagramsBySBetaMove}, suppose that
\begin{equation*}
\langle D_{1,s} \rangle_{r} = \sum_{i=0}^{n_{1}}r_{1,i}\lambda^{i} \quad\text{and}\quad \llangle \llangle D_{2,s} \rrangle \rrangle_{\Gamma} =  \sum_{j = 0}^{n_{2}}r_{2,j} w_{j}(s),    
\end{equation*}
then for the arrow diagrams in the left of Figure~\ref{fig:CasesDiagramsBySBetaMove},
\begin{equation*}
\llangle D_{s} - A D'_{s_{+}} - A^{-1} D'_{s_{-}} \rrangle_{\Gamma} = \sum_{i = 0}^{n_{1}} \sum_{j = 0}^{n_{2}} r_{1,i} r_{2,j} (P_{m_{s},i} - AP_{m_{s}-1,i}-A^{-1}x_{\nu+1}\lambda^{i}x_{m_{s}+\nu}) w_{j}(s)
\end{equation*}
and for the arrow diagrams in the right of Figure~\ref{fig:CasesDiagramsBySBetaMove}
\begin{equation*}
\llangle D_{s} - A D'_{s_{+}} - A^{-1} D'_{s_{-}} \rrangle_{\Gamma} = \sum_{i = 0}^{n_{1}} \sum_{j = 0}^{n_{2}} r_{1,i} r_{2,j} (\lambda^{i}x_{m_{s}} - A\lambda^{i}x_{m_{s}+1}-A^{-1}x_{\nu+1}P_{m_{s}-\nu,i}) w_{j}(s).
\end{equation*}
Therefore, it suffices to show that
\begin{eqnarray*}
&&\llangle P_{m_{s},i}w_{j}(s) - AP_{m_{s}-1,i}w_{j}(s)-A^{-1}x_{\nu+1}\lambda^{i}x_{m_{s}+\nu}w_{j}(s) \rrangle_{\Sigma^{\prime}_{\nu}} = 0 \quad \text{and} \\
&&\llangle \lambda^{i}x_{m_{s}}w_{j}(s) - A\lambda^{i}x_{m_{s}+1}w_{j}(s)-A^{-1}x_{\nu+1}P_{m_{s}-\nu,i}w_{j}(s) \rrangle_{\Sigma^{\prime}_{\nu}} = 0.
\end{eqnarray*}
However, the above identities follow from \eqref{eqn:Prop_h4_1} and \eqref{eqn:Prop_h4_2}, respectively.
\end{proof}

\begin{theorem}
\label{thm:basis_for_V(beta,2)}    
KBSM of $V(\beta,2)$ is a free $R$-module with basis consisting ambient isotopy classes of generic framed links $V(\beta,2)$ whose arrow diagrams are in $\Sigma^{\prime}_{\nu}$, i.e.,
\begin{equation*}
\mathcal{S}_{2,\infty}(V(\beta,2);R,A) \cong R\Sigma^{\prime}_{\nu}.
\end{equation*}
\end{theorem}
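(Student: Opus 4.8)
The plan is to repeat verbatim the scheme used for Theorem~\ref{thm:basis_Sigma_c_for_A2S1}, now with $\phi_{\beta}$ in place of $\psi_{c}$ and $S\mathcal{D}({\bf D}^{2}_{\beta})$ in place of $S\mathcal{D}({\bf A}^{2})$. Since $\Sigma^{\prime}_{\nu}\subset \mathcal{D}({\bf D}^{2}_{\beta})$, I would first define the homomorphism $\varphi:R\Sigma^{\prime}_{\nu}\to S\mathcal{D}({\bf D}^{2}_{\beta})$ by $\varphi(w)=[w]$, the class modulo $S_{2,\infty}({\bf D}^{2}_{\beta})$, and then exhibit $\phi_{\beta}$ (after descending to the quotient) as its two-sided inverse. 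The whole argument is a formal diagram chase; all the analytic content has already been placed in Lemma~\ref{lem:lemma_for_h4} and Lemma~\ref{lem:Main_Lemma_For_VB2}.

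To see that $\varphi$ is surjective, I would note that $S\mathcal{D}({\bf D}^{2}_{\beta})$ is generated by the classes $[D]$ of arrow diagrams $D$, so it suffices to check $\varphi(\phi_{\beta}(D))=[\phi_{\beta}(D)]=[D]$ for every such $D$. This holds because each of the three stages defining $\phi_{\beta}=\llangle\llangle\llangle\cdot\rrangle\rrangle_{\Gamma}\rrangle_{\Sigma^{\prime}_{\nu}}$ is the identity modulo $S_{2,\infty}({\bf D}^{2}_{\beta})$: the bracket $\llangle\cdot\rrangle$ only applies the Kauffman skein relation and removes trivial circles; $\llangle\cdot\rrangle_{\Gamma}$ applies the disk bracket $\langle\cdot\rangle_{r}$, which is a base change inside $S\mathcal{D}({\bf D}^{2})$; and $\llangle\cdot\rrangle_{\Sigma^{\prime}_{\nu}}$ only invokes the relations of Lemma~\ref{lem:Gamma_c} (parts (c)--(i)) together with the two new rules (j) and (k), each of which is a genuine identity in $S\mathcal{D}({\bf D}^{2}_{\beta})$ by Lemma~\ref{lem:Sigma_2_beta}. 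Composing, $[\phi_{\beta}(D)]=[D]$, so $\varphi$ is onto.

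For injectivity I would use that $\phi_{\beta}$ annihilates every skein relation and, by Lemma~\ref{lem:lemma_for_h4} and Lemma~\ref{lem:Main_Lemma_For_VB2}, every difference $D-D'$ of diagrams related by $\Omega_{1}-\Omega_{5}$ or $S_{\beta}$-moves. By Theorem~\ref{thm:AmbientIsotopiesInVBeta2} these are precisely the defining relations of $S\mathcal{D}({\bf D}^{2}_{\beta})$, so $\phi_{\beta}$ descends to a surjection $\hat{\phi}_{\beta}:S\mathcal{D}({\bf D}^{2}_{\beta})\to R\Sigma^{\prime}_{\nu}$ with $\hat{\phi}_{\beta}([D])=\phi_{\beta}(D)$. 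Because $\phi_{\beta}(w)=w$ for every $w\in\Sigma^{\prime}_{\nu}$ by part (b) of its definition, I obtain $(\hat{\phi}_{\beta}\circ\varphi)(w)=\hat{\phi}_{\beta}([w])=\phi_{\beta}(w)=w$, i.e. $\hat{\phi}_{\beta}\circ\varphi=\mathrm{id}$, which forces $\varphi$ to be injective. Together with the surjectivity above, $\varphi$ is an isomorphism with inverse $\hat{\phi}_{\beta}$. Finally, invoking Theorem~\ref{thm:AmbientIsotopiesInVBeta2} once more to identify $\mathcal{S}_{2,\infty}(V(\beta,2);R,A)$ with $S\mathcal{D}({\bf D}^{2}_{\beta})$ yields $\mathcal{S}_{2,\infty}(V(\beta,2);R,A)\cong R\Sigma^{\prime}_{\nu}$.

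I expect no serious obstacle in the theorem proper, since the difficulty lives entirely in the preceding lemmas. The only points demanding care are the two just flagged: that each reduction rule of $\llangle\cdot\rrangle_{\Sigma^{\prime}_{\nu}}$ (especially the $S_{\beta}$-derived rules (j) and (k)) represents a true identity in $S\mathcal{D}({\bf D}^{2}_{\beta})$ rather than a mere formal rewriting, which Lemma~\ref{lem:Sigma_2_beta} guarantees; and that $\hat{\phi}_{\beta}\circ\varphi=\mathrm{id}$ supplies only a one-sided inverse, so the independently established surjectivity of $\varphi$ must be used to upgrade it to a bijection.
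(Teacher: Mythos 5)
Your proposal is correct and follows essentially the same route as the paper: the paper's proof simply invokes the argument of Theorem~\ref{thm:basis_Sigma_c_for_A2S1} verbatim, using Lemma~\ref{lem:lemma_for_h4} and Lemma~\ref{lem:Main_Lemma_For_VB2} to descend $\phi_{\beta}$ to $\hat{\phi}_{\beta}$ and then applying Theorem~\ref{thm:AmbientIsotopiesInVBeta2}, which is exactly the diagram chase you spell out (surjectivity of $\varphi$ via $[\phi_{\beta}(D)]=[D]$, injectivity via the one-sided identity $\hat{\phi}_{\beta}\circ\varphi=\mathrm{id}$). Your two flagged points of care, in particular that rules (j) and (k) are genuine identities in $S\mathcal{D}({\bf D}^{2}_{\beta})$ by Lemma~\ref{lem:Sigma_2_beta}, are precisely the content the paper relies on implicitly.
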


\begin{proof}
The statement follows by arguments analogous to those in our proof of Theorem~\ref{thm:basis_Sigma_c_for_A2S1}. Specifically, we notice that by Lemma~\ref{lem:lemma_for_h4} and Lemma~\ref{lem:Main_Lemma_For_VB2} the map $\phi_{\beta}$ yields a well-defined homomorphism of free $R$-modules
\begin{equation*}
\phi_{\beta}: R\mathcal{D}({\bf D}^{2}_{\beta}) \to  R\Sigma^{\prime}_{\nu},
\end{equation*}
which descends to an isomorphism of free $R$-modules
\begin{equation*}
\hat{\phi}_{\beta}: S\mathcal{D}({\bf D}^{2}_{\beta}) \to  R\Sigma^{\prime}_{\nu},\, \hat{\phi}_{\beta}([D]) = \phi_{\beta}(D) 
\end{equation*}
and then we apply Theorem~\ref{thm:AmbientIsotopiesInVBeta2}.
\end{proof}

\begin{remark}
\label{rem:Final_Rem_Fib_Torus}
Fibered torus $V(\beta,2)$ is homeomorphic to solid torus $V^{3}$ and a basis for the KBSM of $V^{3}$ is well-known (see \cite{Prz1999}). Therefore, in Theorem~\ref{thm:basis_for_V(beta,2)} we just constructed a family of new basis for $V^{3}$. In follow-up papers, we will use the basis $\Sigma^{\prime}_{\nu}$ to compute the KBSM of certain families of small Seifert fibered $3$-manifolds.
\end{remark}

\section*{Acknowledgement}  Authors would like to thank Professor J\'{o}zef H. Przytycki for all valuable discussions and suggestions.

\bibliography{mybibfile}

\end{document}